\documentclass[12pt]{article}
 \usepackage{amsmath}
 \usepackage{blkarray}
  \usepackage{amsfonts}
\usepackage{tikz}
\usepackage[title]{appendix}
\usepackage{subfig}
\usepackage{array}
\usepackage{tabu}
\usepackage{color} 
\usepackage[
  separate-uncertainty = true,
  multi-part-units = repeat
]{siunitx}

\usetikzlibrary{arrows}
\usepackage{epsfig,}
\usepackage{epsfig}
\tikzset{
    vertex/.style = {
        circle,
        draw,
        outer sep = 3pt,
        inner sep = 3pt,
    },edge/.style = {->,> = latex'}
}
\bibliographystyle{elsarticle-num}
\usepackage{amssymb}
\usepackage{enumitem}
\usepackage{amsthm}
\usepackage{dsfont}
\topmargin0cm
\oddsidemargin1cm
\evensidemargin1cm
\textheight 8.5in
\textwidth 6in

\newcommand{\rr}{\mathbb{R}}
\newcommand{\1}{\mathbf{1}}
\newcommand{\0}{\mathbf{0}}


\newcommand{\El}{\mathrm{El}}

\def\det{{\rm det}}





\newtheorem{theorem}{Theorem}

\newtheorem{corollary}{Corollary}

\newtheorem{example}{Example}

\newtheorem{lemma}{Lemma}
\newtheorem{definition}{Definition}

\begin{document}
\begin{center}
\begin{large}
 Distance matrix of enhanced power graphs of finite groups
\end{large}
\end{center}
\begin{center}
Anita Arora, Hiranya Kishore Dey, Shivani Goel\\
\today
\end{center}

\begin{abstract}
The enhanced power graph of a group $G$ is the graph 
$\mathcal{G}_E(G)$  with vertex set $G$ and edge set $ \{(u,v): u, v \in \langle w \rangle,~\mbox{for some}~ w \in G\}$. In this paper, we compute the spectrum of the distance matrix of the enhanced power graph of non-abelian groups of order $pq$, dihedral groups, dicyclic groups, elementary abelian groups $\El(p^n)$ and the non-cyclic abelian groups $\El(p^n)\times \El(q^m)$ and $\El(p^n)\times \mathbb{Z}_m$, where $p$ and $q$ are distinct primes. 

For the non-cyclic abelian group $\El(p^n)\times \El(q^m)$, we also compute the spectrum of the adjacency matrix of its enhanced power graph and the spectrum of the adjacency and the distance matrix of its power graph.
\end{abstract} 

{\bf Keywords.} Enhanced power graphs, power graphs, distance matrix, finite groups, spectrum\\

{\bf AMS CLASSIFICATION.} 05C50, 05C25, 20K01


\section{Introduction}

The study of graphs associated with various algebraic structures has been a topic of increasing interest during the last couple of decades. Cayley graphs are one of the well-known classes of graphs arising from algebraic structures. Many different types of graphs, such as power graph \cite{directedgrphcompropofsemgrpkq3,combinatorialpropertyandpowergraphsofgroupskq1} and enhanced power graph \cite{AalipourcameronEJC} of a group has been introduced to explore the properties of algebraic structures using graph theory. It is important to notice that these graphs represent some combinatorial properties of the underlying group. Research on combinatorial properties of groups and semigroups originates from Bernhard Neumann’s theorem \cite{Neumann1976APO} answering a question of Paul Erd\'os. In \cite{David}, a brief theory of applications of combinatorial properties of groups is given. We are interested in studying power graphs and enhanced power graphs as they represent some combinatorial properties of groups. Also, these graphs help us to
characterize the algebraic structures with isomorphic graphs and realize 
the interdependence between the algebraic structures and the corresponding graphs. 
The concept of power graph was introduced in the context of semigroup theory by Kelarev and Quin \cite{combinatorialpropertyandpowergraphsofgroupskq1}.

\begin{definition}\label{defn: powr graph}
The \emph{power graph} $\mathcal{P}(G)$ of a group $G$ is a simple graph with vertex set $G$ and two vertices $a$ and $b$ are adjacent if and only if one of them is some power of the other, i.e., either $b^k = a$ or $a^k = b$, for some $k \in \mathbb{N}$. 
\end{definition} 

Another closely related graph in this context is the enhanced power graph of a group which is introduced by Alipour et al.  
\cite{AalipourcameronEJC}.
\begin{definition}\label{defn: enhcdpowr graph}
The \emph{enhanced power graph} $\mathcal{G}_E(G)$ of a group $G$ is the graph with vertex set $G,$ and two vertices $a$ and $b$ are adjacent if and only if
 $a , b \in  \langle c \rangle$, for some $c \in G$.
\end{definition}

Various properties of the enhanced power graph of finite groups have been studied in detail. Aalipour et al. \cite{AalipourcameronEJC} 
characterized  finite groups for which the power and the enhanced power graphs are equal.
Besides, Zahirovic et al. \cite{Zahirovienhnacedpwrgraph} proved that finite groups with isomorphic enhanced power graphs have isomorphic directed power graphs.
Bera et al. \cite{enhancedpwrgrapbb3} studied the completeness, dominatability, and many other interesting properties of the enhanced power graph. 
The metric dimension 
of enhanced power graphs of finite groups was studied by Ma and She \cite{ma-She}. 

In the last decade, many researchers have studied matrices related to the power and enhanced power graph of finite groups. Chattopadhyay and Panigrahi \cite{chatto-pani-LAMA2015} studied the Laplacian spectra of power
graphs of finite cyclic groups and the dihedral groups. Mehranian et al. \cite{mehranian-gholami-ashrafi-LAMA2016} computed the  adjacency spectrum of the power graph of cyclic groups, dihedral groups, elementary abelian groups of prime power order and the Mathieu group.
Hamzeh and Ashrafi \cite{hamzehashrafiFILOMAT2017} investigated adjacency and Laplacian spectra of power
graphs of the cyclic, dihedral and quaternion groups. 

Wani and Shrivastav \cite{wani2020} studied the spectrum of the distance matrix of power graphs of cyclic groups, dihedral groups of order $2p^n$, dicyclic groups of order $4p^n$, and groups of order $pq$, where 
$p$ and $q$ are distinct primes.
However, they have not described the full spectrum explicitly for some of these graphs. In this paper, we give the full spectrum of the distance matrix of the power graph of these groups. 
The main aim of this paper is to study the distance spectra of the enhanced power graphs of finite groups. In Section \ref{sec:Prelim}, we develop some backgrounds and state some earlier known results, which are crucial for the forthcoming sections. In Section  \ref{sec:cyclic-dihedral-dicyclic}, we study the distance spectra of enhanced power graphs of groups of order $pq$, dihedral groups, and dicyclic groups. We conclude the paper by considering certain classes of finite abelian groups viz. 
$\El(p^n) \times \El(q^m)$ and 
$\El(p^n) \times \mathbb{Z}_m$ in Section \ref{sec:pnqm} and \ref{sec:finiteabelgrp}. 

\section{Preliminaries} 
\label{sec:Prelim}

For the convenience of the reader, we start by recalling some basic
definitions and notations. 

\begin{enumerate}
\item Throughout the paper, we consider $G$ as a finite group and denote its identity element by $e$.  $|G|$ denotes the cardinality of the set $G.$ For a prime $p,$ a group $G$ is said to be a $p$-group if $|G|=p^{r}$ for some $r\in \mathbb{N}.$ If
every element of $G$ is of order $p$ we call $G$ to be an \emph{elementary abelian $p$-group} and the elementary abelian group of order $p^n$ is denoted by $\El(p^n)$. We use $G^*$ to denote the set $G \backslash \{e\}$.

    \item Let $\Gamma$ be a graph with vertex set $V(\Gamma)$ and edge set $E(\Gamma)$. 
For a vertex $u$, we denote by $N(u)$, the set of vertices adjacent to $u$. A vertex is called a 
\emph{dominating vertex} if it is adjacent to every other
vertex in the graph. A \emph{path} in a graph is a finite sequence of edges which joins a sequence of distinct vertices. A graph is said to be \emph{connected} if there exists a path between for any pair of vertices $u$ and $v.$ The distance between two vertices $u$ and $v$ is the length of the shortest path between them and it is denoted by $d(u, v).$ For a graph $\Gamma,$ $\text{diam}(\Gamma)= \max_{u, v \in V} d(u, v)$ is called the \emph{diameter} of $\Gamma$. 

\item All vectors are considered to be column vectors unless stated otherwise. We use $\0_n$ and $\1_n$ to denote the zero vector and the all ones vector in $\rr^{n}$. The identity and the all ones matrix of order $n$ are denoted by $I_{n}$ and $J_{n}$, respectively. We use $J_{m,n}$ to denote the all ones matrix of order $m \times n$ and $O$ to denote the zero matrix of appropriate order. The Kronecker product of matrices is represented by $\otimes$.

\item A partition of a set $X$ is a set of non-empty subsets of $X$ such that every element of $X$ is in exactly one of these subsets.

\item Suppose $A$ is a real symmetric matrix whose rows and columns are indexed by $X=[n]$. Let $\{ X_1, X_2,\dots, X_m \}$ be a partition of $X$. 
Let $A$ be partitioned according to $\{X_1, \dots, X_m \}$, that is 
$$A=
 	 \left[
 	\begin{array}{ccc}
 	A_{1,1} & \cdots  & A_{1,m} \\
  \vdots & & \vdots \\
    A_{m,1} & \cdots  & A_{m,m} 
 	\end{array}
 	\right],
 	  $$
    where $A_{i,j}$ denotes the block submatrix of $A$ formed by rows in $X_i$ and the columns in $X_j$. Let $t_{i,j}$ denote the average row sum of $A_{i,j}.$ Then the matrix $T=(t_{i,j})$ is called the \textit{quotient matrix} of $A$ w.r.t the given partition (see Andries \cite{Andries2010}). 
    If the row sum of each block $A_{i,j}$ is constant then the partition $\{ X_1, X_2,\dots, X_m \}$ is called equitable.

\item Suppose $\Gamma$ is a simple graph with vertex set $V=\{1,2,\dots,n\}$. The adjacency matrix $A:=(a_{ij})$ of $\Gamma$ is an $n \times n$ matrix with 

\[a_{ij}:= \begin{cases}
1 &~\mbox{if}~$i$~\mbox{and}~$j$~\mbox{are adjacent}\\
0&~\mbox{otherwise}.
\end{cases}\] If a partition $V(\Gamma)=V_1 \cup \dotsc \cup V_m$ of the vertex set $V(\Gamma)$ is \emph{equitable} w.r.t the adjacency matrix, then it is easy to see that
for each $i \in [m]$ and for all $u,v \in V_i$, $|N(u) \cap V_j| = |N(v) \cap V_j|$, for all $j\in [m]$. Define $t_{ij} = |N(u) \cap V_j|$, then the quotient matrix corresponding to this equitable partition is $T=(t_{ij})$. Throughout this paper, we use the above discussion as the definition of an equitable partition of a graph.

\item For a simple graph $\Gamma$ with vertex set $V=\{1,2,\dots,n\}$, the matrix $D(\Gamma):=(d_{ij})$ is called the distance matrix of $\Gamma$, where 
\[d_{ij}:= \begin{cases}
0 &~\mbox{if}~i =j,\\
d(i,j)&~\mbox{otherwise}.
\end{cases}\]
Suppose $\text{diam}(\Gamma) \leq 2$ and let $\{V_1, \dotsc,V_m\}$ be an equitable partition of $\Gamma$. Let $D$ be partitioned as $D=(D_{ij})$, 
    where $D_{i,j}$ denotes the block submatrix of $D$ formed by rows in $V_i$ and the columns in $V_j$. It is easy to see that row sum of each block $D_{i,j}$ is constant. For each $i,j \in [m]$, if $t^D_{ij}$ denotes the constant row sum of $D_{i,j}$, then
    \begin{equation}\label{eqn:T}
    t^D_{ij}:= \begin{cases}
2|V_i|-2-|N(u) \cap V_i|&~\mbox{if}~i= j\\
2|V_j|-|N(u) \cap V_j|&~\mbox{otherwise.}  
\end{cases}
\end{equation}
    In this paper, the matrix $T^D:=(t^D_{ij})$ is called the \emph{distance quotient matrix} of $\Gamma$.

\item For a matrix $B$, the characteristic polynomial is denoted by $\phi(B,x)$. We denote the characteristic polynomial of the adjacency and distance matrix of a  graph $\Gamma$ by $\phi_A(\Gamma,x)$ and $\phi_D(\Gamma,x)$, respectively.

\item For a graph $\Gamma$ with vertex set $V(\Gamma)=\{1,\dotsc,n\}$, the $\Gamma$-join of a set of graphs $\{\Gamma_i:1\leq i\leq n\}$ is the graph $\Gamma[\Gamma_1, \Gamma_2, \dots, \Gamma_n]$ with vertex set $V$ and edge set $E$, where
\[V:=\{(i,u): 1\leq i \leq n, u \in V(\Gamma_i)\}\]
\[E:=\{(i,u)(j,v): \mbox{if}~ij \in E(\Gamma)~\mbox{or else}~ i=j~\mbox{and}~uv \in E(\Gamma_i)\}.\]
Note that the graph $\Gamma[\Gamma_1,\dotsc,\Gamma_n]$ is obtained by replacing each vertex $i \in V(\Gamma)$ by $\Gamma_i$. For the sake of convenience, we assume $V(\Gamma[\Gamma_1, \Gamma_2, \dots, \Gamma_n])=\cup_{i=1}^{n}V(\Gamma_i)$. 
 
\end{enumerate}

We now state some basic results, which are useful in proving the main results of the paper.
\begin{theorem}{\rm \cite[Chapter 2]{Andries2010}}\label{Tdivides_ch_pol}
Suppose $\Gamma$ is a graph on $n$ vertices with diameter at most $2$ and let $V_1,\dotsc,V_m$ be an equitable partition of $V(\Gamma)$. Let $T^D$ be the matrix defined in (\ref{eqn:T}). Then $\phi(T^D,x)$ divides $\phi_D(\Gamma,x)$.
\end{theorem}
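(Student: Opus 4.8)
The plan is to adapt to the distance matrix the classical argument that the quotient matrix of an equitable partition divides the characteristic polynomial of the adjacency matrix. Write $D:=D(\Gamma)$ and let $S$ be the $n\times m$ \emph{characteristic matrix} of the partition $\{V_1,\dots,V_m\}$, whose $(v,i)$ entry is $1$ when $v\in V_i$ and $0$ otherwise; thus the columns $s_1,\dots,s_m$ of $S$ are the indicator vectors of the parts. Since these columns are nonzero and have pairwise disjoint supports, they are linearly independent, so $\rank(S)=m$.

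First I would establish the identity $DS=S\,T^D$. For $v\in V_i$ and $j\in[m]$ we have $(DS)_{v,j}=\sum_{w\in V_j}d(v,w)$ and $(ST^D)_{v,j}=t^D_{ij}$, so it is enough to verify that the left-hand sum equals the value prescribed in \eqref{eqn:T}. This is exactly where the hypothesis $\text{diam}(\Gamma)\le 2$ is used: for $w\ne v$ we have $d(v,w)\in\{1,2\}$, with $d(v,w)=1$ for precisely the $|N(v)\cap V_j|$ neighbours of $v$ in $V_j$ and $d(v,w)=2$ for the remaining vertices of $V_j\setminus\{v\}$. Summing these contributions --- and including the term $d(v,v)=0$ when $i=j$ --- gives exactly $t^D_{ij}$; equitability with respect to the adjacency matrix ensures that $|N(v)\cap V_j|$, and hence this sum, depends only on $i$ and not on the particular $v\in V_i$. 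This is the only place the hypotheses on $\Gamma$ enter.

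With $DS=ST^D$ and $\rank(S)=m$ in hand, the remainder is linear algebra. The column space $\mathcal{U}:=\Range(S)$ is $m$-dimensional and $D$-invariant, since $v=Sx$ implies $Dv=DSx=ST^Dx\in\mathcal{U}$; moreover, in the basis $s_1,\dots,s_m$ of $\mathcal{U}$ the restriction of $D$ to $\mathcal{U}$ is represented by $T^D$, because $Ds_j=S(T^De_j)=\sum_i (T^D)_{ij}s_i$. Extending $s_1,\dots,s_m$ to a basis of $\rr^{n}$, the matrix of $D$ in this basis is block upper triangular of the form $\begin{pmatrix}T^D & B\\ 0 & C\end{pmatrix}$ for suitable matrices $B,C$; therefore $\phi_D(\Gamma,x)=\phi(T^D,x)\,\phi(C,x)$, and in particular $\phi(T^D,x)$ divides $\phi_D(\Gamma,x)$.

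I do not expect a genuine obstacle: the argument is routine once the block row sums of $D$ are known to be constant, and this constancy was essentially already recorded in the discussion preceding the statement. The only point that requires a little care is the case $i=j$ of \eqref{eqn:T}, where the self-distance $d(v,v)=0$ must be subtracted when counting the distance-$2$ vertices inside $V_i$.
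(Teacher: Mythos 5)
Your argument is correct and is the standard one: the paper itself gives no proof of this statement (it is quoted from Brouwer--Haemers, \cite[Chapter 2]{Andries2010}), and your proposal reproduces the textbook argument faithfully. The computation verifying $DS=ST^D$ (including the $i=j$ case with the self-distance subtracted) matches the definition in (\ref{eqn:T}) exactly, and the invariant-subspace/block-triangularization step correctly yields $\phi(T^D,x)\mid\phi_D(\Gamma,x)$.
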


The following result appears in 
\cite[Theorem 2.1]{stevanovic-AMC-2002}. Since it is presented differently than what is required for our results, we state it below in a different way.

\begin{theorem}\label{thm:join}
Suppose $\Gamma$ is a connected graph with vertex set $\{1,\dotsc,p\}$ and diameter atmost $2$. If  $\Gamma_i$, $1 \leq i \leq p$ are all $r_i$ regular with diameter at most 2, then $V(\Gamma_1), \dotsc, V(\Gamma_p)$ is an equitable partition of $\Gamma[\Gamma_1,\dotsc,\Gamma_p]$. Let $T^D$ denote the matrix associated with this partition as defined in (\ref{eqn:T}). Then the characteristic polynomial of the distance matrix of $\Gamma[\Gamma_1,\dotsc,\Gamma_p]$ is 
\[\phi_D(\Gamma[\Gamma_1,\dotsc,\Gamma_p],x)= \phi(T^D,x) \prod_{i=1}^{p} \frac{\phi_D(\Gamma_i,x)}{(x-2n_i+2+r_i)}.\]
Here, for each $i$, $n_i$ is the number of vertices in $\Gamma_i$.
\end{theorem}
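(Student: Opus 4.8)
The plan is to exploit the fact that, since every $\Gamma_i$ and $\Gamma$ have diameter at most $2$, the join $H:=\Gamma[\Gamma_1,\dots,\Gamma_p]$ has all of its nonzero vertex distances equal to $1$ or $2$, so its distance matrix is a simple modification of its adjacency matrix. Writing $N:=\sum_{i=1}^p n_i$ and indexing the vertices of $H$ by the blocks $V(\Gamma_1),\dots,V(\Gamma_p)$, I would first check that two distinct vertices in the same block $V(\Gamma_i)$ are at distance $1$ or $2$ in $H$ (because $\Gamma_i$ is an induced subgraph of diameter $\le 2$), and that two vertices in different blocks $V(\Gamma_i),V(\Gamma_j)$ are at distance $1$ if $ij\in E(\Gamma)$ and $2$ otherwise (because $\mathrm{diam}(\Gamma)\le 2$ supplies a common neighbour of $i$ and $j$ in $\Gamma$). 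Hence $D:=D(H)=2(J_N-I_N)-A(H)$, where $A(H)$ has the usual block form for a $\Gamma$-join: diagonal blocks $A(\Gamma_i)$ and off-diagonal blocks $a_{ij}J_{n_i,n_j}$, with $(a_{ij})$ the adjacency matrix of $\Gamma$. From this the block partition is visibly equitable, and substituting $|N(u)\cap V_i|=r_i$ and $|N(u)\cap V_j|=a_{ij}n_j$ (for $u\in V_i$, $j\ne i$) into (\ref{eqn:T}) gives $t^D_{ii}=2n_i-2-r_i$ and $t^D_{ij}=(2-a_{ij})n_j$.

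Next I would decompose $\rr^N=W\oplus W^\perp$, where $W=\span\{\1_{n_1},\dots,\1_{n_p}\}$ is the space of vectors constant on each block and $W^\perp$ is the space of vectors summing to zero on each block. Using regularity in the form $A(\Gamma_i)\1_{n_i}=r_i\1_{n_i}$ together with the action of the all-ones blocks, a direct computation of $D\1_{n_i}$ shows that $W$ is $D$-invariant, and that the matrix of $D|_W$ in the basis $\{\1_{n_1},\dots,\1_{n_p}\}$ is exactly $T^D$; and $W^\perp$ is $D$-invariant because $D$ is symmetric. This already yields the factorisation $\phi_D(H,x)=\phi(T^D,x)\,\phi(D|_{W^\perp},x)$, so it remains to identify the second factor.

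For $W^\perp$ I would, for each $i$, take an orthonormal basis $v_{i,1},\dots,v_{i,n_i-1}$ of $\1_{n_i}^\perp\subseteq\rr^{n_i}$ consisting of eigenvectors of $A(\Gamma_i)$, with eigenvalues $\mu_{i,1},\dots,\mu_{i,n_i-1}$; padded by zeros on the other blocks these form a basis of $W^\perp$, and from $D=2(J_N-I_N)-A(H)$ one reads off $Dv_{i,k}=-(2+\mu_{i,k})v_{i,k}$, whence $\phi(D|_{W^\perp},x)=\prod_{i=1}^p\prod_{k=1}^{n_i-1}(x+2+\mu_{i,k})$. On the other hand, $\mathrm{diam}(\Gamma_i)\le 2$ forces $D(\Gamma_i)=2(J_{n_i}-I_{n_i})-A(\Gamma_i)$, whose spectrum consists of $2n_i-2-r_i$ (eigenvector $\1_{n_i}$) together with the values $-(2+\mu_{i,k})$; thus $\phi_D(\Gamma_i,x)=(x-2n_i+2+r_i)\prod_{k=1}^{n_i-1}(x+2+\mu_{i,k})$, i.e. $\prod_{k=1}^{n_i-1}(x+2+\mu_{i,k})=\phi_D(\Gamma_i,x)/(x-2n_i+2+r_i)$. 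Substituting this into $\phi_D(H,x)=\phi(T^D,x)\,\phi(D|_{W^\perp},x)$ gives the stated formula.

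Most of this is routine block-matrix bookkeeping; the step that deserves care is the last one, justifying the quotient $\phi_D(\Gamma_i,x)/(x-2n_i+2+r_i)$ --- one must be sure that $x-2n_i+2+r_i$ genuinely divides $\phi_D(\Gamma_i,x)$ and that the quotient is precisely the ``non-Perron'' part of $D(\Gamma_i)$'s characteristic polynomial. This is exactly where both hypotheses on the $\Gamma_i$ are used: regularity makes $\1_{n_i}$ an eigenvector of $A(\Gamma_i)$, and diameter $\le 2$ puts $D(\Gamma_i)$ in the form $2(J_{n_i}-I_{n_i})-A(\Gamma_i)$. One should also record that $2n_i-2-r_i\ne-(2+\mu)$ for every eigenvalue $\mu$ of $A(\Gamma_i)$ (since $n_i\ge 1$), so no accidental collision spoils the factorisation, and that $\{\1_{n_1},\dots,\1_{n_p}\}$, while an orthogonal basis of $W$, need not be orthonormal --- harmless, since the characteristic polynomial does not depend on the choice of basis.
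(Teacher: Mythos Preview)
The paper does not supply its own proof of this theorem; it is quoted as a reformulation of \cite[Theorem~2.1]{stevanovic-AMC-2002}. Your argument is correct and is essentially the standard route to such results (the distance analogue of Schwenk's theorem): the identity $D(H)=2(J_N-I_N)-A(H)$ valid because $\mathrm{diam}(H)\le 2$, invariance of the block--constant subspace $W$ under $D$ with matrix $T^D$ in the basis of block indicators, and diagonalisation of $D$ on $W^\perp$ by the eigenvectors of each $A(\Gamma_i)$ orthogonal to $\1_{n_i}$. Each step checks out as you describe.

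One small remark: the worry in your last paragraph about an ``accidental collision'' $2n_i-2-r_i=-(2+\mu_{i,k})$ is unnecessary. Once you have the factorisation
\[
\phi_D(\Gamma_i,x)=(x-2n_i+2+r_i)\prod_{k=1}^{n_i-1}(x+2+\mu_{i,k})
\]
as an identity in $\rr[x]$, the quotient $\phi_D(\Gamma_i,x)/(x-2n_i+2+r_i)$ is a well-defined polynomial equal to $\prod_k(x+2+\mu_{i,k})$ regardless of whether that linear factor happens to repeat among the others; the theorem should be read as asserting exactly this polynomial identity. (Your observation that such a coincidence in fact cannot occur, since $r_i-2n_i<-r_i\le\mu_{i,k}$, is true but not needed.)
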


\section{Groups of order $pq$, Dihedral groups, and Dicyclic groups}
\label{sec:cyclic-dihedral-dicyclic} 

In this section, we study the distance spectra of enhanced power graphs of groups of order $pq$, dihedral groups, and dicyclic groups.
We start with the non-abelian groups of order $pq$, where $p$ and $q$ are distinct primes. This is because the enhanced power graph of any abelian group of order $pq$ is complete. 
\begin{theorem}
\label{thm:nonyclicpq_Enhancedpower} 
Let $p,q$ be primes with $p<q$ and $G_{pq}$ denote the unique non-abelian group of order $pq$, when it exists. Then, the characteristic polynomial of the distance matrix of $\mathcal{G}_E(G_{pq})$ is
\begin{equation*} 
\begin{aligned} 
 \phi_D(\mathcal{G}_E(G_{pq}),x) & = (x+1)^{pq-q-2} (x+p)^{q-1} \bigg[x^3+x^2(-2pq+p+q+2)+ \\ & \hspace{6 mm}  x(-2pq^2-2pq+2q^2+2p+1) -(pq^2+pq-p-q^2)\bigg]. 
\end{aligned}
\end{equation*} 

\end{theorem}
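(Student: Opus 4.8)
First I would recall the structure of $G_{pq}$: the unique non-abelian group of order $pq$ (which exists iff $q \equiv 1 \pmod p$) has a normal Sylow $q$-subgroup and $q$ Sylow $p$-subgroups. Concretely, its cyclic subgroups are: the trivial subgroup, one cyclic subgroup of order $q$ (call it $H$, $|H^*| = q-1$), and $q$ cyclic subgroups of order $p$ (call them $K_1,\dots,K_q$, each with $|K_i^*| = p-1$), and these exhaust all maximal cyclic subgroups. In the enhanced power graph $\mathcal{G}_E(G_{pq})$, two non-identity elements are adjacent iff they lie in a common cyclic subgroup, i.e. iff they lie in $H$ together or in some $K_i$ together. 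The identity $e$ is a dominating vertex. So $\mathcal{G}_E(G_{pq})$ is: a dominating vertex $e$, joined to everything; a clique on $H^*$ of size $q-1$; and $q$ cliques on the $K_i^*$ of size $p-1$ each, with no edges between distinct $K_i^*$ and no edges between $H^*$ and any $K_i^*$ (since $H \cap K_i = \{e\}$). Hence $\mathcal{G}_E(G_{pq})$ has diameter $2$ (any two vertices in different "branches" are connected through $e$), which is exactly the hypothesis needed for Theorem \ref{Tdivides_ch_pol} and Theorem \ref{thm:join}.

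**Applying the join structure.** The plan is to realize $\mathcal{G}_E(G_{pq})$ as a $\Gamma$-join. Take $\Gamma$ to be the star $K_{1,q+1}$ on $q+2$ vertices — a center vertex $0$ adjacent to leaves $1,\dots,q+1$ — and set $\Gamma_0 = K_1$ (the single vertex $e$), $\Gamma_1 = K_{q-1}$ (the clique on $H^*$), and $\Gamma_2 = \cdots = \Gamma_{q+1} = K_{p-1}$ (the cliques on the $K_i^*$). One checks that $\Gamma[\Gamma_0,\Gamma_1,\dots,\Gamma_{q+1})$ is exactly $\mathcal{G}_E(G_{pq})$: leaves are adjacent to the center component in the join (giving $e$ its dominating role) but distinct leaves are non-adjacent in $\Gamma$, so their components stay disconnected, matching the picture above. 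Each $\Gamma_i$ is regular (a complete graph $K_m$ is $(m-1)$-regular) with diameter at most $2$, and $\Gamma$ is connected with diameter $2$, so all hypotheses of Theorem \ref{thm:join} are met.

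**The computation.** Now I invoke Theorem \ref{thm:join}:
\[
\phi_D(\mathcal{G}_E(G_{pq}),x) = \phi(T^D,x)\,\prod_{i=0}^{q+1}\frac{\phi_D(\Gamma_i,x)}{x-2n_i+2+r_i}.
\]
For $\Gamma_i = K_m$ we have $D(K_m) = J_m - I_m$, so $\phi_D(K_m,x) = (x+1)^{m-1}(x-m+1)$, with $n_i = m$, $r_i = m-1$; the denominator is $x - 2m + 2 + (m-1) = x - m + 1$, so each factor collapses to $(x+1)^{m-1}$. For $\Gamma_0 = K_1$ this factor is $(x+1)^0 = 1$; for $\Gamma_1 = K_{q-1}$ it is $(x+1)^{q-2}$; for each of the $q$ copies of $K_{p-1}$ it is $(x+1)^{p-2}$, contributing $(x+1)^{q(p-2)} = (x+1)^{pq-2q}$ in total. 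Multiplying, the product is $(x+1)^{q-2+pq-2q} = (x+1)^{pq-q-2}$, which already accounts for the claimed $(x+1)^{pq-q-2}$ factor. It remains to compute the $4\times 4$ distance quotient matrix $T^D$ for the partition $\{e\}, H^*, K_1^*, \dots, K_q^*$ — but by symmetry among the $K_i^*$ the relevant reduced matrix acts on the three "types" $\{e\}$, $H^*$, $K^*$ (a generic $K_i^*$-block), so $\phi(T^D,x)$ factors as (a linear factor from the $q-1$ antisymmetric combinations of the $K_i^*$ blocks) times (a cubic). The linear factor: two vertices in different $K_i^*$ are at distance $2$, so the off-diagonal $K$-$K$ distance-block is $2J$ and within a block the entries are as in $D(K_{p-1})$; a difference $\mathbf{1}_{K_i^*} - \mathbf{1}_{K_j^*}$ is an eigenvector of $D$ with eigenvalue $(\text{diagonal }K\text{-block row sum}) - (\text{row sum of }2J\text{-block}) = t^D_{KK} - 2(p-1)$. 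Using \eqref{eqn:T} with $|V_i| = p-1$, $|N(u)\cap V_i| = p-2$: $t^D_{KK} = 2(p-1) - 2 - (p-2) = p - 2$, so the eigenvalue is $(p-2) - 2(p-1) = -p$, giving the $(x+p)^{q-1}$ factor. Finally the cubic: compute the $3\times 3$ quotient matrix on types $\{e\}, H^*, \{\text{all of }\cup K_i^*\}$ using \eqref{eqn:T} (with sizes $1$, $q-1$, $q(p-1)$, and noting $e$ is adjacent to everything, $H^*$ is internally complete but non-adjacent to the $K$-part, distance $2$ otherwise), and verify its characteristic polynomial equals the bracketed cubic in the statement.

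**Main obstacle.** Nothing here is deep; the one genuinely error-prone step is the bookkeeping in the $3\times 3$ quotient matrix and the expansion of its characteristic polynomial — getting the coefficients $-2pq+p+q+2$, $-2pq^2-2pq+2q^2+2p+1$, $-(pq^2+pq-p-q^2)$ exactly right. I would set $a = 1$ (size of $\{e\}$), $b = q-1$, $c = q(p-1) = pq - q$, write out $T^D$ via \eqref{eqn:T} — for instance the $\{e\}$-row is $(0,\;q-1,\;pq-q)$ since $e$ is adjacent to all; the $H^*$-diagonal entry is $2b - 2 - (b-1) = b-1 = q-2$ and the $H^*$-to-$K$ entry is $2c - 0 = 2(pq-q)$; the $K$-to-$e$ entry is $2a - 1 = 1$, the $K$-to-$H$ entry is $2b - 0 = 2(q-1)$, and the $K$-diagonal is $2c - 2 - (p-2) = 2pq - 2q - p$ — and then expand $\det(xI - T^D)$. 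The matching of the result to the stated cubic is then a routine (if tedious) polynomial identity, and the degree check ($pq - q - 2 + (q-1) + 3 = pq + 1$?)—wait, the total should be $pq$; indeed $pq - q - 2 + (q-1) + 3 = pq$, confirming consistency.
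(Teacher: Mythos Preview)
Your approach is correct and essentially the same as the paper's: both identify $\mathcal{G}_E(G_{pq})$ as the join $K_{1,q+1}[K_1, K_{q-1}, K_{p-1}, \dots, K_{p-1}]$, apply Theorem~\ref{thm:join} to extract the $(x+1)^{pq-q-2}$ factor, and then compute $\phi(T^D,x)$ for the $(q+2)\times(q+2)$ quotient matrix (your ``$4\times 4$'' is a slip---the partition has $q+2$ parts, as you yourself list). The only difference is in evaluating $\phi(T^D,x)$: the paper expands $\det(xI_{q+2}-T^D)$ directly via row and column operations, whereas you exploit the $S_q$-symmetry among the $K_i^*$ blocks to split off the eigenvalue $-p$ with multiplicity $q-1$ and reduce to the $3\times 3$ quotient on the coarser equitable partition $\{e\},\,H^*,\,\bigcup_i K_i^*$---both routes are valid and land on the same cubic.
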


\begin{proof}
By Sylow Theorem, there is a unique $q$-Sylow subgroup of $G_{pq}$, say $H_1$.
As $G_{pq}$ is non-cyclic, the number of p-Sylow subgroups
of $G_{pq}$ is $q$. Suppose $H_2,\dotsc,H_{q+1}$ are the p-Sylow subgroups
of $G_{pq}$. It is easy to see that $H_j$'s, $2 \leq j \leq q+1$ have a trivial intersection and hence no two non-identity elements of distinct $H_j$'s are adjacent in $\mathcal{G}_E(G_{pq})$. Moreover, since $G_{pq}$ is non-cyclic, no non-identity element of $H_1$ is  adjacent to a non-identity element of $H_j$ in $\mathcal{G}_E(G_{pq})$, for all $2 \leq j \leq q+1$. Finally, since each $H_j$, $1 \leq j \leq q+1$ induces a complete subgraph in $\mathcal{G}_E(G_{pq})$, we have
 \begin{equation}\label{EPG:pq:joinform}
     \mathcal{G}_E(G_{pq}) \cong K_{1,q+1}[K_1, K_{q-1}, \underbrace{K_{p-1},\dotsc,K_{p-1}}_{q} ].
 \end{equation}
 Suppose $T^D$ is the distance quotient matrix of $\mathcal{G}_E(G_{pq})$ corresponding to the equitable partition given in \eqref{EPG:pq:joinform}. Then 
 \[T^D= \left[\begin{array}{cccc}
     0& q-1 & (p-1)\1_q' \\
     1 & q-2 & (2p-2)\1_q' \\
     \1_q & (2q-2)\1_q & 2(p-1)J_{q}-pI_{q}
\end{array}\right].\]
Using Theorem \ref{thm:join}, we get 
\begin{equation} \label{EPG:pq:charpoly}
    \begin{aligned}
    \phi_D(\mathcal{G}_E(G_{pq}),x) &= \phi(T^D,x) \frac{\phi_D(K_{q-1},x)}{(x-(q-2))} \prod_{i=1}^{q} \frac{\phi_D(K_{p-1},x)}{(x-(p-2))}  \\
    &=\phi(T^D,x) (x+1)^{q-2} (x+1)^{(p-2)q}\\
    &=\phi(T^D,x)  (x+1)^{pq-q-2}.
    \end{aligned}
\end{equation}
We now determine the characteristic polynomial of $T^D$. We know that 
\begin{equation}
    \begin{aligned}
    \phi(T^D,x)&=\det(xI_{q+2}-T^D)\\ &= \left|\begin{array}{cccc}
     x & 1-q &  (1-p)\1_q' \\
     -1 & x-(q-2) & (2-2p)\1_q' \\
     -\1_q & (2-2q)\1_q & (x+p)I_q-2(p-1)J_q
\end{array}\right|. 
\end{aligned}
\end{equation} 
Using elementary row and column operations, we have

\begin{equation}\label{EPG:pq:Tcharpoly}
    \begin{aligned}
    \phi(T^D,x)
&= \left|\begin{array}{cccc}
     x & 1-q &  (1-p)\1_q' \\
     -1 & x-(q-2) & (2-2p)\1_q' \\
     0 & -(x+q)\1_q & (x+p)I_q
\end{array}\right| \\
& = x \left|\begin{array}{cc}
     x-q+2 & (2-2p)\1_q' \\
     -(x+q)\1_q & (x+p)I_q 
\end{array}\right| +
\left|\begin{array}{cc}
     1-q & (1-p)\1_q' \\
     -(x+q)\1_q & (x+p)I_q 
\end{array}\right|
\\
& = x (x-q+2)(x+p)^q+xq(2-2p)(x+q)(x+p)^{q-1} +(1-q)(x+p)^q \\ & \hspace{4 mm} +q(x+q)(1-p)(x+p)^{q-1}\\
& = (x+p)^{q-1} [x^3+x^2(-2pq+p+q+2)+x(-2pq^2-2pq+2q^2+2p+1)- \\
 & \hspace{6 mm} (pq^2+pq-p-q^2)]  .
\end{aligned}
\end{equation} 
From \eqref{EPG:pq:charpoly} and \eqref{EPG:pq:Tcharpoly}, the proof is complete. 
\end{proof}

\begin{corollary}
\label{cor:det_pqgrp}
Let $p,q$ be primes with $p<q$ and $G_{pq}$ denote the unique non-abelian group of order $pq$, when it exists. The distance matrix $D$ of $\mathcal{G}_E(G_{pq})$ is non-singular. 
\end{corollary}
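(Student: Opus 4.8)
The plan is to evaluate the determinant of $D$ directly from the characteristic polynomial computed in Theorem \ref{thm:nonyclicpq_Enhancedpower}, using the elementary fact that $\det D = (-1)^{pq}\,\phi_D(\mathcal{G}_E(G_{pq}),0)$. Since $pq$ is odd (both $p$ and $q$ are odd primes, as the non-abelian group of order $pq$ requires $q \equiv 1 \pmod p$ with $p < q$, forcing $p \geq 3$), we have $\det D = -\phi_D(\mathcal{G}_E(G_{pq}),0)$, so it suffices to show the constant term of the characteristic polynomial is nonzero.

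First I would substitute $x = 0$ into the factored form. The factor $(x+1)^{pq-q-2}$ contributes $1$, and $(x+p)^{q-1}$ contributes $p^{q-1} \neq 0$. The only thing that can vanish is the cubic factor evaluated at $0$, namely $-(pq^2 + pq - p - q^2)$. So the whole argument reduces to checking that $pq^2 + pq - p - q^2 \neq 0$, equivalently $q^2(p-1) + p(q-1) \neq 0$. Since $p \geq 3$ and $q \geq 3$, both summands $q^2(p-1)$ and $p(q-1)$ are strictly positive, so their sum is strictly positive and in particular nonzero. This gives $\det D = -p^{q-1}\bigl(pq^2+pq-p-q^2\bigr) \neq 0$, and in fact shows $\det D < 0$.

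There is essentially no hard part here; the only point requiring a moment's care is confirming the parity of $pq$ (so that the sign $(-1)^{pq}$ is correctly $-1$) and confirming that the relevant group exists only when $p$ and $q$ are both odd — but this is immediate from the classification of groups of order $pq$ invoked implicitly in the theorem's hypothesis "when it exists." An alternative, if one wished to avoid the parity discussion, is simply to observe that $\phi_D(\mathcal{G}_E(G_{pq}),0) \neq 0$ directly implies $0$ is not an eigenvalue of $D$, hence $D$ is non-singular, without ever computing the sign of the determinant.

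I would therefore write the proof in three lines: recall that non-singularity of $D$ is equivalent to $\phi_D(\mathcal{G}_E(G_{pq}),x)$ having nonzero constant term; evaluate the expression from Theorem \ref{thm:nonyclicpq_Enhancedpower} at $x=0$ to get $\pm\, p^{q-1}\bigl(pq^2+pq-p-q^2\bigr)$; and note $pq^2+pq-p-q^2 = q^2(p-1)+p(q-1) > 0$ since $p,q \geq 3$. The main (very minor) obstacle is just bookkeeping the constant term correctly out of the displayed cubic.
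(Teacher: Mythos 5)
Your proof is correct and follows essentially the same route as the paper: both evaluate $\phi_D(\mathcal{G}_E(G_{pq}),x)$ at $x=0$ and reduce the claim to the positivity of $pq^2+pq-p-q^2=p(q^2+q-1)-q^2$. One small slip in your aside on the sign: since $\phi_D(\mathcal{G}_E(G_{pq}),0)=-p^{q-1}(pq^2+pq-p-q^2)$ and $pq$ is odd, $\det D=-\phi_D(\mathcal{G}_E(G_{pq}),0)=+p^{q-1}(pq^2+pq-p-q^2)>0$, not $<0$ as you wrote; this does not affect the non-singularity conclusion.
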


\begin{proof}
 As $q>1$, we have $q^2+q-1>q^2$ and hence $p(q^2+q-1) > q^2.$ Therefore, setting $x=0$ in Theorem \ref{thm:nonyclicpq_Enhancedpower}, we see that $$\det(D)=p^{q-1}[p(q^2+q-1)-q^2],$$
 which is clearly non-zero. The proof is complete.  
\end{proof}

By \cite[Theorem 28]{AalipourcameronEJC} the power graph and the enhanced power graph for a finite group $G$ are same if and only if every cyclic subgroup of $G$ has prime power order. Thus, for a non-cyclic group $G$ of order $pq$, the power graph and the enhanced power graph are the same and hence  $\mathcal{P}(G_{pq}) $ and $\mathcal{G}_E(G_{pq})$ have identical distance spectra. 
Next, we compute the distance spectra of the enhanced power graph of the dihedral group $D_{2n}$. For $n \geq 3$, the \emph{dihedral group} $D_{2n}$ of order $2n$ is defined
as
$$D_{2n}=\langle a,b: a^n=b^2=e, ab=ba^{-1} \rangle .$$ 
\begin{theorem}
\label{thm:enhancedpower-dihedral}
For any positive integer $n$, the characteristic polynomial of the distance matrix of the enhanced power graph of $D_{2n}$ is
 $$\phi_D(\mathcal{G}_E(D_{2n}),x) = (x+2)^{n-1}(x+1)^{n-2}(x^3 - (3n-4)x^2-(2n^2+4n-5)x-n^2-2n+2).$$
\end{theorem}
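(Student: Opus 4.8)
The plan is to mimic the structure of the proof of Theorem \ref{thm:nonyclicpq_Enhancedpower}: identify the enhanced power graph $\mathcal{G}_E(D_{2n})$ as a $\Gamma$-join of complete graphs, write down the distance quotient matrix, and apply Theorem \ref{thm:join}. First I would recall the subgroup structure of $D_{2n}=\langle a,b:a^n=b^2=e,\,ab=ba^{-1}\rangle$. The cyclic subgroup $\langle a\rangle\cong\mathbb{Z}_n$ contains all $n$ rotations, and its non-identity elements are pairwise adjacent in $\mathcal{G}_E(D_{2n})$ since they all lie in $\langle a\rangle$. Each of the $n$ reflections $a^ib$ has order $2$, generating a subgroup $\{e,a^ib\}$; these are the only cyclic subgroups containing a reflection, so a reflection is adjacent only to $e$ and to nothing else (in particular no two distinct reflections are adjacent, and no reflection is adjacent to a non-identity rotation). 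Hence, exactly as in \eqref{EPG:pq:joinform}, with $e$ playing the role of the central vertex and a star $K_{1,n+1}$ as the outer skeleton,
\[
\mathcal{G}_E(D_{2n}) \cong K_{1,n+1}\big[K_1,\,K_{n-1},\,\underbrace{K_1,\dotsc,K_1}_{n}\big].
\]
(The case $n$ even, where $a^{n/2}$ is central, does not change the enhanced power graph because $a^{n/2}$ already lies in $\langle a\rangle$; so the same join description holds for all $n\geq 3$, and one checks the small cases $n=1,2$ directly if needed, though the statement is really for $n\geq 3$.)

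Next I would write the distance quotient matrix $T^D$ for this equitable partition into blocks of sizes $1$, $n-1$, and $n$ copies of size $1$. Using \eqref{eqn:T}: the block $\{e\}$ is adjacent to everything, so its diameter-$2$ distances are all $1$; the $K_{n-1}$ block of rotations has internal distance $1$, distance $1$ to $e$, and distance $2$ to every reflection; each reflection has distance $1$ to $e$ and distance $2$ to all other $2n-2$ vertices. Collapsing the $n$ singleton reflection-classes (they are mutually equivalent under the partition, or one may merge them into a single class of size $n$ with an internal all-distances-$2$ block $2(J_n-I_n)$), I expect
\[
T^D=\left[\begin{array}{ccc}
0 & n-1 & \1_n' \\[2pt]
1 & n-2 & 2\cdot\1_n' \\[2pt]
\1_n & (2n-2)\1_n & 2J_n-2I_n
\end{array}\right],
\]
and then apply Theorem \ref{thm:join}. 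The factor contributions are $\phi_D(K_{n-1},x)/(x-(n-2))=(x+1)^{n-2}$ from the rotation block and $\phi_D(K_1,x)/(x-0)=1$ from each $K_1$ — but wait: $K_1$ contributes nothing, so the $(x+2)^{n-1}$ in the statement must come from the eigenvalues of $T^D$ restricted by multiplicity, i.e. from the $2J_n-2I_n$ block. Concretely, $2J_n-2I_n$ has eigenvalue $-2$ with multiplicity $n-1$ on the subspace orthogonal to $\1_n$, and on that subspace the off-diagonal coupling blocks act as zero; so $-2$ is an eigenvalue of $T^D$ with multiplicity $n-1$, contributing $(x+2)^{n-1}$, and the remaining $3\times 3$ quotient (on the classes of sizes $1$, $n-1$, $n$) gives the cubic. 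This reconciles the exponents: $(x+2)^{n-1}(x+1)^{n-2}$ times a cubic, matching the claim.

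The remaining work is the determinant computation for the cubic: compute $\phi(T^D,x)=\det(xI-T^D)$ using the same elementary row/column operations as in \eqref{EPG:pq:Tcharpoly} — subtract the $e$-row suitably, use the rank-one-plus-diagonal structure of the reflection block, and expand. I expect this to reduce, after factoring out $(x+2)^{n-1}$, to
\[
x^3-(3n-4)x^2-(2n^2+4n-5)x-(n^2+2n-2),
\]
and I would verify the constant term by setting $x=0$ (equivalently computing $\det D$ directly from the join) and the leading behaviour by a degree/trace check. The main obstacle is purely bookkeeping: getting the signs and the coefficients of the cubic exactly right through the cofactor expansion, since a single arithmetic slip propagates; a useful cross-check is to specialize $n$ to a small value (say $n=3$, the group $S_3$, which is also covered by Theorem \ref{thm:nonyclicpq_Enhancedpower} with $p=2,q=3$) and confirm both formulas agree.
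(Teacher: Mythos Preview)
Your proposal is correct and follows essentially the same route as the paper: the same join decomposition $\mathcal{G}_E(D_{2n})\cong K_{1,n+1}[K_1,K_{n-1},K_1,\dots,K_1]$, the same distance quotient matrix, and the same application of Theorem~\ref{thm:join} followed by a determinant computation. In fact your $(2,3)$-block $2\mathbf{1}_n'$ is the correct entry (rotations are at distance~$2$ from reflections); the paper's displayed $T^D$ has a typo there, though its final polynomial is computed with the right matrix.
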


\begin{proof}
It is easy to verify that 
$$V(\mathcal{G}_E(D_{2n}))= \langle e \rangle \cup \langle a \rangle^* \cup \langle ab \rangle^* \cup \dotsc \cup \langle a^{n-1}b \rangle^*$$ is an equitable partition for the vertex set of $\mathcal{G}_E(D_{2n})$. This yields,
 $$\mathcal{G}_E(D_{2n}) \cong K_{1,n+1}[K_1, K_{n-1}, \underbrace{K_1, \dotsc, K_1}_{n}].$$  By Theorem \ref{thm:join}, we get
$$\phi_D(\mathcal{G}_E(D_{2n}),x) = \phi(T^D,x) (x+1)^{2n-n-2},  $$
where \[T^D= \left[\begin{array}{cccc}
     0& n-1 & \1'_n \\
     1 & n-2 & \1'_n \\
     \1_n & (2n-2)\1_n & 2J_{n}-2I_{n}
\end{array}\right].\]
 Proceeding similarly as in the proof of Theorem \ref{thm:nonyclicpq_Enhancedpower}, we compute 
$$\phi(T^D,x)=(x+2)^{n-1}\big(x^3 - (3n-4)x^2-(2n^2+4n-5)x-n^2-2n+2\big).$$
This completes the proof.
\end{proof}
The following is an immediate consequence of theorem \ref{thm:enhancedpower-dihedral}.
\begin{corollary}
    The distance matrix of the enhanced power graph of $D_{2n}$ is non-singular.
\end{corollary}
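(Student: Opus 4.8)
The plan is to evaluate the characteristic polynomial of Theorem~\ref{thm:enhancedpower-dihedral} at $x=0$ and check that the result is nonzero, since a matrix is nonsingular precisely when $0$ is not an eigenvalue, equivalently when its determinant (up to sign, $\phi_D(\mathcal{G}_E(D_{2n}),0)$) does not vanish. This mirrors exactly the argument used in Corollary~\ref{cor:det_pqgrp} for the groups of order $pq$.

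First I would substitute $x=0$ into
\[\phi_D(\mathcal{G}_E(D_{2n}),x) = (x+2)^{n-1}(x+1)^{n-2}\big(x^3 - (3n-4)x^2-(2n^2+4n-5)x-n^2-2n+2\big),\]
obtaining $\phi_D(\mathcal{G}_E(D_{2n}),0) = 2^{n-1}\cdot 1^{n-2}\cdot(-n^2-2n+2) = -2^{n-1}(n^2+2n-2)$. Then I would observe that $2^{n-1} \neq 0$ always, and that $n^2+2n-2 = (n+1)^2 - 3 > 0$ for every $n \geq 2$; one should also note that $D_{2n}$ is defined only for $n \geq 3$ in the excerpt, so $n^2+2n-2 \geq 13 > 0$ in all relevant cases. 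Hence $\det D = \pm\,2^{n-1}(n^2+2n-2) \neq 0$, so the distance matrix is nonsingular.

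There is essentially no obstacle here: the work is a one-line substitution plus the trivial inequality $n^2+2n-2>0$ for $n\ge 3$ (indeed for all $n\ge 1$). The only point requiring a modicum of care is making sure the quadratic-in-$n$ constant term of the cubic factor is genuinely positive rather than merely nonzero for small $n$ — but completing the square settles this immediately. Thus the corollary follows as a direct computational consequence of Theorem~\ref{thm:enhancedpower-dihedral}, with no additional machinery needed.
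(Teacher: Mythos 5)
Your proof is correct and is exactly the argument the paper intends (the paper simply calls the corollary an ``immediate consequence'' of Theorem~\ref{thm:enhancedpower-dihedral}, mirroring the substitution $x=0$ used in Corollary~\ref{cor:det_pqgrp}): evaluating the characteristic polynomial at $x=0$ gives $-2^{n-1}(n^2+2n-2)\neq 0$, so $0$ is not an eigenvalue. No gaps.
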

By $\mathcal{P}^*(G)$, we denote the induced subgraph of $\mathcal{P}(G)$ on $G \setminus \{e\}.$ This is also known as \emph{proper power graph} of $G$.
Suppose $\mathbb{Z}_n$ denotes the cyclic group of order $n$. In the following result, we compute the characteristic polynomial of the distance matrix of $\mathcal{P}(D_{2n})$ in terms of the characteristic
polynomials of the distance matrix of $\mathcal{P}(\mathbb{Z}_n)$ and $\mathcal{P}^*(\mathbb{Z}_n)$. This is analogous to \cite[Theorem 2.2]{CHATTOPADHYAY2018730}.

\begin{theorem}
\label{thm:power-dihedral}
For any positive integer $n$, the characteristic polynomial of the distance matrix of the power graph of $D_{2n}$ is
\begin{small}
\begin{equation*}
    \phi_D(\mathcal{P}(D_{2n}),x) = (x+2)^{n-1}(((4n+1)x+2(n+1))\phi_D( \mathcal{P}(\mathbb{Z}_n),x)-n(2x+1)^2 \phi_D( \mathcal{P}^*(\mathbb{Z}_n),x)).
\end{equation*}
\end{small} 
\end{theorem}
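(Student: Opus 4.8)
The plan is to pin down the graph $\mathcal{P}(D_{2n})$ explicitly, write its distance matrix in a convenient block form, and then compute $\phi_D(\mathcal{P}(D_{2n}),x)=\det(xI_{2n}-D)$ by one Schur-complement reduction followed by a few applications of the matrix determinant lemma; throughout, the decisive structural facts are that $e$ is a dominating vertex of $\mathcal{P}(\mathbb{Z}_n)$ and that every reflection of $D_{2n}$ is a pendant attached to $e$, which together force all the blocks to collapse. The argument parallels the one for \cite[Theorem 2.2]{CHATTOPADHYAY2018730}.

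First I would record the structure of $\mathcal{P}(D_{2n})$. Writing $D_{2n}=\langle a\rangle\cup\langle a\rangle b$, the rotations $\langle a\rangle$ induce $\mathcal{P}(\mathbb{Z}_n)$, while each reflection $a^ib$ has order $2$ with $\langle a^ib\rangle=\{e,a^ib\}$, so the reflections form an independent set in which every vertex is adjacent only to $e$. Consequently no shortest path between two rotations passes through a reflection, $e$ lies at distance $1$ from every reflection, and every remaining vertex lies at distance $2$ from every reflection (with $e$ a common neighbour). Moreover, since $e$ dominates $\mathcal{P}(\mathbb{Z}_n)$ and any generator of $\mathbb{Z}_n$ dominates $\mathcal{P}^*(\mathbb{Z}_n)$, both graphs have diameter at most $2$, so the distance between two non-identity rotations is the same in $\mathcal{P}(D_{2n})$, in $\mathcal{P}(\mathbb{Z}_n)$, and in $\mathcal{P}^*(\mathbb{Z}_n)$. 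Ordering the vertices as $e$, then $\langle a\rangle^{*}$, then the $n$ reflections, this gives
\[
D:=D(\mathcal{P}(D_{2n}))=\begin{pmatrix} D_1 & \mathbf{u}\,\1_n^{T}\\ \1_n\,\mathbf{u}^{T} & 2(J_n-I_n)\end{pmatrix},\qquad D_1:=D(\mathcal{P}(\mathbb{Z}_n))=\begin{pmatrix} 0 & \1_{n-1}^{T}\\ \1_{n-1} & D_0\end{pmatrix},
\]
where $D_0:=D(\mathcal{P}^*(\mathbb{Z}_n))$ and $\mathbf{u}=2\,\1_n-\mathbf{e}_1=(1,2,\dots,2)^{T}$. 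The two features I will exploit are that the off-diagonal block $\mathbf{u}\,\1_n^{T}$ has rank one and that the lower-left block of $xI_n-D_1$ is a scalar multiple of $\1_{n-1}$.

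Next I would eliminate the reflection block. Taking the Schur complement of $xI_{2n}-D$ with respect to the block $(x+2)I_n-2J_n$ (valid as a polynomial identity, so its singularity at $x=-2$ and $x=2n-2$ is harmless), and using that $\1_n$ is an eigenvector of that block so that $\1_n^{T}((x+2)I_n-2J_n)^{-1}\1_n=n/(x-2n+2)$, the rank-one off-diagonal block makes the remaining Schur complement equal to $xI_n-D_1-\frac{n}{x-2n+2}\,\mathbf{u}\mathbf{u}^{T}$. Since $\det((x+2)I_n-2J_n)=(x+2)^{n-1}(x-2n+2)$, the matrix determinant lemma yields
\[
\phi_D(\mathcal{P}(D_{2n}),x)=(x+2)^{n-1}\Big[(x-2n+2)\,\phi_D(\mathcal{P}(\mathbb{Z}_n),x)-n\,\mathbf{u}^{T}\adj(xI_n-D_1)\,\mathbf{u}\Big],
\]
and it remains to evaluate $\mathbf{u}^{T}\adj(xI_n-D_1)\mathbf{u}$ in terms of $P:=\phi_D(\mathcal{P}(\mathbb{Z}_n),x)$ and $Q:=\phi_D(\mathcal{P}^*(\mathbb{Z}_n),x)$. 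Expanding $\mathbf{u}=2\,\1_n-\mathbf{e}_1$ and using the symmetry of $\adj(xI_n-D_1)$, this equals $4\,\1_n^{T}\adj(xI_n-D_1)\1_n-4\,\1_n^{T}\adj(xI_n-D_1)\mathbf{e}_1+\mathbf{e}_1^{T}\adj(xI_n-D_1)\mathbf{e}_1$. The last term is the $(e,e)$-cofactor, which is $\det(xI_{n-1}-D_0)=Q$. For the first two, the matrix determinant lemma gives $\1_n^{T}\adj(xI_n-D_1)\mathbf{e}_1=\det(xI_n-D_1+\1_n\mathbf{e}_1^{T})-P$ and $\1_n^{T}\adj(xI_n-D_1)\1_n=\det(xI_n-D_1+J_n)-P$; and here the block form of $D_1$ does the work, because $xI_n-D_1+\1_n\mathbf{e}_1^{T}$ is block upper triangular with diagonal blocks $x+1$ and $xI_{n-1}-D_0$, so its determinant is $(x+1)Q$, while $xI_n-D_1+J_n$ is block diagonal with blocks $x+1$ and $xI_{n-1}-D_0+J_{n-1}$, so its determinant is $(x+1)\big(Q+\1_{n-1}^{T}\adj(xI_{n-1}-D_0)\1_{n-1}\big)$, and expanding $P=\det(xI_n-D_1)$ along the row and column of $e$ shows $\1_{n-1}^{T}\adj(xI_{n-1}-D_0)\1_{n-1}=xQ-P$. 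Assembling these pieces gives $\1_n^{T}\adj(xI_n-D_1)\1_n=(x+1)^2Q-(x+2)P$, hence $\mathbf{u}^{T}\adj(xI_n-D_1)\mathbf{u}=(2x+1)^2Q-4(x+1)P$; substituting into the displayed formula and simplifying $x-2n+2+4n(x+1)=(4n+1)x+2(n+1)$ produces exactly the asserted expression.

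I expect the main difficulty to be bookkeeping rather than anything conceptual: keeping the several rank-one updates and cofactors straight in the third step --- in particular the chain $P=xQ-\1_{n-1}^{T}\adj(xI_{n-1}-D_0)\1_{n-1}$, the block-diagonalisation of $xI_n-D_1+J_n$, and the final collection of terms --- without sign errors. The only other point needing genuine (if easy) justification is the claim made in the first step that the principal submatrix of $D(\mathcal{P}(D_{2n}))$ indexed by $\langle a\rangle$ is exactly $D(\mathcal{P}(\mathbb{Z}_n))$ and its submatrix indexed by $\langle a\rangle^{*}$ is exactly $D(\mathcal{P}^*(\mathbb{Z}_n))$, i.e.\ that neither the presence of the reflections nor the removal of $e$ alters any relevant distance; this is precisely where the two dominating-vertex observations are used. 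Once the rank-one off-diagonal block and the block-triangular shape of $xI_n-D_1+J_n$ are in place, the rest is routine.
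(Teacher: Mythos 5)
Your proof is correct, and it takes a genuinely different route from the paper. The paper never touches the full $2n\times 2n$ distance matrix: it invokes the decomposition $\mathcal{P}^*(\mathbb{Z}_n)\cong\Gamma_1[K_{\phi(n)},K_{\phi(d_1)},\dots,K_{\phi(d_t)}]$ of Mehranian et al.\ to express $\mathcal{P}(D_{2n})$ as a $\Gamma$-join of complete graphs, applies the join theorem to reduce everything to the $(n+t+2)\times(n+t+2)$ distance quotient matrix $T^D$, and then performs row and column operations on $\det(xI-T^D)$, recognising at the end that $T^{D}_{\mathbb{Z}}$ and $T^{D}_{\mathbb{Z}}+\1_{t+1}e_1'$ are the quotient matrices of $\mathcal{P}^*(\mathbb{Z}_n)$ and $\mathcal{P}(\mathbb{Z}_n)$ so that the join theorem converts their characteristic polynomials back into $\phi_D(\mathcal{P}^*(\mathbb{Z}_n),x)$ and $\phi_D(\mathcal{P}(\mathbb{Z}_n),x)$. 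You instead exploit only the two dominating-vertex facts, write $D$ in block form with the rank-one off-diagonal block $\mathbf{u}\1_n^{T}$, and finish with a Schur complement plus rank-one determinant updates; the identities $e_1^{T}\adj(xI_n-D_1)e_1=Q$, $\1_n^{T}\adj(xI_n-D_1)e_1=(x+1)Q-P$, $\1_{n-1}^{T}\adj(xI_{n-1}-D_0)\1_{n-1}=xQ-P$ all check out, and the assembly $4(x+1)^2-4(x+1)+1=(2x+1)^2$ and $x-2n+2+4n(x+1)=(4n+1)x+2(n+1)$ reproduces the stated formula exactly. What your approach buys is self-containedness and generality: it needs neither the divisor-indexed equitable partition of $\mathcal{P}^*(\mathbb{Z}_n)$ nor the join machinery, and it would apply verbatim with $\mathbb{Z}_n$ replaced by any group whose power graph is dominated by $e$ and whose proper power graph retains the same pairwise distances. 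What the paper's route buys is uniformity with the rest of its results, since the same quotient-matrix technique is reused for the other families of groups; your route requires slightly more care with the degenerate values $x=-2$ and $x=2n-2$ in the Schur complement, which you correctly dispose of by noting the identity is polynomial.
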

\begin{proof}
From \cite{Mirzargar}, we first note that $\mathcal{P}(D_{2n})$ is a union of $\mathcal{P}(\mathbb{Z}_n)$ and $n$ copies of $K_2$ that share in the identity element of $D_{2n}$. Thus, we have
\begin{equation}\label{D2n}
    \mathcal{P}(D_{2n}) \cong K_{1,n+1}[K_1,\underbrace{K_1,\dotsc,K_1}_{n},\mathcal{P}^*(\mathbb{Z}_n)].
\end{equation}
From \cite[Theorem 2.2]{mehranian-gholami-ashrafi-IJGT}, we know that 
\begin{equation}\label{Zn}
    \mathcal{P}^*(\mathbb{Z}_n) \cong \Gamma_1[K_{\phi(n)}, K_{\phi(d_1)}, \dots, K_{\phi(d_t)}],
\end{equation}
where $\Gamma_1:=K_1+\Delta_n$ and $\Delta_n$ is the graph with vertex set $V(\Delta_n)=\{d_i:1,n\neq d_i, d_i | n, 1 \leq i \leq t \}$ and edge set 
$E(\Delta_n)=\{ d_id_j: d_i|d_j, 1 \leq i <j \leq t \}$. Combining (\ref{D2n}) and (\ref{Zn}), we have
\begin{equation}\label{PG:D2n:joinform}
    \begin{aligned}
    \mathcal{P}(D_{2n}) &\cong K_{1,n+1}[K_1,\underbrace{K_1,\dotsc,K_1}_{n},\Gamma_1[K_{\phi(n)}, K_{\phi(d_1)}, \dots, K_{\phi(d_t)}]]\\
    &\cong \Gamma_2[K_1,\underbrace{K_1,\dotsc,K_1}_{n},K_{\phi(n)}, K_{\phi(d_1)}, \dots, K_{\phi(d_t)}],
    \end{aligned}
\end{equation}
where $\Gamma_2$ is the graph obtained by identifying the lower last pendant vertex of $K_{1,n+1}$ with the vertex corresponding to $K_1$ in $\Gamma_1$. Here, we are looking at $K_{1,n+1}$ as a graph with a vertex on the left attached to $n+1$ vertices on its right. Now, by Theorem \ref{thm:join} 

\begin{equation}\label{PG:D2n:Dcharpoly}
    \begin{aligned}
\phi_D( \mathcal{P}(D_{2n}),x) 
&=\phi(T^D,x) (x+1)^{\phi(n)-1} \prod_{i=1}^t (x+1)^{\phi(d_i)-1}   \\
   &=\phi(T^D,x)  (x+1)^{\phi(n)-1} (x+1)^{n-\phi(1)-\phi(n)-t}\\
   &=\phi(T^D,x)  (x+1)^{n-t-2}.
    \end{aligned}
\end{equation}
Let $T^{D}_{\mathbb{Z}}$ be the distance quotient matrix of $\mathcal{P}^*(\mathbb{Z}_n)$ corresponding to the equitable partition given by $\mathcal{P}^*(\mathbb{Z}_n) \cong \Gamma_1[K_{\phi(n)}, K_{\phi(d_1)}, \dots, K_{\phi(d_t)}]$. Suppose
$T^{D}_{\mathbb{Z}}(1)$ denotes the first row of $T^{D}_{\mathbb{Z}}$. Then
$T^{D}_{\mathbb{Z}}(1)= (\phi(n)-1, \phi(d_1) , \phi(d_2) , \cdots, \phi(d_t)  )$. Let $T^D$ be the distance quotient matrix of $\mathcal{P}(D_{2n})$ for the partition given in \eqref{PG:D2n:joinform}. Then
\[T^D= \left[\begin{array}{ccccccccccc}
     0& \1_n' & T^{D}_{\mathbb{Z}}(1)+e_1' \\
     \1_n& 2(J_n-I_n)& 2\1_n (T^{D}_{\mathbb{Z}}(1)+e_1')\\
     \1_{t+1}& 2 J_{t+1,n} & T^{D}_{\mathbb{Z}}\\
\end{array}\right],\]
where $e_1=(1,0,\dotsc,0)' \in \rr^{t+1}$. Now, the characteristic polynomial of $T^D$ is
\begin{equation*}
    \begin{aligned}
    \phi(T^D,x) &= \det(xI_{n+t+2}-T^D)\\
    &= \left|\begin{array}{ccccccccccc}
     x& -\1_n' & -T^{D}_{\mathbb{Z}}(1)-e_1' \\
     -\1_n& (x+2)I_n-2J_n& -2\1_n (T^{D}_{\mathbb{Z}}(1)+e_1')\\
     -\1_{t+1}& -2J_{t+1,n}& xI_{t+1}-T^{D}_{\mathbb{Z}}\\
\end{array}\right|.
    \end{aligned}
\end{equation*}
Subtracting twice the first row from the second to $(n+1)^{\rm th}$ row and then subtracting twice the first column from the second to $(n+1)^{\rm th}$ column, we get
\begin{equation*}
    \begin{aligned}
    \phi(T^D,x) 
&= \left|\begin{array}{ccccccccccc}
     x& -(2x+1)\1_n' & -T^{D}_{\mathbb{Z}}(1)-e_1' \\
     -(2x+1) \1_n& (x+2)I_n+(4x+2)J_n& O\\
     -\1_{t+1}& O& xI_{t+1}-T^{D}_{\mathbb{Z}}\\
\end{array}\right|\\
&= \begin{small}
\left|\begin{array}{ccccccccccc}
     x& -(2x+1)\1_{n-1}' & -(2x+1) & -T^{D}_{\mathbb{Z}}(1)-e_1' \\
     -(2x+1)\1_{n-1}& (x+2)I_{n-1}+(4x+2)J_{n-1}& (4x+2)\1_{n-1}& O\\
     -(2x+1)& (4x+2)\1'_{n-1}& (5x+4)& \0_{t+1}'\\
     -\1_{t+1}& O& \0_{t+1} & xI_{t+1}-T^{D}_{\mathbb{Z}}\\
\end{array}\right|.
\end{small}\\
\end{aligned}
\end{equation*}
Subtracting the $(n+1)^{\rm th}$ row from the second to $n^{\rm th}$ row
and then adding the sum of the second to $n^{\rm th}$ column to the $(n+1)^{\rm th}$ column, we have
\begin{equation*}
    \begin{aligned}
    \phi(T^D,x) 
&= \left|\begin{array}{ccccccccccc}
     x& -(2x+1)\1_{n-1} & -n(2x+1) & -T^{D}_{\mathbb{Z}}(1)-e_1' \\
     \0_{n-1}& (x+2)I_{n-1}& \0_{n-1} & O\\
     -(2x+1)& (4x+2)\1'_{n-1}& (4n+1)x+2(n+1)& \0'_{t+1}\\
     -\1_{t+1}& O& \0_{t+1} & xI_{t+1}-T^{D}_{\mathbb{Z}}\\
\end{array}\right|\\
&=  ((4n+1)x+2(n+1)) \left|\begin{array}{ccccccccccc}
     x& -(2x+1)\1_{n-1}  & -T^{D}_{\mathbb{Z}}(1)-e_1' \\
     \0_{n-1}& (x+2)I_{n-1} & O\\
     -\1_{t+1}& O & xI_{t+1}-T^{D}_{\mathbb{Z}}\\
\end{array}\right|\\& ~~~ - n(2x+1)^2 (x+2)^{n-1}\phi(T^{D}_{\mathbb{Z}},x).
    \end{aligned}
\end{equation*}
In the first determinant, subtracting the $(n+1)^{\rm th}$ row from the first row  and then adding the first column to the $(n+1)^{\rm th}$ column, we get
\begin{equation}\label{PG:D2n:Tcharpoly}
    \begin{aligned}
      \phi(T^D,x) 
&= ((4n+1)x+2(n+1))\begin{small}
\left|\begin{array}{ccccccccccc}
     x+1& -(2x+1)\1_{n-1}  & \0_{t+1}' \\
     \0_{n-1}& (x+2)I_{n-1} & O\\
     -\1_{t+1}& O & xI_{t+1}-(T^{D}_{\mathbb{Z}}+\1_{t+1} e_1')\\
\end{array}\right|\end{small}\\
&~~~ - n(2x+1)^2 (x+2)^{n-1}\phi(T^{D}_{\mathbb{Z}},x)\\
&= (x+2)^{n-1}((x+1)((4n+1)x+2(n+1))\det(xI-(T^{D}_{\mathbb{Z}}+\1_{t+1} e_1'))
\\ &~~~- n(2x+1)^2 \phi(T^{D}_{\mathbb{Z}},x)).
    \end{aligned}
\end{equation}
Note that, $T^{D}_{\mathbb{Z}}+\1_{t+1} e_1'$ is the quotient matrix of $\mathcal{P}(\mathbb{Z}_n)$ corresponding to the equitable partition given by the expression $\mathcal{P}(\mathbb{Z}_n) \cong \Gamma_1[K_{\phi(n)+1}, K_{\phi(d_1)}, \dots, K_{\phi(d_t)}]$. Thus, from (\ref{PG:D2n:Dcharpoly}) and (\ref{PG:D2n:Tcharpoly}), we conclude that  
\begin{multline*}
    \phi_D( \mathcal{P}(D_{2n}),x)\\
       =(x+2)^{n-1}(((4n+1)x+2(n+1))\phi_D( \mathcal{P}(\mathbb{Z}_n),x)-n(2x+1)^2 \phi_D( \mathcal{P}^*(\mathbb{Z}_n),x)).    
\end{multline*}
The proof is complete.
\end{proof}

We now compute the distance spectra of the enhanced power graph of the dicyclic group. For $n \geq 3$, the dicyclic group $Dic_{4n}$ of order $4n$ is defined as
$$Dic_{4n}=\langle a,x: a^{2n}=e, x^2=a^n, ax=xa^{-1}  \rangle .$$ 

\begin{theorem}
\label{thm:enhancedpower-dicyclic}
For any positive integer $n$, the characteristic polynomial of the distance matrix of the enhanced power graph of $Dic_{4n}$ is
 $$\phi_D(\mathcal{G}_E(Dic_{4n}),x) = (x+1)^{3n-2} (x+3)^{n-1}(x^3-(6n-5)x^2-(8n^2+4n-7)x-(6n-3)) .$$
\end{theorem}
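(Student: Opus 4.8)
The plan is to proceed exactly as in the proof of Theorem~\ref{thm:enhancedpower-dihedral}: realize $\mathcal{G}_E(Dic_{4n})$ as a $\Gamma$-join of complete graphs, apply Theorem~\ref{thm:join}, and then evaluate the characteristic polynomial of the resulting distance quotient matrix.

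First I would analyze the cyclic subgroups of $Dic_{4n}$. The element $a$ generates $\langle a\rangle\cong\mathbb{Z}_{2n}$. Every element outside $\langle a\rangle$ has the form $a^i x$, and the relations give $(a^i x)^2=x^2=a^n$, so $a^i x$ has order $4$ and $\langle a^i x\rangle=\{e,\,a^i x,\,a^n,\,a^{n+i}x\}$. Since $\langle a^i x\rangle=\langle a^{n+i}x\rangle$, there are exactly $n$ such subgroups, each of order $4$ and each containing both $e$ and $a^n$; together with $\langle a\rangle$ these are all maximal cyclic subgroups, because a cyclic subgroup meeting $Dic_{4n}\setminus\langle a\rangle$ is generated by one of its elements. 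Hence in $\mathcal{G}_E(Dic_{4n})$ the vertices $e$ and $a^n$ are both dominating; the $2n-2$ vertices of $\langle a\rangle\setminus\{e,a^n\}$ induce a clique whose only neighbours outside are $e,a^n$; and each pair $\{a^i x,a^{n+i}x\}$ induces a clique whose only neighbours outside are $e,a^n$. The partition $\{e,a^n\}$, $\langle a\rangle\setminus\{e,a^n\}$, and the $n$ pairs is therefore equitable, and
\[\mathcal{G}_E(Dic_{4n})\;\cong\;K_{1,n+1}\bigl[K_2,\ K_{2n-2},\ \underbrace{K_2,\dots,K_2}_{n}\bigr],\]
with the central $K_2$ corresponding to $\{e,a^n\}$. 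Each $K_m$ is $(m-1)$-regular of diameter $\le 1$, and $\mathcal{G}_E(Dic_{4n})$ has diameter $\le 2$ since $e$ is dominating, so Theorem~\ref{thm:join} applies. Using $\phi_D(K_m,x)=(x-m+1)(x+1)^{m-1}$ together with $x-2m+2+(m-1)=x-m+1$, the product in Theorem~\ref{thm:join} collapses to $(x+1)^{1+(2n-3)+n}=(x+1)^{3n-2}$, so $\phi_D(\mathcal{G}_E(Dic_{4n}),x)=\phi(T^D,x)\,(x+1)^{3n-2}$.

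Next I would write $T^D$ in block form for the three super-blocks $\{e,a^n\}$, $\langle a\rangle\setminus\{e,a^n\}$, and the union of the $n$ pairs. From \eqref{eqn:T},
\[T^D=\left[\begin{array}{ccc} 1 & 2n-2 & 2\,\1_n' \\ 2 & 2n-3 & 4\,\1_n' \\ 2\,\1_n & (4n-4)\,\1_n & 4J_n-3I_n \end{array}\right].\]
This super-block partition of $T^D$ is again equitable: every vector supported on the last $n$ coordinates and orthogonal to $\1_n$ is an eigenvector of $T^D$ for the eigenvalue $-3$, so $(x+3)^{n-1}$ divides $\phi(T^D,x)$ and the remaining factor is the characteristic polynomial of the $3\times3$ quotient matrix
\[\widehat T=\left[\begin{array}{ccc} 1 & 2n-2 & 2n \\ 2 & 2n-3 & 4n \\ 2 & 4n-4 & 4n-3 \end{array}\right];\]
equivalently one performs the same row and column operations as in the proof of Theorem~\ref{thm:nonyclicpq_Enhancedpower} on $\det(xI-T^D)$ to pull out $(x+3)^{n-1}$ and reduce to a $3\times3$ determinant. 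A direct computation then gives $\phi(\widehat T,x)=x^3-(6n-5)x^2-(8n^2+4n-7)x-(6n-3)$, the three coefficients being checked via $\tr(\widehat T)=6n-5$, the sum of the $2\times2$ principal minors of $\widehat T$ equal to $-(8n^2+4n-7)$, and $\det(\widehat T)=6n-3$. Multiplying $(x+3)^{n-1}\phi(\widehat T,x)$ by $(x+1)^{3n-2}$ yields the claimed formula. The main obstacle is purely the bookkeeping — correctly reading off the block sizes and the entries of $T^D$ from \eqref{eqn:T}, and verifying the diameter hypothesis so that Theorems~\ref{Tdivides_ch_pol} and \ref{thm:join} may be invoked; the $3\times3$ determinant is short, and one may sanity-check the degenerate case $n=1$ (where $Dic_4\cong\mathbb{Z}_4$ and the formula collapses to $(x+1)^3(x-3)$).
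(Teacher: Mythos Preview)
Your proposal is correct and follows essentially the same route as the paper: you obtain the same join decomposition $\mathcal{G}_E(Dic_{4n})\cong K_{1,n+1}[K_2,K_{2n-2},K_2,\dots,K_2]$ and the same distance quotient matrix $T^D$, apply Theorem~\ref{thm:join} to extract $(x+1)^{3n-2}$, and then reduce the computation of $\phi(T^D,x)$ to a $3\times3$ determinant. The paper simply writes ``proceeding similar to the proof of Theorem~\ref{thm:nonyclicpq_Enhancedpower}'' for this last step, whereas you spell out the $(x+3)^{n-1}$ factorization and the trace/minor/determinant checks explicitly; both are the same argument.
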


\begin{proof}
There are two dominating vertices in $\mathcal{G}_E(Dic_{4n})$, one is the identity $e$ and the other is element $a^n$. Since $a^{2n}=e$ and $(a^ix)^2=x^2$, for $1 \leq i \leq n$
 $$\mathcal{G}_E(Dic_{4n})\cong K_{1,n+1}[K_2,K_{2n-2}, \underbrace{K_{2}, \dotsc, K_{2}}_{n} ].$$ 
Suppose $T^D$ is the distance quotient matrix of $\mathcal{G}_E(Dic_{4n})$ corresponding to the equitable partition given in \eqref{EPG:pq:joinform}. Then  
\[T^D= \left[\begin{array}{cccc}
     1& 2n-2 & 2\1_n' \\
     2 & 2n-3 & 4\1_n' \\
     2\1_n & 2(2n-2)\1_n & 4J_{n}-3I_{n}
\end{array}\right].\]
By Theorem \ref{thm:join}, we get
$$\phi_D(\mathcal{G}_E(Dic_{4n}),x) = \phi(T^D,x) (x+1)^{3n-2}. $$
Proceeding similar to the proof of Theorem \ref{thm:nonyclicpq_Enhancedpower}, the characteristic polynomial of $T^D$ can be easily computed. This completes the proof. 
\end{proof}
As a consequence of the above theorem, the distance matrix of the enhanced power graph of $Dic_{4n}$ is non-singular. Suppose $n$ is a power of $2$. From Theorem \ref{thm:enhancedpower-dicyclic}, we immediately have the following: $$\phi_D(\mathcal{P}(Dic_{4n}),x) = (x+1)^{3n-2} (x+3)^{n-1}(x^3-(6n-5)x^2-(8n^2+4n-7)x-(6n-3)) .$$
Wani and Srivastsav \cite[Proposition 2.12]{wani2020} also found out the distance quotient matrix of the graph $\mathcal{P}(Dic_{4n})$ but the $(n+1,n+2)$-th entry of their matrix is $2$, which is not correct, as seen from the proof of Theorem \ref{thm:enhancedpower-dicyclic}.

\section{The finite abelian group $\El(p^n) \times \El(q^m)$}\label{sec:pnqm}

 In this section, we are interested in studying the spectral properties of the power graph and the enhanced power graph of certain non-cyclic finite abelian groups. Kelarev and Quinn gave the structure of the power graphs of
all finite abelian groups in \cite{combinatorialpropertyandpowergraphsofgroupskq1}. We further study the structure of power and enhanced power graphs of these groups and obtain a precise representation in terms of equitable partitions.  
Suppose $p$ and $q$ are distinct primes.
We give two equitable partitions for the power graph of $\El(p^n) \times \El(q^m)$.  These partitions are equitable partitions for the enhanced power graph of $\El(p^n) \times \El(q^m)$ as well. The later is proved before section \ref{sec:EPG:pnqm:AM}. We recall that $e$ represents the identity element of a group. 
\begin{lemma}\label{PG:pnqm:partition1}
Define $V_1:= \{(e,e)\}$,  $V_2:= \{ (a, e): a \in \El(p^n)^* \}$,
$V_3:= \{ (a,b): a \in \El(p^n)^*,  b \in \El(q^m)^*\}$,  and 
$V_4:= \{ (e,b): b \in \El(q^m)^*\}$.
Then $\{V_1, V_2, V_3, V_4\}$  is an equitable partition for $\mathcal{P}(\El(p^n) \times \El(q^m))$.
\end{lemma}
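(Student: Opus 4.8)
The plan is to rewrite adjacency in $\mathcal{P}(G)$, with $G := \El(p^n)\times\El(q^m)$, as a condition on cyclic subgroups, and then count the neighbours of a representative vertex of each $V_i$ inside each $V_j$ and check the count is independent of the representative. To set this up, first record the cyclic-subgroup structure: every nonidentity element of $\El(p^n)$ has order $p$, so for $a\in\El(p^n)$ the group $\langle a\rangle$ is trivial if $a=e$ and of order $p$ otherwise, in which case $a'\in\langle a\rangle\setminus\{e\}$ is equivalent to $\langle a'\rangle=\langle a\rangle$ (and similarly for $\El(q^m)$ with $q$). Moreover $\langle(a,b)\rangle=\langle a\rangle\times\langle b\rangle$ for every $(a,b)\in G$: this is clear when $a=e$ or $b=e$, while if both are nontrivial then $(a,b)$ has order $pq$ and $\langle(a,b)\rangle$ contains $(a,b)^q=(a^q,e)$ and $(a,b)^p=(e,b^p)$, which generate $\langle a\rangle\times\{e\}$ and $\{e\}\times\langle b\rangle$ because $\gcd(p,q)=1$, already spanning a subgroup of order $pq$.

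Next, use that in a finite group $x$ is a power of $y$ iff $\langle x\rangle\subseteq\langle y\rangle$, so $x\sim y$ in $\mathcal{P}(G)$ iff $\langle x\rangle\subseteq\langle y\rangle$ or $\langle y\rangle\subseteq\langle x\rangle$. Together with the structure above this yields explicit adjacency rules: $(e,e)$ is adjacent to everything; no vertex of $V_2$ is adjacent to a vertex of $V_4$; inside $V_2$, $(a,e)\sim(a',e)$ iff $\langle a\rangle=\langle a'\rangle$, and similarly inside $V_4$; a vertex $(a,e)\in V_2$ is adjacent to $(a',b')\in V_3$ iff $\langle a\rangle=\langle a'\rangle$, and $(e,b)\in V_4$ is adjacent to $(a',b')\in V_3$ iff $\langle b\rangle=\langle b'\rangle$; and inside $V_3$, $(a_1,b_1)\sim(a_2,b_2)$ iff $\langle a_1\rangle=\langle a_2\rangle$ and $\langle b_1\rangle=\langle b_2\rangle$.

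Finally, count, using that $\El(p^n)$ has $(p^n-1)/(p-1)$ subgroups of order $p$ (the lines of $\mathbb{F}_p^n$), each with $p-1$ nonidentity elements, and likewise for $\El(q^m)$. Writing $N_p=p^n-1$ and $N_q=q^m-1$, so $|V_1|=1,|V_2|=N_p,|V_3|=N_pN_q,|V_4|=N_q$, the adjacency rules give, for a representative $u$ of each class, the vector $\bigl(|N(u)\cap V_1|,\dots,|N(u)\cap V_4|\bigr)$ equal to $(0,N_p,N_pN_q,N_q)$ on $V_1$, $(1,p-2,(p-1)N_q,0)$ on $V_2$, $(1,p-1,(p-1)(q-1)-1,q-1)$ on $V_3$, and $(1,0,(q-1)N_p,q-2)$ on $V_4$. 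Every entry depends only on $\langle a\rangle$ (resp. $\langle b\rangle$) being \emph{some} order-$p$ (resp. order-$q$) subgroup, not on which one, hence is constant on each class, which is exactly equitability.

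I expect the delicate point to lie in the middle step, namely checking that ``one of $(a,e),(a',b')$ is a power of the other'' collapses to $\langle a\rangle=\langle a'\rangle$: the inclusion $\langle a\rangle\subseteq\langle a'\rangle$ forces equality since both subgroups have order $p$, and $\langle(a',b')\rangle\not\subseteq\langle(a,e)\rangle$ because $b'\neq e$ cannot lie in the trivial second component of $\langle(a,e)\rangle$. The identity $\langle(a,b)\rangle=\langle a\rangle\times\langle b\rangle$ established first --- the only place $\gcd(p,q)=1$ is needed --- makes both of these transparent, after which the counting is routine.
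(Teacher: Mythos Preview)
Your proof is correct and follows essentially the same approach as the paper: both establish the key observation that $(a_2,b_2)\in\langle(a_1,b_1)\rangle$ iff $a_2\in\langle a_1\rangle$ and $b_2\in\langle b_1\rangle$ (equivalently $\langle(a,b)\rangle=\langle a\rangle\times\langle b\rangle$), and then use it to compute the identical neighbour-count vectors on each $V_i$. Your write-up is somewhat more careful in justifying that observation via $\gcd(p,q)=1$ and in spelling out the $V_2$--$V_3$ and $V_2$--$V_4$ adjacency rules, but the argument is the same.
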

\begin{proof}
Let $G=\El(p^n) \times \El(q^m)$. Since the sets $V_1, V_2, V_3$ and $V_4$ are mutually exclusive with cardinalities $1$, $p^n-1$, $(p^n-1)(q^m-1)$ and $q^m-1$,  respectively, $\{V_1,V_2,V_3,V_4\}$ forms a partition for $V(\mathcal{P}(G))$. Suppose $1 \leq i \leq 4$. We need to show that for each $i$ and for all $u,v \in V_i$, 
$$|N(u) \cap V_j| = |N(v) \cap V_j|,~\mbox{for}~ j=1,2,3,4.$$
For $i=1$, as $(e,e)$ is a dominating vertex, 
this readily holds.
Observe that for any $(a_1,b_1),(a_2,b_2) \in G$,
\begin{equation}\label{PG:pnqm:observation}
    (a_2,b_2) \in \langle (a_1,b_1) \rangle,~\mbox{if and only if}~ a_2 \in \langle a_1 \rangle~\mbox{and}~b_2 \in \langle b_1 \rangle.
\end{equation}
Using this observation, we have the following
\begin{enumerate}
    \item For any $u \in V_2$,
    \[|N(u) \cap V_1| = 1,~ |N(u) \cap V_2| = p-2,\]
    \[|N(u) \cap V_3| =(p-1)(q^m-1)~\mbox{and}~  |N(u)\cap V_4|=0.\]
\item For any $u \in V_3$,
    \[|N(u) \cap V_1| = 1,~ |N(u) \cap V_2| = p-1,\]
    \[|N(u) \cap V_3| = (p-1)(q-1)-1~\mbox{and}~  |N(u)\cap V_4|=q-1.\]
\item For any $u \in V_4$,
    \[|N(u) \cap V_1| = 1,~ |N(u) \cap V_2| = 0,\]
    \[|N(u) \cap V_3| = (p^n-1)(q-1)~\mbox{and}~  |N(u)\cap V_4|=q-2.\]
\end{enumerate}
Hence, $\{V_1, V_2, V_3, V_4\}$  is an equitable partition for $\mathcal{P}(G)$. The proof is complete.
\end{proof}

\begin{lemma}\label{PG:pnqm:partition2}
Suppose $\alpha = \frac{p^n-1}{p-1}$, and $\beta = \frac{q^m-1}{q-1}$. Let $\langle (a_1,e) \rangle ,\dots, \langle (a_{\alpha},e)\rangle $ be distinct subgroups of order $p$ and $\langle (e,b_1) \rangle ,\dots, \langle (e,b_{\beta}) \rangle $ be distinct subgroups of order $q$ in $\El(p^n) \times \El(q^m)$. Define $U_1:=\{(e, e)\} $, $V_i:= \langle (a_i,e) \rangle ^*$, $W_{i,j}:= \langle (a_i,b_j) \rangle \setminus (\langle (a_i,e) \rangle \cup \langle (e,b_j) \rangle )$ and $X_i:=\langle (e,b_i) \rangle ^*$, for $i\in[\alpha]$ and $j\in[\beta]$. Then  $$P:=\{U_1, V_1, \dotsc, V_\alpha, W_{1,1}, \dotsc , W_{1,\beta},  \dotsc,  W_{\alpha,1}, \dotsc , W_{\alpha,\beta}, X_1, \dotsc, X_\beta\}$$ forms an equitable partition for $\mathcal{P}(\El(p^n) \times \El(q^m))$.

\end{lemma}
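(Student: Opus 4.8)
The strategy mirrors the proof of Lemma \ref{PG:pnqm:partition1}: first check that $P$ is genuinely a partition of $G=\El(p^n)\times\El(q^m)$, then verify the neighbourhood-count condition class by class. For the partition claim I would argue that every non-identity element of $G$ lies in a unique cyclic subgroup of the form $\langle (a_i,e)\rangle$, $\langle (e,b_j)\rangle$, or $\langle (a_i,b_j)\rangle$ determined by its ``$p$-part'' and ``$q$-part'': writing an element as $(a,b)$, if $b=e$ it lies in exactly one $V_i$, if $a=e$ in exactly one $X_j$, and otherwise $(a,b)$ generates a cyclic group of order $pq$ whose unique subgroup of order $p$ is $\langle(a,e)\rangle=\langle(a_i,e)\rangle$ and of order $q$ is $\langle(e,b)\rangle=\langle(e,b_j)\rangle$, so it lies in exactly one $W_{i,j}$; disjointness from the $V$'s and $X$'s is exactly the set difference built into the definition of $W_{i,j}$. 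Counting, $|V_i|=p-1$, $|X_j|=q-1$, $|W_{i,j}|=pq-1-(p-1)-(q-1)=pq-p-q+1=(p-1)(q-1)$, and $1+\alpha(p-1)+\alpha\beta(p-1)(q-1)+\beta(q-1)=1+(p^n-1)+(p^n-1)(q^m-1)+(q^m-1)=p^nq^m=|G|$, confirming $P$ covers $G$.

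For the equitable condition, the key structural fact is again observation \eqref{PG:pnqm:observation}: in $\mathcal{P}(G)$ (which here coincides with $\mathcal{G}_E(G)$ since every cyclic subgroup has prime-power order — wait, that is false here, so I must work directly with $\mathcal{P}(G)$), two distinct elements $(a,b)$ and $(a',b')$ are adjacent iff one generates the other, i.e.\ one of the cyclic subgroups $\langle(a,b)\rangle,\langle(a',b')\rangle$ contains the other. Concretely: within a single $V_i$ all $p-1$ non-identity elements are mutually adjacent (they generate the same group of prime order); each such element is adjacent to $(e,e)$, to all of $V_i\setminus\{u\}$, to nothing in the other $V_{i'}$, nothing in any $X_j$, and — crucially — to all of $W_{i,j}$ for every $j$ (since $\langle(a_i,e)\rangle\subset\langle(a_i,b_j)\rangle$) and to no element of $W_{i',j}$ with $i'\neq i$. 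So for $u\in V_i$: $|N(u)\cap U_1|=1$, $|N(u)\cap V_i|=p-2$, $|N(u)\cap V_{i'}|=0$, $|N(u)\cap W_{i,j}|=(p-1)(q-1)$, $|N(u)\cap W_{i',j}|=0$, $|N(u)\cap X_j|=0$ — independent of the choice of $u\in V_i$. The symmetric statement holds for $X_j$. The one genuinely delicate class is $W_{i,j}$: an element $w\in W_{i,j}$ has order $pq$, so $\langle w\rangle$ contains $\langle(a_i,e)\rangle$, $\langle(e,b_j)\rangle$, and $w$ itself; hence $w$ is adjacent to $(e,e)$, to all of $V_i$, to all of $X_j$, to the other $(p-1)(q-1)-1$ elements of $W_{i,j}$, and to nothing in any $V_{i'}$ ($i'\neq i$), $X_{j'}$ ($j'\neq j$), or $W_{i',j'}$ with $(i',j')\neq(i,j)$ — because two distinct order-$pq$ cyclic subgroups of $G$ neither contains the other, and an order-$pq$ subgroup cannot sit inside an order-$p$ or order-$q$ one. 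Thus $|N(w)\cap U_1|=1$, $|N(w)\cap V_i|=p-1$, $|N(w)\cap X_j|=q-1$, $|N(w)\cap W_{i,j}|=(p-1)(q-1)-1$, and $0$ for all remaining blocks, again independent of $w$.

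Assembling these counts shows that for every block $B\in P$ and all $u,v\in B$ we have $|N(u)\cap B'|=|N(v)\cap B'|$ for all $B'\in P$, which is the definition of an equitable partition adopted in Section \ref{sec:Prelim}. I expect the only real care needed is in the $W_{i,j}$ bookkeeping — specifically the claim that distinct order-$pq$ cyclic subgroups of $\El(p^n)\times\El(q^m)$ are incomparable and that no $\langle(a_i,b_j)\rangle$ contains $\langle(a_{i'},e)\rangle$ for $i'\neq i$ — which follows from the observation \eqref{PG:pnqm:observation} applied to the $p$- and $q$-components separately, since $\El(p^n)$ has all its minimal subgroups of order $p$ with pairwise trivial intersection, and likewise $\El(q^m)$. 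Everything else is the same prime-order reasoning as in Lemma \ref{PG:pnqm:partition1}.
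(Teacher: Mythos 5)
Your proposal is correct and follows essentially the same route as the paper: verify that the classes are pairwise disjoint with the stated cardinalities, use observation \eqref{PG:pnqm:observation} to reduce adjacency to the $p$- and $q$-components, and compute the neighbourhood counts $|N(u)\cap B'|$ class by class, obtaining exactly the values in the paper's proof. The only difference is cosmetic: you additionally check that the cardinalities sum to $p^nq^m$ (the paper takes the covering of $G$ for granted), and your self-correcting aside about $\mathcal{P}(G)$ versus $\mathcal{G}_E(G)$ lands on the right choice, namely working directly with the power graph.
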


\begin{proof}
Let $G=\El(p^n) \times \El(q^m)$. Note that the sets in $P$ are mutually exclusive with $|U_1|=1$, $|V_i|=p-1$, $|W_{i,j}|=(p-1)(q-1)$ and $|X_i|=q-1$,  for $i\in [\alpha]$ and $j\in [\beta]$. Therefore, $P$ forms a partition for $V(\mathcal{P}(G))$. We need to show that $P$ is an equitable partition. 
Since, $(e,e)$ is a dominating vertex, $$|N((e,e)) \cap Y| = |Y|,~\mbox{for}~ Y\in P
\setminus\{U_1\}.$$
Using \eqref{PG:pnqm:observation}, we have the following
\begin{enumerate}
    \item Fix $i\in [\alpha]$ and let $u \in V_i$. Then for any $i' \in [\alpha]$ and $j \in [\beta],$
    \[|N(u) \cap U_1| = 1,~ |N(u) \cap V_{i'}|=\begin{cases}
  p-2 &~\mbox{if}~i'=i,\\
  0&~\mbox{otherwise,}
\end{cases}\]
    \[|N(u)\cap X_j|=0~\mbox{and}~|N(u) \cap W_{i',j}|=\begin{cases}
  (p-1)(q-1) &~\mbox{if}~i'=i,\\
  0&~\mbox{otherwise.}
\end{cases}\]

    \item Fix $i\in [\alpha]$, $j \in [\beta]$ and let $u \in W_{i,j}$. Then for any $i' \in [\alpha]$ and $j' \in [\beta],$
    \[|N(u) \cap U_1| = 1,~ |N(u) \cap V_{i'}|=\begin{cases}
  p-1 &~\mbox{if}~i'=i,\\
  0&~\mbox{otherwise,}
\end{cases}\]
    \[|N(u) \cap W_{i',j'}|=\begin{cases}
  (p-1)(q-1)-1 &~\mbox{if}~i'=i~\mbox{and}~j'=j,\\
  0&~\mbox{otherwise,}
\end{cases}\]
and
\[|N(u)\cap X_{j'}|=\begin{cases}
  q-1 &~\mbox{if}~j'=j,\\
  0&~\mbox{otherwise.}
\end{cases}\]

 \item Fix $j \in [\beta]$ and let $u \in X_{j}$. Then for any $i \in [\alpha]$ and $j' \in [\beta],$
    \[|N(u) \cap U_1| = 1,~ |N(u) \cap V_{i}|=0,\]
    \[|N(u) \cap W_{i,j'}|=\begin{cases}
  (p-1)(q-1) &~\mbox{if}~j'=j,\\
  0&~\mbox{otherwise,}
\end{cases}\]
and
\[|N(u)\cap X_{j'}|=\begin{cases}
  q-2 &~\mbox{if}~j'=j,\\
  0&~\mbox{otherwise.}
\end{cases}\]
\end{enumerate} 
Hence, $P$  is an equitable partition for $\mathcal{P}(G)$ and the proof is complete.
\end{proof} 
From Lemma \ref{PG:pnqm:partition2}, the following result is immediate.
\begin{lemma}\label{PG:pnqm:joinform}
Let $G=\El(p^n) \times \El(q^m)$. Suppose $\alpha = \frac{p^n-1}{p-1}$ and $\beta = \frac{q^m-1}{q-1}$. Then,
\begin{small}
    \begin{equation*}
\mathcal{P}(G)\cong (K_1+\Gamma)[K_1,\underbrace{K_{(p-1)},\dots,K_{(p-1)}}_{\alpha},\underbrace{K_{(p-1)(q-1)},\dots,K_{(p-1)(q-1)}}_{\alpha \beta},\underbrace{K_{(q-1)},\dots,K_{(q-1)}}_{\beta}],  
    \end{equation*}
\end{small}where $\Gamma$ is the graph given in Figure \ref{fig}.
\end{lemma}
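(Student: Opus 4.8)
The plan is to read everything off from the equitable partition $P$ of Lemma \ref{PG:pnqm:partition2}; essentially no new computation is needed, only a reinterpretation of the adjacency counts recorded in its proof. Two observations suffice. First, for every block $Y \in P$ and every $u \in Y$ the ``diagonal'' count satisfies $|N(u)\cap Y| = |Y|-1$ --- this is the $i'=i$ (and $j'=j$) case in Lemma \ref{PG:pnqm:partition2}, giving $p-2$ for $Y=V_i$, $(p-1)(q-1)-1$ for $Y=W_{i,j}$, $q-2$ for $Y=X_i$, and vacuously $0$ for $Y=U_1$ --- so each block induces a complete subgraph of $\mathcal{P}(G)$. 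Second, for any two distinct blocks $Y,Z\in P$ and any $u\in Y$, the ``off-diagonal'' count $|N(u)\cap Z|$ takes only the values $0$ or $|Z|$; this too is plainly visible from the case analysis in Lemma \ref{PG:pnqm:partition2} (for instance, for $u\in V_i$ the only nonzero cross-counts are $|N(u)\cap U_1|=1=|U_1|$ and $|N(u)\cap W_{i,j}|=(p-1)(q-1)=|W_{i,j}|$). Hence between any two distinct blocks $\mathcal{P}(G)$ is either complete bipartite or edgeless.

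Next I would observe that these two facts are exactly the statement that $\mathcal{P}(G)$ is the $H$-join of complete graphs, where $H$ is the graph on the blocks of $P$ --- listed in the order fixed in the definition of $P$ --- in which two blocks are adjacent precisely when all edges run between them in $\mathcal{P}(G)$. Reading the nonzero cross-counts of Lemma \ref{PG:pnqm:partition2}, the edges of $H$ are: $U_1$ adjacent to every other block; $V_i$ adjacent to $W_{i,j}$ for every $j\in[\beta]$; $X_j$ adjacent to $W_{i,j}$ for every $i\in[\alpha]$; and nothing else (so $V_i\not\sim V_{i'}$, $X_j\not\sim X_{j'}$, $V_i\not\sim X_j$, and $W_{i,j}\not\sim W_{i',j'}$ for distinct index pairs). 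Since $U_1$ is a dominating vertex of $H$, we may write $H=K_1+\Gamma$ with $\Gamma:=H-U_1$, and the block sizes $1$, $p-1$ (for $V_i$), $(p-1)(q-1)$ (for $W_{i,j}$), $q-1$ (for $X_i$) supply exactly the argument list in the claimed $(K_1+\Gamma)$-join.

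Finally I would identify $\Gamma$ with the graph of Figure \ref{fig}: in $\Gamma$ the vertices $V_1,\dots,V_\alpha$ are pairwise non-adjacent, the vertices $X_1,\dots,X_\beta$ are pairwise non-adjacent, and each $W_{i,j}$ has exactly the two neighbours $V_i$ and $X_j$; equivalently, $\Gamma$ is obtained from the complete bipartite graph $K_{\alpha,\beta}$ with parts $\{V_1,\dots,V_\alpha\}$ and $\{X_1,\dots,X_\beta\}$ by subdividing the edge joining $V_i$ and $X_j$ with the new vertex $W_{i,j}$. This is precisely the graph drawn in Figure \ref{fig}, and matching up the vertices finishes the proof.

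There is no real obstacle here beyond bookkeeping; the one point deserving a line of care is that being an equitable partition only forces $|N(u)\cap Z|$ to be independent of $u\in Y$, not to equal $0$ or $|Z|$, so one must explicitly note that the values actually obtained in Lemma \ref{PG:pnqm:partition2} are always either $0$ or a full block size --- that is what promotes ``equitable partition'' to ``$H$-join'' in this instance.
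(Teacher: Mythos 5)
Your proposal is correct and follows exactly the route the paper intends: the paper gives no proof at all, simply declaring the lemma ``immediate'' from Lemma \ref{PG:pnqm:partition2}, and your write-up supplies precisely the details that make it immediate --- each block induces a clique, each cross-count is $0$ or a full block size, and the resulting block graph is $K_1+\Gamma$ with $\Gamma$ the subdivision of $K_{\alpha,\beta}$ drawn in Figure \ref{fig}. Your closing remark that equitability alone does not force the cross-counts to be $0$ or $|Z|$ is a worthwhile point of care that the paper leaves unstated.
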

\begin{figure}[!h]
\centering
\begin{tikzpicture}[shorten >=1pt, auto, node distance=3cm, ultra thick,
   node_style/.style={circle,draw=black,fill=white !20!,font=\sffamily\Large\bfseries},
   edge_style/.style={draw=black, ultra thick}]
\node[label=left:\footnotesize$1$,draw] (1) at  (0,0) {};
\node[label=left:\footnotesize$2$,draw] (2) at  (0,-3.5) {};
\node (3) at  (0,-3.7) {};
\node (alpha-1) at  (0,-6.8) {}; 
\node[label=left:\footnotesize$\bf{\alpha}$,draw] (alpha) at  (0,-7) {}; 
\node[label=above:\footnotesize${\alpha+1}$,draw] (alpha+1) at  (6,1.1) {};
\node[label=right:\footnotesize$\alpha+2$,draw] (alpha+2) at  (6,0.35) {};
\node (alpha+3) at  (6,0.25) {};
\node (alpha+beta-1) at  (6,-0.9) {};
\node[label=right:\footnotesize$\alpha+\beta$,draw] (alpha+beta) at  (6,-1) {};
\node[label=above:\footnotesize$\alpha+\beta+1$,draw] (alpha+beta+1) at  (6,-2.4) {};
\node[label=above:\footnotesize$\alpha+\beta+2$,draw] (alpha+beta+2) at  (6,-3.15) {};
\node (alpha+beta+3) at  (6,-3.25) {};
\node (alpha+2beta-1) at  (6,-4.4) {};
\node[label=below:\footnotesize$\alpha+2\beta$,draw] (alpha+2beta) at  (6,-4.5) {};
\node[label=left:\footnotesize$\alpha+(\alpha-1)\beta+1$,draw] (alpha+alpha-1beta+1) at  (6,-5.9) {};
\node[label=right:\footnotesize$\alpha+(\alpha-1)\beta+2$,draw] (alpha+alpha-1beta+2) at  (6,-6.65) {};
\node (alpha+alpha-1beta+3) at  (6,-6.75) {};
\node (alpha+alphabeta-1) at  (6,-7.9) {};
\node[label=below:\footnotesize$\alpha+\alpha\beta$,draw] (alpha+alphabeta) at  (6,-8) {};
\node[label=right:\footnotesize$\alpha+\alpha\beta+1$,draw] (alpha+alphabeta+1) at  (12,0) {};
\node[label=right:\footnotesize$\alpha+\alpha\beta+2$,draw] (alpha+alphabeta+2) at  (12,-3.5) {};
\node (alpha+alphabeta+3) at  (12,-3.7) {};
\node (alpha+alphabeta+beta-1) at  (12,-6.8) {};
\node[label=right:\footnotesize$\alpha+\alpha\beta+\beta$,draw] (alpha+alphabeta+beta) at  (12,-7) {};
\draw  (1) to (alpha+1);
\draw  (1) to (alpha+2);
\draw  (1) to (alpha+beta);
\draw  (2) to (alpha+beta+1);
\draw  (2) to (alpha+beta+2);
\draw  (2) to (alpha+2beta);
\draw  (alpha) to (alpha+alpha-1beta+1);
\draw  (alpha) to (alpha+alpha-1beta+2);
\draw  (alpha) to (alpha+alphabeta);
\draw  (alpha+alphabeta+1) to (alpha+1);
\draw  (alpha+alphabeta+1) to (alpha+beta+1);
\draw  (alpha+alphabeta+1) to (alpha+alpha-1beta+1);
\draw  (alpha+alphabeta+2) to (alpha+2);
\draw  (alpha+alphabeta+2) to (alpha+beta+2);
\draw  (alpha+alphabeta+2) to (alpha+alpha-1beta+2);
\draw  (alpha+alphabeta+beta) to (alpha+beta);
\draw  (alpha+alphabeta+beta) to (alpha+2beta);
\draw  (alpha+alphabeta+beta) to (alpha+alphabeta);
\begin{scope}[dotted]
\draw  (3) to (alpha-1);  
\draw (alpha+3) to (alpha+beta-1);
\draw (alpha+beta+3) to (alpha+2beta-1);
\draw (alpha+alpha-1beta+3) to (alpha+alphabeta-1);
\draw (alpha+alphabeta+3) to (alpha+alphabeta+beta-1);
\end{scope}
\end{tikzpicture}
\caption{The graph $\Gamma$ related to the power graph of $\El(p^n) \times \El(q^m)$.} \label{fig}
\end{figure}
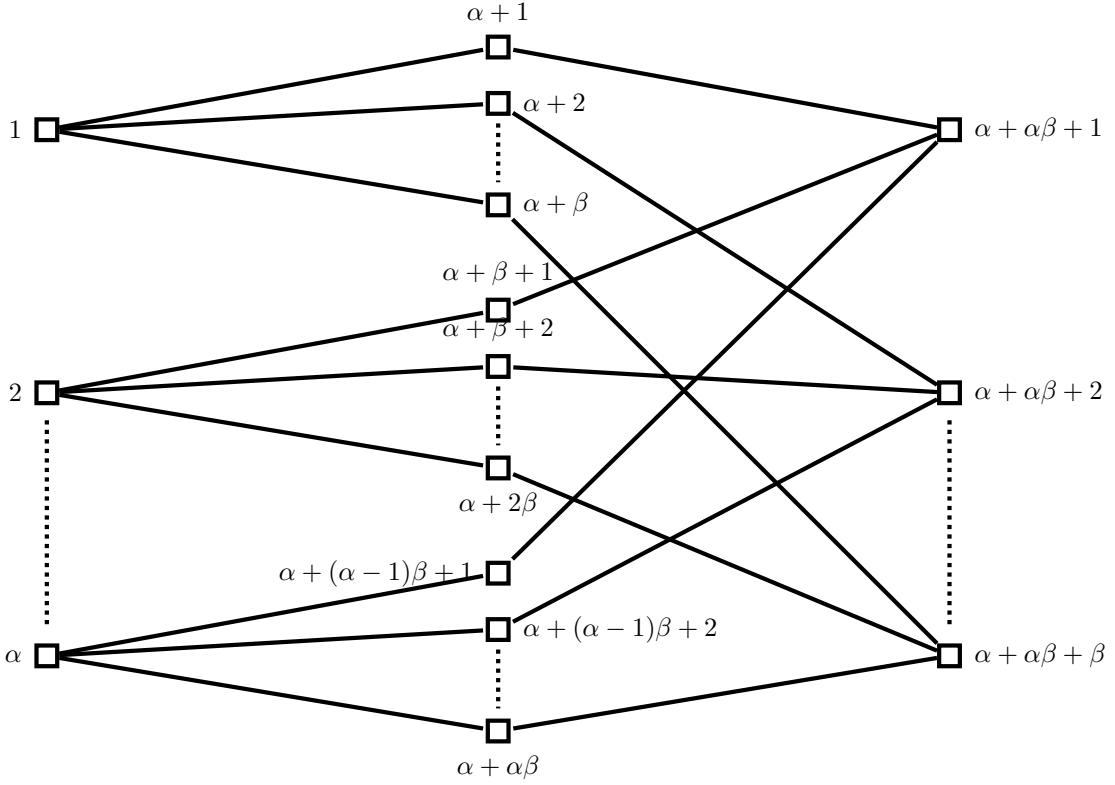
We now illustrate Lemma \ref{PG:pnqm:joinform} by the following example.
\begin{example}\rm
Suppose $p=2$, $q=3$ and $n=m=2$. Then  
\[V_1 = \langle (1,0,0,0) \rangle^*, V_2 = \langle (0,1,0,0) \rangle^*, V_3 = \langle (1,1,0,0) \rangle^*,\]
and 
\[X_1 = \langle (0,0,0,1) \rangle^*, X_2 = \langle (0,0,1,0) \rangle^*, X_3 = \langle (0,0,1,1) \rangle^* , X_4 = \langle (0,0,1,2) \rangle^*. \]
Therefore
\[W_{1,1} = \{(1, 0, 0, 1),(1, 0, 0, 2)\}, W_{1,2} = \{(1, 0, 1, 0), (1, 0, 2, 0)\}, \]
\[W_{1,3} = \{(1, 0, 1, 1), (1, 0, 2, 2)\}, W_{1,4} = \{(1, 0, 1, 2), (1, 0, 2, 1)\},\]
\[W_{2,1} = \{(0, 1, 0, 1), (0, 1, 0, 2)\}, W_{2,2} = \{(0, 1, 1, 0), (0, 1, 2, 0) \}, \]
\[W_{2,3} = \{(0, 1, 1, 1),(0, 1, 2, 2) \}, W_{2,4} = \{(0, 1, 1, 2),(0, 1, 2, 1)\},\]
\[W_{3,1} = \{(1, 1, 0, 1), (1, 1, 0, 2)\}, W_{3,2} = \{(1, 1, 1, 0), (1, 1, 2, 0) \}, \]
\[W_{3,3} = \{(1, 1, 1, 1), (1, 1, 2, 2) \},~\mbox{and}~ W_{3,4} = \{(1, 1, 1, 2), (1, 1, 2, 1)\}.\]
Hence the power graph of $\El(4) \times \El(9)$ is the graph in Figure \ref{PG:p2q2}. In Figure \ref{PG:p2q2}, each vertex  represents either the graph $K_1$ or $K_2$ and an edge between two graphs implies all the vertices of one graph are connected to all the vertices of the other graph.
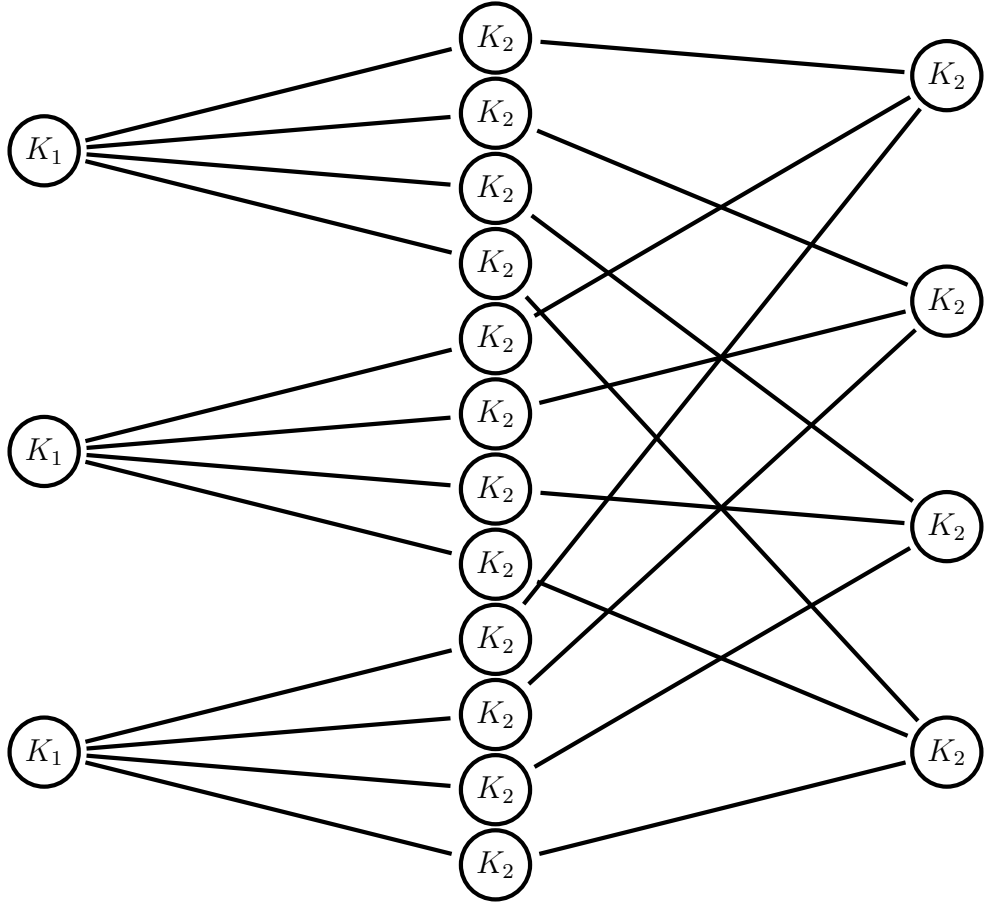
\begin{figure}[!h]
\centering
\begin{tikzpicture}[shorten >=1pt, auto, node distance=3cm, ultra thick,
   node_style/.style={circle,draw=black,fill=white !20!,font=\sffamily\Large\bfseries},
   edge_style/.style={draw=black, ultra thick}]
\node[vertex] (1) at  (0,0) {$K_1$};
\node[vertex] (2) at  (0,-4) {$K_1$};
\node[vertex] (alpha) at  (0,-8) {$K_1$};
\node[vertex] (alpha+1) at  (6,1.5) {$K_2$};
\node[vertex] (alpha+2) at  (6,0.5) {$K_2$};
\node[vertex] (alpha+3) at  (6,-0.5) {$K_2$};
\node[vertex] (alpha+beta) at  (6,-1.5) {$K_2$};

\node[vertex](alpha+beta+1) at  (6,-2.5) {$K_2$};
\node[vertex](alpha+beta+2) at  (6,-3.5) {$K_2$};
\node[vertex] (alpha+beta+3) at  (6,-4.5) {$K_2$};
\node[vertex] (alpha+2beta) at  (6,-5.5) {$K_2$};

\node[vertex](alpha+alpha-1beta+1) at  (6,-6.5) {$K_2$};
\node[vertex](alpha+alpha-1beta+2) at  (6,-7.5) {$K_2$};
\node[vertex] (alpha+alpha-1beta+3) at  (6,-8.5) {$K_2$};
\node[vertex](alpha+alphabeta) at  (6,-9.5) {$K_2$};

\node[vertex](alpha+alphabeta+1) at  (12,1) {$K_2$};
\node[vertex](alpha+alphabeta+2) at  (12,-2) {$K_2$};
\node[vertex] (alpha+alphabeta+3) at  (12,-5) {$K_2$};
\node[vertex] (alpha+alphabeta+beta) at  (12,-8) {$K_2$};
\draw  (1) to (alpha+1);
\draw  (1) to (alpha+2);
\draw  (1) to (alpha+3);
\draw  (1) to (alpha+beta);
\draw  (2) to (alpha+beta+1);
\draw  (2) to (alpha+beta+2);
\draw  (2) to (alpha+beta+3);
\draw  (2) to (alpha+2beta);
\draw  (alpha) to (alpha+alpha-1beta+1);
\draw  (alpha) to (alpha+alpha-1beta+2);
\draw  (alpha) to (alpha+alpha-1beta+3);
\draw  (alpha) to (alpha+alphabeta);
\draw  (alpha+alphabeta+1) to (alpha+1);
\draw  (alpha+alphabeta+1) to (alpha+beta+1);
\draw  (alpha+alphabeta+1) to (alpha+alpha-1beta+1);
\draw  (alpha+alphabeta+2) to (alpha+2);
\draw  (alpha+alphabeta+2) to (alpha+beta+2);
\draw  (alpha+alphabeta+2) to (alpha+alpha-1beta+2);
\draw  (alpha+alphabeta+3) to (alpha+3);
\draw  (alpha+alphabeta+3) to (alpha+beta+3);
\draw  (alpha+alphabeta+3) to (alpha+alpha-1beta+3);
\draw  (alpha+alphabeta+beta) to (alpha+beta);
\draw  (alpha+alphabeta+beta) to (alpha+2beta);
\draw  (alpha+alphabeta+beta) to (alpha+alphabeta);
\end{tikzpicture}
\caption{Proper power graph of $\El(4) \times \El(9)$.} \label{PG:p2q2}
\end{figure}
\end{example}

\subsection{Adjacency spectrum of $\mathcal{P}(\El(p^n) \times \El(q^m))$}
In the next lemma, we compute the quotient matrix of  $\mathcal{P}(\El(p^n) \times \El(q^m))$ corresponding to the equitable partitions given in Lemma \ref{PG:pnqm:partition1} and Lemma \ref{PG:pnqm:partition2}. As the proof of the lemma is direct, we omit it.
\begin{lemma}\label{PG:pnqm:T1T2}
The quotient matrices $T_1$ and $T_2$ of $\mathcal{P}( \El(p^n) \times \El(q^m))$ corresponding to the equitable partition given in Lemma \ref{PG:pnqm:partition1} and Lemma \ref{PG:pnqm:partition2}, respectively are
\begin{equation*}\label{PG:pnqm:T1}
T_1= \left[\begin{array}{cccc}
     0& p^n-1 &  (p^n-1)(q^m-1) &  q^m-1\\
     1& p-2 &  (p-1)(q^m-1) &  0\\
     1& p-1 &  (p-1)(q-1)-1 &  q-1\\
     1& 0 &  (p^n-1)(q-1) &  q-2\\
\end{array}
\right],
\end{equation*}
and 
\begin{equation} \label{PG:pnqm:T2}
  T_2=\left[ \begin{array}{cccc}
     0& (p-1)\1'_\alpha & (p-1)(q-1)\1'_{\alpha \beta} & (q-1) \1'_\beta \\
     \1_\alpha& (p-2)I_\alpha & (p-1)(q-1)I_\alpha \otimes \1'_\beta & O\\
     \1_{\alpha \beta} & (p-1) I_\alpha \otimes \1_\beta & ((p-1)(q-1)-1)I_{\alpha \beta} & (q-1)\1_\alpha \otimes I_\beta \\
     \1_\beta & O & (p-1)(q-1) \1'_\alpha \otimes I_\beta & (q-2) I_\beta
\end{array}
\right],
\end{equation}
where $\alpha = \frac{p^n-1}{p-1}$, and $\beta = \frac{q^m-1}{q-1}$.
\end{lemma}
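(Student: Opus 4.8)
The plan is to obtain both quotient matrices by reading off the neighbourhood counts already tabulated in the proofs of Lemma~\ref{PG:pnqm:partition1} and Lemma~\ref{PG:pnqm:partition2}. Recall that for an equitable partition $\{V_1,\dots,V_m\}$ of a graph, the $(i,j)$-entry of the associated adjacency quotient matrix is $t_{ij}=|N(u)\cap V_j|$ for any $u\in V_i$, which is well defined precisely by equitability (established in those lemmas). Thus the content here is purely organisational: substitute those counts into the definition of $t_{ij}$ and arrange the outcome into the stated block form.

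For $T_1$ this is immediate. The first row records that $(e,e)$ is a dominating vertex, so $t_{1j}=|V_j|$, giving $p^n-1$, $(p^n-1)(q^m-1)$, $q^m-1$ in columns $2,3,4$ and $0$ on the diagonal. Rows $2$, $3$, $4$ are, respectively, the three displayed blocks of counts in the proof of Lemma~\ref{PG:pnqm:partition1} for $u\in V_2$, $u\in V_3$, $u\in V_4$; transcribing them gives exactly the stated $4\times4$ matrix.

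For $T_2$ the only non-routine point is fixing a consistent labelling of the $\alpha\beta$ classes $W_{i,j}$. I would list them in lexicographic order $(1,1),(1,2),\dots,(1,\beta),(2,1),\dots,(\alpha,\beta)$, i.e.\ identify the block-index set with $[\alpha]\times[\beta]$. With this convention the qualitative statements in the proof of Lemma~\ref{PG:pnqm:partition2} translate directly into Kronecker expressions: ``$W_{i,j}\sim W_{i',j'}$ iff $(i,j)=(i',j')$'' turns the $W$--$W$ block into $((p-1)(q-1)-1)I_{\alpha\beta}$; ``$V_i$ is adjacent to $W_{i,1},\dots,W_{i,\beta}$'' gives the $V$--$W$ block $(p-1)(q-1)\,I_\alpha\otimes\1'_\beta$ and, on the other side, the $W$--$V$ block $(p-1)\,I_\alpha\otimes\1_\beta$; ``$X_j$ is adjacent to $W_{1,j},\dots,W_{\alpha,j}$'' gives the $X$--$W$ block $(p-1)(q-1)\,\1'_\alpha\otimes I_\beta$ and the $W$--$X$ block $(q-1)\,\1_\alpha\otimes I_\beta$. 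The $V$--$V$, $W$--$W$, $X$--$X$ diagonal blocks are $(p-2)I_\alpha$, $((p-1)(q-1)-1)I_{\alpha\beta}$, $(q-2)I_\beta$; the $V$--$X$ and $X$--$V$ blocks vanish; and the first row and column again come from $(e,e)$ being dominating, hence are the sizes $(p-1)$, $(p-1)(q-1)$, $(q-1)$ and the corresponding all-ones vectors. Assembling these blocks yields the displayed $T_2$.

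The main (indeed only) obstacle is this indexing bookkeeping for the Kronecker factors: one must check, for instance, that $I_\alpha\otimes\1'_\beta$ rather than $\1'_\beta\otimes I_\alpha$ is the matrix encoding ``$V_i$ meets the $\beta$ classes $W_{i,\ast}$'', which follows from the chosen lexicographic ordering since the $((i),(i',j))$-entry of $I_\alpha\otimes\1'_\beta$ is $(I_\alpha)_{i,i'}$. Once the ordering is pinned down, every entry is a direct transcription of the counts in the two partition lemmas, which is why the detailed verification may safely be omitted.
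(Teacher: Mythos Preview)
Your proposal is correct and is exactly the direct verification the paper has in mind: the paper itself omits the proof, stating only that it is direct, and what you outline---transcribing the neighbourhood counts from Lemmas~\ref{PG:pnqm:partition1} and~\ref{PG:pnqm:partition2} into the quotient-matrix entries, with the lexicographic ordering of the $W_{i,j}$ to pin down the Kronecker factors---is precisely that direct argument. There is nothing to add.
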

We now compute the characteristic polynomial of $T_2$.
\begin{lemma}\label{PG:pnqm:T2charpoly}
Suppose $T_2$ is the matrix given in \eqref{PG:pnqm:T2}. Then
\[\phi(T_2,x) = \phi(T_1,x) (x-pq+p+q)^{(\alpha-1)(\beta-1)} f(x)^{\alpha-1}g(x)^{\beta-1},\]
where $\alpha = \frac{p^n-1}{p-1}$, $\beta = \frac{q^m-1}{q-1}$, and $f(x)$ and $g(x)$ are the characteristic polynomials of the matrices $\left[\begin{array}{cc}
     p-2&  (p-1)(q^m-1)\\
     p-1& (p-1)(q-1)-1
\end{array}\right]$ and $\left[\begin{array}{cc}
     (p-1)(q-1)-1&q-1  \\
     (p^n-1)(q-1)& q-2
\end{array}\right]$, respectively.
\end{lemma}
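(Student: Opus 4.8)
The plan is to block-diagonalize $T_2$ by exhibiting an explicit $T_2$-invariant direct-sum decomposition of $\rr^{1+\alpha+\alpha\beta+\beta}$ that has $T_1$, many copies of two fixed $2\times2$ matrices, and a scalar as its diagonal blocks. Index the coordinates of $\rr^{1+\alpha+\alpha\beta+\beta}$ by the parts of the partition $P$ of Lemma~\ref{PG:pnqm:partition2}: write a vector as $(c_0;\,v;\,z;\,y)$ with $c_0\in\rr$ (the $U_1$-coordinate), $v\in\rr^{\alpha}$ (the $V_i$-block), $z\in\rr^{\alpha}\otimes\rr^{\beta}$ (the $W_{i,j}$-block, with the $W_{i,j}$ coordinate being the $(i,j)$-entry), and $y\in\rr^{\beta}$ (the $X_j$-block). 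Using $\rr^{\alpha}=\span(\1_\alpha)\oplus\1_\alpha^{\perp}$ and $\rr^{\beta}=\span(\1_\beta)\oplus\1_\beta^{\perp}$ splits $\rr^{\alpha}\otimes\rr^{\beta}$ into $\span\1_\alpha\otimes\span\1_\beta$, $\1_\alpha^{\perp}\otimes\span\1_\beta$, $\span\1_\alpha\otimes\1_\beta^{\perp}$ and $\1_\alpha^{\perp}\otimes\1_\beta^{\perp}$.

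Next I would verify, directly from \eqref{PG:pnqm:T2}, that the following four subspaces are $T_2$-invariant and compute the induced action on each. (i) $S_0:=\span\{(1;\0;\0;\0),\,(0;\1_\alpha;\0;\0),\,(0;\0;\1_\alpha\otimes\1_\beta;\0),\,(0;\0;\0;\1_\beta)\}$ is $4$-dimensional and $T_2|_{S_0}$ has matrix $T_1$ from Lemma~\ref{PG:pnqm:T1T2} in this basis; this is just the fact that $P$ refines the partition of Lemma~\ref{PG:pnqm:partition1} and an equitable quotient of an equitable quotient is again the equitable quotient, checked with $\1_\alpha'\1_\alpha=\alpha$, $\1_\beta'\1_\beta=\beta$, $(p-1)\alpha=p^n-1$, $(q-1)\beta=q^m-1$. (ii) For each $w\in\1_\alpha^{\perp}$, the plane $S_w:=\span\{(0;w;\0;\0),\,(0;\0;w\otimes\1_\beta;\0)\}$ is invariant (the $U_1$- and $X_j$-rows act by $0$ since $\1_\alpha'w=0$ and $(\1_\alpha'\otimes I_\beta)(w\otimes\1_\beta)=(\1_\alpha'w)\1_\beta=\0$), and $T_2|_{S_w}$ has matrix $M_f:=\left[\begin{smallmatrix} p-2 & (p-1)(q^m-1)\\ p-1 & (p-1)(q-1)-1\end{smallmatrix}\right]$ in the displayed basis; letting $w$ run over a basis of $\1_\alpha^{\perp}$ yields an invariant subspace of dimension $2(\alpha-1)$ on which $T_2$ is block-diagonal with $\alpha-1$ blocks $M_f$. (iii) By the symmetry $p\leftrightarrow q$, $\alpha\leftrightarrow\beta$, $V\leftrightarrow X$, for each $u\in\1_\beta^{\perp}$ the plane $S'_u:=\span\{(0;\0;\1_\alpha\otimes u;\0),\,(0;\0;\0;u)\}$ is invariant with $T_2|_{S'_u}$ given by $M_g:=\left[\begin{smallmatrix} (p-1)(q-1)-1 & q-1\\ (p^n-1)(q-1) & q-2\end{smallmatrix}\right]$, contributing $\beta-1$ blocks $M_g$. (iv) For $w\in\1_\alpha^{\perp}$ and $u\in\1_\beta^{\perp}$, the line $\span\{(0;\0;w\otimes u;\0)\}$ is invariant and $T_2$ acts on it by the scalar $(p-1)(q-1)-1=pq-p-q$, giving a $(pq-p-q)$-eigenspace of dimension $(\alpha-1)(\beta-1)$.

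These subspaces lie inside pairwise complementary pieces of the coordinate decomposition above, hence are linearly independent, and their dimensions sum to $4+2(\alpha-1)+2(\beta-1)+(\alpha-1)(\beta-1)=1+\alpha+\alpha\beta+\beta$, so their direct sum is the whole space. Therefore $T_2$ is similar to $T_1\oplus M_f^{\oplus(\alpha-1)}\oplus M_g^{\oplus(\beta-1)}\oplus(pq-p-q)I_{(\alpha-1)(\beta-1)}$, and taking characteristic polynomials gives $\phi(T_2,x)=\phi(T_1,x)\,f(x)^{\alpha-1}g(x)^{\beta-1}(x-pq+p+q)^{(\alpha-1)(\beta-1)}$ with $f=\phi(M_f,x)$, $g=\phi(M_g,x)$, which is the claim (when $\alpha=1$ or $\beta=1$ the corresponding factors are empty, consistent with the exponents).

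The only real work is the Kronecker-product bookkeeping in step (ii)--(iv): one applies each of the eight block entries of \eqref{PG:pnqm:T2} to vectors of the special shapes $w\otimes\1_\beta$, $\1_\alpha\otimes u$ and $w\otimes u$, using identities like $(I_\alpha\otimes\1_\beta')(w\otimes\1_\beta)=\beta\,w$ and $(\1_\alpha'\otimes I_\beta)(\1_\alpha\otimes u)=\alpha\,u$, and then invokes $(p-1)\alpha=p^n-1$, $(q-1)\beta=q^m-1$ to recognize the resulting $2\times2$ matrices as exactly $M_f$ and $M_g$ and the $4\times4$ action as $T_1$. This is routine; the only thing requiring a little insight is noticing that the Kronecker structure of \eqref{PG:pnqm:T2} forces precisely these invariant subspaces.
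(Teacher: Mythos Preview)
Your proposal is correct and follows essentially the same approach as the paper: both identify the same four $T_2$-invariant subspaces (the ``constant'' $4$-dimensional piece carrying $T_1$, the $2$-planes indexed by $\1_\alpha^\perp$ and by $\1_\beta^\perp$ carrying the $2\times2$ blocks, and the lines $w\otimes u$ carrying the scalar $pq-p-q$), the only difference being that the paper uses the explicit spanning vectors $v^i,w^j$ and lifts eigenvectors of $T_1,B,C$ rather than phrasing it as a block-diagonalization. Your invariant-subspace formulation is marginally cleaner in that it does not implicitly rely on $T_1$, $B$, $C$ being diagonalizable.
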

\begin{proof}
Suppose $(x_1,x_2,x_3,x_4)' \in \rr^{4}$ is an eigenvector of $T_1$ corresponding to an eigenvalue $\lambda$. Consider the vector $y=(x_1, x_2 \1_\alpha',x_3 \1_{\alpha \beta}',x_4\1_\beta')' \in \rr^{1+\alpha+\alpha\beta+\beta}$. Then, $y$ is an eigenvector of $T_2$ corresponding to the eigenvalue $\lambda$. Let $S_1$ be the set of eigenvectors of $T_2$ formed this way from the linear independent eigenvectors of $T_1$. Note that any linear dependence among the vectors in $S_1$ must provide an analogous dependence among the eigenvectors of $T_1$. Thus, we conclude that $S_1$ is linearly independent. 
Hence $\phi(T_1,x)$ divides $\phi(T_2,x)$. 
For $i \in \{1,\dotsc,\alpha-1\}$ and $j \in \{1,\dotsc,\beta-1\}$, we define the vectors $v^i:= (v^i_k)' \in \rr^{\alpha}$ and $w^j:= (w^j_k)' \in \rr^{\beta}$ as follows: 
\begin{equation*}\label{PG:pnqm:T2evec}
    v^i_k:= \begin{cases}
~~1&~\mbox{if}~k=i\\
-1&~\mbox{if}~k=\alpha\\
~~0&~\mbox{otherwise}
\end{cases}
~~~~~\mbox{and}~~~~~w^j_k:= \begin{cases}
~~1&~\mbox{if}~k=j\\
-1&~\mbox{if}~k=\beta\\
~~0&~\mbox{otherwise.}
\end{cases}
\end{equation*}
Now, for each $i \in \{1,\dotsc,\alpha-1\}$ and $j \in \{1,\dotsc,\beta-1\}$, 
\begin{equation*}
    \begin{aligned}
    T_2 \left[\begin{array}{c}
    0  \\
    \0_\alpha\\
    v^i\otimes w^j\\
    \0_\beta 
\end{array}
\right]
 &= \left[ \begin{array}{c}
     (p-1)(q-1)\1'_{\alpha \beta} ( v^i\otimes w^j)\\
     ((p-1)(q-1)I_\alpha \otimes \1'_\beta)( v^i\otimes w^j)\\
     ((p-1)(q-1)-1)I_{\alpha \beta}( v^i\otimes w^j)\\
     ((p-1)(q-1) \1'_\alpha \otimes I_\beta) ( v^i\otimes w^j)
\end{array}
\right]
=(pq-p-q)\left[ \begin{array}{c}
    0  \\
    \0_\alpha\\
    v^i\otimes w^j\\
    \0_\beta
\end{array}
\right].
    \end{aligned}
\end{equation*}
Since $S_2= \{(0 ,
    \0'_\alpha,
    v^i\otimes w^j,
    \0_\beta')': 1 \leq i \leq\alpha-1, 1 \leq j \leq \beta-1 \}$ is a linearly independent set, $pq-p-q$ is an eigenvalue of $T_2$ with multiplicity $(\alpha-1)(\beta-1)$. Clearly, $S_1 \cup S_2$ is a set of linearly independent eigenvectors of $T_2$. Now consider the following matrices
\[B=\left[\begin{array}{cc}
     p-2&  (p-1)(q^m-1)\\
     p-1& (p-1)(q-1)-1
\end{array}\right],~\mbox{and}~ C=\left[\begin{array}{cc}
     (p-1)(q-1)-1&q-1  \\
     (p^n-1)(q-1)& q-2
\end{array}\right].\]
Suppose $x=(x_1,x_2)$ is an eigenvector of $B$ corresponding to the eigenvalue $\lambda$. For each $i \in \{1,\dotsc,\alpha-1\}$, define $z^i:=(0,x_1v^i, x_2(v^i \otimes \1_\beta),\0_\beta)'$. Using the fact that $Bx=\lambda x$, we have for each $i$
\begin{equation*}
    \begin{aligned}
    T_2 z^i &= \left[\begin{array}{cc}
   x_1 (p-1)\1'_\alpha v^i+x_2(p-1)(q-1)\1'_{\alpha \beta}(v^i \otimes \1_\beta)\\
         x_1(p-2)I_\alpha v^i+x_2((p-1)(q-1)I_\alpha \otimes \1'_\beta)(v^i \otimes \1_\beta)\\
          x_1 ((p-1) I_\alpha \otimes \1_\beta) v^i+x_2((p-1)(q-1)-1)I_{\alpha \beta}(v^i \otimes \1_\beta)\\
           x_2((p-1)(q-1) \1'_\alpha \otimes I_\beta)(v^i \otimes \1_\beta)
    \end{array}\right]= \lambda z^i.
    \end{aligned}
\end{equation*}
Since $S_3:=\{z^i: i=1, \dotsc, \alpha-1\}$ is a linearly independent set of vectors, every eigenvalue of $B$ is an eigenvalue of $T_2$ with multiplicity $\alpha-1$. Similarly, suppose $x=(x_1,x_2)$ is an eigenvector of $C$ corresponding to the eigenvalue $\mu$. For each  $j \in \{1,\dotsc,\beta-1\}$, define $\widetilde{z}^j:=(0,\0_\alpha, x_1( \1_\alpha \otimes w^j ),x_2w^j)'$. Then
\begin{equation*}
    \begin{aligned}
    T_2 \widetilde{z}^j = \mu \widetilde{z}^j.
    \end{aligned}
\end{equation*}
Noting that $S_4=\{\widetilde{z}^j: j=1, \dotsc, \beta-1\}$ is a linearly independent set, we can conclude that every eigenvalue of $C$ is an eigenvalue of $T_2$ with multiplicity $\beta-1$. Suppose $f(x)$ and $g(x)$ are the characteristic polynomials of the matrices $B$ and $C$, respectively. Since 
$S_1 \cup S_2 \cup S_3 \cup S_4$  is linearly independent, we have 
\[\phi(T_2,x) = \phi(T_1,x) (x-pq+p+q)^{(\alpha-1)(\beta-1)} f(x)^{\alpha-1}g(x)^{\beta-1}.\]
This completes the proof.
\end{proof}

We are now ready to compute the characteristic polynomial of the adjacency matrix of the power graph of $\El(p^n) \times \El(q^m)$.
\begin{theorem}\label{PG:pnqm:AM:charpoly}.
Let $G=\El(p^n) \times \El(q^m)$. If $A$ is the adjacency matrix of $\mathcal{P}(G)$, then
\[\phi_A(\mathcal{P}(G),x)= \phi(T_1,x) (x+1)^{(p^nq^m-(\alpha +1)(\beta+1))} (x-pq+p+q)^{(\alpha-1)(\beta-1)} f(x)^{\alpha-1}g(x)^{\beta-1},\]
where $\alpha = \frac{p^n-1}{p-1}$, $\beta = \frac{q^m-1}{q-1}$, and $f(x)$ and $g(x)$ are the characteristic polynomials of the matrices $\left[\begin{array}{cc}
     p-2&  (p-1)(q^m-1)\\
     p-1& (p-1)(q-1)-1
\end{array}\right]$ and $\left[\begin{array}{cc}
     (p-1)(q-1)-1&q-1  \\
     (p^n-1)(q-1)& q-2
\end{array}\right]$, respectively.
\end{theorem}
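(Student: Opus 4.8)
The plan is to obtain the adjacency spectrum of $\mathcal{P}(G)$ directly from the $\Gamma$-join description in Lemma \ref{PG:pnqm:joinform} together with the quotient-matrix identities already established in Lemmas \ref{PG:pnqm:T1T2} and \ref{PG:pnqm:T2charpoly}. Write $A$ for the adjacency matrix of $\mathcal{P}(G)$ and let $P$ be the equitable partition of $V(\mathcal{P}(G))$ from Lemma \ref{PG:pnqm:partition2}, with cells $U_1$, the $V_i$, the $W_{i,j}$ and the $X_i$. These cells are precisely the ``super-vertices'' of the join $(K_1+\Gamma)[\dots]$, so (i) each cell induces a complete subgraph of $\mathcal{P}(G)$, and (ii) any two distinct cells are either completely joined to one another or completely non-adjacent. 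Note that $|P| = 1+\alpha+\alpha\beta+\beta = (\alpha+1)(\beta+1)$.

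First I would split $\rr^{p^nq^m} = \mathcal{U}\oplus\mathcal{U}^{\perp}$, where $\mathcal{U}=\span\{\mathbf{1}_C : C\in P\}$ is the space of vectors that are constant on every cell. Since $P$ is equitable, $\mathcal{U}$ is $A$-invariant, and in the basis $\{\mathbf{1}_C\}$ the operator $A|_{\mathcal{U}}$ is represented exactly by the quotient matrix $T_2$ of \eqref{PG:pnqm:T2}; hence $\phi(A|_{\mathcal{U}},x)=\phi(T_2,x)$. For the complement, fix a cell $C$ and a vector $v$ supported on $C$ with $\mathbf{1}_C^{\top}v=0$: because $C$ is a clique, $(Av)_w=\sum_{u\in C}v_u-v_w=-v_w$ for $w\in C$, and because distinct cells are completely joined or completely non-adjacent, $(Av)_w=0$ for $w\notin C$; thus $Av=-v$. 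Such vectors span $\mathcal{U}^{\perp}$ (one obtains $|C|-1$ independent ones from each cell), so $A|_{\mathcal{U}^{\perp}}=-I$ with $\dim\mathcal{U}^{\perp}=p^nq^m-(\alpha+1)(\beta+1)$. Combining the two $A$-invariant summands yields $\phi_A(\mathcal{P}(G),x)=\phi(T_2,x)\,(x+1)^{\,p^nq^m-(\alpha+1)(\beta+1)}$.

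Finally I would substitute the factorization $\phi(T_2,x)=\phi(T_1,x)(x-pq+p+q)^{(\alpha-1)(\beta-1)}f(x)^{\alpha-1}g(x)^{\beta-1}$ from Lemma \ref{PG:pnqm:T2charpoly}, which is exactly the claimed formula; a degree count ($4+(\alpha-1)(\beta-1)+2(\alpha-1)+2(\beta-1)=(\alpha+1)(\beta+1)=\deg\phi(T_2,x)$, plus the explicit power of $(x+1)$) confirms that both sides have degree $p^nq^m$, so no factor has been lost. The main point needing care is properties (i) and (ii): (i) is immediate for $U_1,V_i,X_i$ and holds for $W_{i,j}$ since its elements are exactly the generators of the cyclic group $\langle(a_i,b_j)\rangle$ and any two generators of a cyclic group are powers of each other, while (ii) is precisely what the $\Gamma$-join structure in Lemma \ref{PG:pnqm:joinform} provides; once these are in place, the rest is bookkeeping with the already-established quotient matrices. (Alternatively one could invoke the adjacency-matrix analogue of Theorem \ref{thm:join}, but the eigenspace argument above is self-contained.)
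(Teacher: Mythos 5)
Your proposal is correct and follows essentially the same route as the paper: both reduce the problem to the factorization $\phi_A(\mathcal{P}(G),x)=\phi(T_2,x)(x+1)^{p^nq^m-(\alpha+1)(\beta+1)}$ coming from the join structure in Lemma \ref{PG:pnqm:joinform}, and then substitute the factorization of $\phi(T_2,x)$ from Lemma \ref{PG:pnqm:T2charpoly}. The only difference is that the paper obtains the $(x+1)$-power by citing Schwenk's theorems on equitable partitions and joins, whereas you prove that step directly via the decomposition $\rr^{p^nq^m}=\mathcal{U}\oplus\mathcal{U}^{\perp}$ and the observation that zero-sum vectors supported on a clique cell are $(-1)$-eigenvectors; your argument is sound and self-contained but not a genuinely different method.
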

\begin{proof}
Since the partition corresponding to $T_2$ is an equitable
partition for $\mathcal{P}(G)$, by Theorem 4 in 
\cite{schwenk74}, $\phi(T_2,x)$ divides $\phi_A(\mathcal{P}(G),x)$. 
From Lemma \ref{PG:pnqm:joinform}, we know that
\[
\mathcal{P}(G) \cong (K_1+\Gamma)[K_1,\underbrace{K_{(p-1)},\dots,K_{(p-1)}}_{\alpha},\underbrace{K_{(p-1)(q-1)},\dots,K_{(p-1)(q-1)}}_{\alpha \beta},\underbrace{K_{(q-1)},\dots,K_{(q-1)}}_{\beta}],
\]
where $\alpha = \frac{p^n-1}{p-1}$ and $\beta = \frac{q^m-1}{q-1}$. By Theorem 7 in \cite{schwenk74}, we have
\begin{equation*}
    \begin{aligned}
    \phi_A(\mathcal{P}(G),x) &=\phi(T_2,x) (x+1)^{(p-2)\alpha} (x+1)^{((p-1)(q-1)-1)\alpha \beta} (x+1)^{(q-2)\beta}\\
    &=\phi(T_2,x) (x+1)^{p^nq^m-(\alpha +1)(\beta+1)}.\\
    \end{aligned}
\end{equation*}
By using Lemma \ref{PG:pnqm:T2charpoly}, the proof is complete.
\end{proof}

\subsection{Distance spectrum of $\mathcal{P}(\El(p^n) \times \El(q^m))$}

Suppose $T := [t_{ij}]$ is the quotient matrix for a graph $\Gamma$ corresponding to the equitable partition $V(\Gamma)=V_1 \cup \dotsc \cup V_m$. Let $T^D:= [t^D_{ij}]$ be the corresponding distance quotient matrix. From (\ref{eqn:T}), it is easy to see that for each $i, j \in [m]$
\begin{equation*}\label{eqn:TD1}
    t^D_{ij}:= \begin{cases}
2|V_i|-2-t_{ii}&~\mbox{if}~i= j,\\
2|V_j|-t_{ij}&~\mbox{otherwise.}  
\end{cases}
\end{equation*} 
In the following lemma, we compute the distance quotient matrices of  $\mathcal{P}(\El(p^n) \times \El(q^m))$ corresponding to the equitable partitions given in Lemma \ref{PG:pnqm:partition1} and Lemma \ref{PG:pnqm:partition2}.
\begin{lemma}
The distance quotient matrices $T^D_1$ and $T^D_2$ of $\mathcal{P}( \El(p^n) \times \El(q^m))$ corresponding to the equitable partition given in Lemma \ref{PG:pnqm:partition1} and Lemma \ref{PG:pnqm:partition2} are
\begin{equation*}
\left[\begin{array}{cccc}
               0& p^n-1 &  (p^n-1)(q^m-1) &  q^m-1\\
     1& 2p^n-p-2 &  (2p^n-p-1)(q^m-1) &  2(q^m-1)\\
     1& 2p^n-p-1 &  2(p^n-1)(q^m-1)-(p-1)(q-1)-1 &  2q^m-q-1\\
     1& 2(p^n-1) &  (p^n-1)(2q^m-q-1) &  2q^m-q-2\\
\end{array}
\right]
\end{equation*}
and 
\begin{equation}\label{PG:pnqm:T2D}
\begin{small}
\left[ \begin{array}{cccc}
      0& (p-1)\1'_\alpha & (p-1)(q-1)\1'_{\gamma} & (q-1) \1'_\beta \\
     
     \1_\alpha& p(2J_\alpha-I_\alpha)-2J_\alpha & (p-1)(q-1)(2J_{\alpha}-I_\alpha) \otimes \1'_\beta & 2(q-1)J_{\alpha,\beta}\\
     
     \1_{\gamma} & (p-1)(2J_{\alpha}-I_\alpha) \otimes \1_\beta& (p-1)(q-1)(2J_\gamma-I_{\gamma})-I_{\gamma}& (q-1)\1_\alpha \otimes (2J_{\beta}-I_\beta)  \\
     
     \1_\beta & 2(p-1)J_{\beta, \alpha} &(p-1)(q-1)  \1'_\alpha \otimes (2J_{\beta}-I_\beta) & q(2J_{\beta}- I_\beta)-2J_{\beta}
\end{array}
\right],
\end{small}    
\end{equation}
respectively, where $\alpha = \frac{p^n-1}{p-1}$, and $\beta = \frac{q^m-1}{q-1}$ and $\gamma = \alpha \beta$.
\end{lemma}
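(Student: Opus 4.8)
The plan is to obtain each distance quotient matrix directly from the corresponding ordinary quotient matrix, using the conversion formula displayed just before the lemma: if $T=[t_{ij}]$ is the quotient matrix for an equitable partition $V(\Gamma)=V_1\cup\dots\cup V_m$ of a graph of diameter at most $2$, then $t^D_{ij}=2|V_j|-t_{ij}$ for $i\neq j$ and $t^D_{ii}=2|V_i|-2-t_{ii}$. Since $\mathcal{P}(\El(p^n)\times\El(q^m))$ has diameter at most $2$ (every vertex is adjacent to the dominating vertex $(e,e)$, so any two vertices are at distance at most $2$), this formula applies, and the only inputs needed are the cardinalities of the blocks and the entries of $T_1$ and $T_2$, both of which are recorded in Lemma \ref{PG:pnqm:T1T2}.

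For $T^D_1$ I would run through the four blocks $V_1,\dots,V_4$ with sizes $1,\,p^n-1,\,(p^n-1)(q^m-1),\,q^m-1$. The first row/column is unchanged up to sign conventions because $V_1=\{(e,e)\}$ is dominating, so each off-diagonal entry $t^D_{1j}=2|V_j|-|V_j|=|V_j|$ and $t^D_{11}=0$; for the remaining entries one substitutes the corresponding entry of $T_1$ into $2|V_j|-t_{ij}$ (off-diagonal) or $2|V_i|-2-t_{ii}$ (diagonal). For instance the $(2,3)$ entry becomes $2(p^n-1)(q^m-1)-(p-1)(q^m-1)=(2p^n-p-1)(q^m-1)$, and the $(3,3)$ entry becomes $2(p^n-1)(q^m-1)-2-\big((p-1)(q-1)-1\big)=2(p^n-1)(q^m-1)-(p-1)(q-1)-1$, matching the claimed matrix; the other entries are verified the same way. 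This part is a routine bookkeeping check and I would simply assert that carrying it out yields the stated $T^D_1$.

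For $T^D_2$ the bookkeeping is heavier because of the Kronecker structure, but the principle is identical: the partition has blocks $U_1$ (size $1$), the $\alpha$ blocks $V_i$ (size $p-1$), the $\gamma=\alpha\beta$ blocks $W_{i,j}$ (size $(p-1)(q-1)$), and the $\beta$ blocks $X_i$ (size $q-1$). I would apply the conversion blockwise. On an off-diagonal block-pair, say the $V$-versus-$W$ block of $T_2$, which is $(p-1)(q-1)\,I_\alpha\otimes\1'_\beta$, the distance version is $2\cdot(p-1)(q-1)\,J_{\alpha,\alpha\beta}-(p-1)(q-1)\,I_\alpha\otimes\1'_\beta=(p-1)(q-1)(2J_\alpha-I_\alpha)\otimes\1'_\beta$, since $J_{\alpha,\alpha\beta}=J_\alpha\otimes\1'_\beta$. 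The diagonal superblocks need the $-2$ correction only on the actual diagonal: e.g. the $V$-superblock of $T_2$ is $(p-2)I_\alpha$, each $V_i$ has size $p-1$, and a vertex of $V_i$ is at distance $2$ from every vertex outside its own block within the $V$-blocks, so the distance superblock is $2(p-1)J_\alpha$ off the diagonal entries and $2(p-1)-2-(p-2)=p-2$ wait — more carefully, $t^D_{ii}=2(p-1)-2-(p-2)=p$ on the diagonal, giving $p(2J_\alpha-I_\alpha)-2J_\alpha$ once one also accounts for the distance-$2$ contributions from sibling $V$-blocks; expanding $p(2J_\alpha-I_\alpha)-2J_\alpha=2(p-1)J_\alpha-pI_\alpha$ shows the diagonal is $2(p-1)-p=p-2$ and the off-diagonal is $2(p-1)$, as required. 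The $W$-superblock $\big((p-1)(q-1)-1\big)I_\gamma$ similarly becomes $(p-1)(q-1)(2J_\gamma-I_\gamma)-I_\gamma$ after the conversion, and the $X$-superblock is handled exactly as the $V$-superblock with $q$ in place of $p$.

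The main obstacle — really the only one — is keeping the Kronecker bookkeeping straight: identifying $J_{\alpha,\beta}$ with $\1_\alpha\otimes\1'_\beta$ and the various mixed blocks with the right tensor factors, and remembering that the $-2$ in the diagonal-block formula subtracts only along the true diagonal, not across sibling blocks of the same type. Once these identifications are made, every entry of \eqref{PG:pnqm:T2D} is an immediate substitution into the displayed conversion formula, so I would present the derivation of a couple of representative blocks and state that the rest follow identically, completing the proof.
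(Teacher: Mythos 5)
Your proposal is correct and matches the paper's (implicit) argument: the lemma is stated without a separate proof precisely because it is the entrywise/blockwise application of the displayed conversion $t^D_{ij}=2|V_j|-t_{ij}$ for $i\neq j$ and $t^D_{ii}=2|V_i|-2-t_{ii}$ to the quotient matrices recorded in Lemma \ref{PG:pnqm:T1T2}, which is legitimate since $(e,e)$ is a dominating vertex and so the diameter is at most $2$. The only blemish is the momentary miscalculation ``$t^D_{ii}=2(p-1)-2-(p-2)=p$'' in your discussion of the $V$-superblock --- the correct value is $p-2$, which is what you in fact verify against the stated matrix $p(2J_\alpha-I_\alpha)-2J_\alpha$ immediately afterwards.
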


Next, we compute the characteristic polynomial of $T^D_2$.
\begin{lemma}\label{PG:pnqm:T2Dcharpoly}
Suppose $T^D_2$ is the matrix given in  \eqref{PG:pnqm:T2D}, then 
\[\phi(T^D_2,x) = \phi(T_1^D,x) (x+(p-1)(q-1)+1)^{(\alpha-1)(\beta-1)} f(x)^{\alpha-1}g(x)^{\beta-1},\]
where $\alpha = \frac{p^n-1}{p-1}$, $\beta = \frac{q^m-1}{q-1}$, and $f(x)$ and $g(x)$ are the characteristic polynomials of the matrices  $\left[\begin{array}{cc}
     -p&  (1-p)(q^m-1)\\
     1-p& (1-p)(q-1)-1
\end{array}\right]$ and $\left[\begin{array}{cc}
     (p-1)(1-q)-1&1-q  \\
     (p^n-1)(1-q)& -q
\end{array}\right]$, respectively.
\end{lemma}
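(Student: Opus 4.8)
The plan is to mimic exactly the proof of Lemma \ref{PG:pnqm:T2charpoly}, replacing the adjacency quotient matrices by the distance quotient matrices throughout. The key structural fact is that $T^D_2$ has the same block ``$\otimes \1$'' pattern as $T_2$: whenever $T_2$ contained a block $M \otimes \1'_\beta$ the matrix $T^D_2$ contains $M' \otimes \1'_\beta$ for an appropriate $M'$, and likewise the ``$-I$'' diagonal pieces are preserved. This is precisely the reason $\phi(T^D_1,x)$ divides $\phi(T^D_2,x)$ and the three extra families of eigenvectors go through. So first I would record, using \eqref{eqn:TD1}, that $T^D_1$ and $T^D_2$ are obtained from $T_1$ and $T_2$ by the substitution $t_{ij}\mapsto 2|V_j|-t_{ij}$ (off-diagonal) and $t_{ii}\mapsto 2|V_i|-2-t_{ii}$ (diagonal); this makes the block shapes in \eqref{PG:pnqm:T2D} transparent.

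\textbf{Step 1 (lifting eigenvectors of $T^D_1$).} If $(x_1,x_2,x_3,x_4)'$ is an eigenvector of $T^D_1$ with eigenvalue $\lambda$, then $y=(x_1,\,x_2\1'_\alpha,\,x_3\1'_{\alpha\beta},\,x_4\1'_\beta)'$ is an eigenvector of $T^D_2$ with the same eigenvalue; the verification is identical to the one in Lemma \ref{PG:pnqm:T2charpoly} because the row sums of each block of $T^D_2$ restricted to a constant-on-parts vector reproduce the corresponding entry of $T^D_1$ (this is exactly what an equitable partition guarantees, and the partition of Lemma \ref{PG:pnqm:partition2} refines that of Lemma \ref{PG:pnqm:partition1}). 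As before, linear independence of the lifted vectors follows from that of the originals, so $\phi(T^D_1,x)\mid\phi(T^D_2,x)$.

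\textbf{Step 2 (the three extra eigenvector families).} With $v^i\in\rr^\alpha$, $w^j\in\rr^\beta$ defined exactly as in Lemma \ref{PG:pnqm:T2charpoly} (so that $\1'_\alpha v^i=0$, $\1'_\beta w^j=0$): the vector $(0,\0'_\alpha,(v^i\otimes w^j)',\0'_\beta)'$ is killed by every block of $T^D_2$ except the central one, on which $(p-1)(q-1)(2J_\gamma-I_\gamma)-I_\gamma$ acts as $-(p-1)(q-1)-1$ on $v^i\otimes w^j$ since $(2J_\gamma-I_\gamma)(v^i\otimes w^j)=-(v^i\otimes w^j)$; this gives the eigenvalue $-(p-1)(q-1)-1$ with multiplicity $(\alpha-1)(\beta-1)$. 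Next, for $B=\begin{bmatrix}-p&(1-p)(q^m-1)\\1-p&(1-p)(q-1)-1\end{bmatrix}$ and an eigenpair $(\lambda,(x_1,x_2))$, set $z^i=(0,\,x_1 v^i,\,x_2(v^i\otimes\1_\beta),\,\0_\beta)'$; using $\1'_\alpha v^i=0$ the top and bottom blocks vanish, $(2J_\alpha-I_\alpha)v^i=-v^i$, $(2J_\alpha-I_\alpha)\otimes\1_\beta$ applied to $v^i$ is $-v^i\otimes\1_\beta$, etc., and the middle-pair equations collapse to $Bx=\lambda x$, so each eigenvalue of $B$ lifts with multiplicity $\alpha-1$. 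Symmetrically, $C=\begin{bmatrix}(p-1)(1-q)-1&1-q\\(p^n-1)(1-q)&-q\end{bmatrix}$ contributes multiplicity $\beta-1$ via $\widetilde z^j=(0,\0_\alpha,x_1(\1_\alpha\otimes w^j),x_2 w^j)'$. The key bookkeeping is that these $B$ and $C$ are precisely the $2\times2$ blocks one gets from $T^D_1$ by deleting the first row/column and the $V_4$ resp.\ $V_2$ row/column, with the distance substitution applied — this is the step where I would double-check signs carefully, since $-p$, $1-p$, $1-q$, $-q$ differ from the adjacency versions $p-2$, $p-1$, $q-1$, $q-2$ only through that substitution and an off-by-one in the ``$-1$'' term.

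\textbf{Step 3 (counting).} The families $S_1,S_2,S_3,S_4$ are mutually linearly independent by their disjoint support patterns (constant-on-parts vs.\ the three ``difference'' patterns), exactly as argued for $T_2$, and they total $4+(\alpha-1)(\beta-1)+2(\alpha-1)+2(\beta-1)=1+\alpha+\alpha\beta+\beta$, the full dimension. Hence
\[\phi(T^D_2,x)=\phi(T^D_1,x)\,(x+(p-1)(q-1)+1)^{(\alpha-1)(\beta-1)}\,f(x)^{\alpha-1}g(x)^{\beta-1},\]
where $f=\phi(B,\cdot)$ and $g=\phi(C,\cdot)$. The main obstacle is genuinely just the sign/constant bookkeeping in identifying $B$ and $C$ and the central eigenvalue after the distance substitution; the linear-algebra scaffolding is already fully in place from Lemma \ref{PG:pnqm:T2charpoly}, so I would state that the verifications are ``entirely analogous'' and only display the computation $(2J-I)$ acting on the difference vectors plus the final dimension count.
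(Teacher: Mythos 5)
Your proposal is correct and follows essentially the same route as the paper: lift the eigenvectors of $T^D_1$ via the refinement of partitions, extract the eigenvalue $-((p-1)(q-1)+1)$ from the vectors $v^i\otimes w^j$, and obtain the remaining eigenvalues from the $2\times 2$ matrices $B$ and $C$ via the vectors $z^i$ and $\widetilde z^j$, exactly as in Lemma \ref{PG:pnqm:T2charpoly}. The paper's own proof is in fact terser than yours, deferring all three verifications to the earlier lemma, so your explicit sign bookkeeping and dimension count only add detail.
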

\begin{proof}
As in Lemma  \ref{PG:pnqm:T2charpoly},  it is easy to see that $\phi(T^D_1,x)$ divides $\phi(T^D_2,x)$. Let $S_1$ be the set of eigenvectors of $T^D_2$ formed from the linear independent eigenvectors of $T^D_1$. For $i \in \{1,\dotsc,\alpha-1\}$ and $j \in \{1,\dotsc,\beta-1\}$, let $v^i\in \rr^{\alpha}$ and $w^j\in \rr^{\beta}$ be the vectors defined in \eqref{PG:pnqm:T2evec}. Now, for each $i \in \{1,\dotsc,\alpha-1\}$ and $j \in \{1,\dotsc,\beta-1\}$
\begin{equation*}
    \begin{aligned}
    T_2^D \left[\begin{array}{c}
    0  \\
    \0_\alpha\\
    v^i\otimes w^j\\
\0_\beta
\end{array}
\right]
&=\left[ \begin{array}{c}
     (p-1)(q-1)\1'_{\gamma} ( v^i\otimes w^j)\\
     ((p-1)(q-1)(2J_{\alpha}-I_\alpha) \otimes \1'_\beta)( v^i\otimes w^j)\\
     ((p-1)(q-1)(2J_\gamma-I_{\gamma})-I_{\gamma})( v^i\otimes w^j)\\
     ((p-1)(q-1)  \1'_\alpha \otimes (2J_{\beta}-I_\beta)) ( v^i\otimes w^j)
\end{array}
\right]\\ &=-((p-1)(q-1)+1)\left[ \begin{array}{c}
    0  \\
    \0_\alpha\\
    v^i\otimes w^j\\
    \0_\beta
\end{array}
\right].
    \end{aligned}
\end{equation*}
Since $S_2= \{(0 ,
    \0'_\alpha,
    v^i\otimes w^j,
    \0_\beta')': 1 \leq i \leq\alpha-1, 1 \leq j \leq \beta-1 \}$ is a linearly independent set, $-((p-1)(q-1)+1)$ is an eigenvalue of $T_2^D$ with multiplicity $(\alpha-1)(\beta-1)$. Clearly, $S_1 \cup S_2$ is a set of linearly independent eigenvectors of $T_2$.  Now consider the following matrices
\[B=\left[\begin{array}{cc}
     -p&  (1-p)(q^m-1)\\
     1-p& (1-p)(q-1)-1
\end{array}\right]~\mbox{and}~ C=\left[\begin{array}{cc}
     (p-1)(1-q)-1&1-q  \\
     (p^n-1)(1-q)& -q
\end{array}\right].\]
Suppose $f(x)$ and $g(x)$ are the characteristic polynomials of the matrices $B$ and $C$, respectively. As in Lemma \ref{PG:pnqm:T2charpoly}, it is easy to prove that $f(x)$ and $g(x)$ divides $\phi(T^D_2,x)$ such that 
\[\phi(T_2^D,x) = \phi(T_1^D,x) (x+(p-1)(q-1)+1)^{(\alpha-1)(\beta-1)} f(x)^{\alpha-1}g(x)^{\beta-1}.\]
This completes the proof.
\end{proof}
In the following theorem, we compute the characteristic polynomial of the distance matrix of $\mathcal{P}(\El(p^n) \times \El(q^m))$.
\begin{theorem}\label{PG:pnqm:DM:charpoly}
Suppose $G=\El(p^n) \times \El(q^m)$. If $D$ is the distance matrix of $\mathcal{P}(G)$, then
\begin{small}
    \begin{equation*}
        \phi_D(\mathcal{P}(G),x)= \phi(T_1^D,x) (x+1)^{p^nq^m-(\alpha +1)(\beta+1)} (x+(p-1)(q-1)+1)^{(\alpha-1)(\beta-1)} f(x)^{\alpha-1}g(x)^{\beta-1},
    \end{equation*}
\end{small}where $\alpha = \frac{p^n-1}{p-1}$, $\beta = \frac{q^m-1}{q-1}$, and $f(x)$ and $g(x)$ are the characteristic polynomials of the matrices   $\left[\begin{array}{cc}
     -p&  (1-p)(q^m-1)\\
     1-p& (1-p)(q-1)-1
\end{array}\right]$ and $\left[\begin{array}{cc}
     (p-1)(1-q)-1&1-q  \\
     (p^n-1)(1-q)& -q
\end{array}\right]$, respectively.
\end{theorem}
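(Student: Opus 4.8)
The plan is to follow the proof of Theorem~\ref{PG:pnqm:AM:charpoly} almost verbatim, replacing Schwenk's adjacency $\Gamma$-join formula by the distance $\Gamma$-join formula of Theorem~\ref{thm:join}, and replacing Lemma~\ref{PG:pnqm:T2charpoly} by its distance counterpart Lemma~\ref{PG:pnqm:T2Dcharpoly}.

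First I would record that $\mathcal{P}(G)$ has diameter at most $2$: the identity $(e,e)$ is a power of every element, hence a dominating vertex, so every pair of vertices is joined by a path of length at most $2$ through it. By Lemma~\ref{PG:pnqm:joinform},
\[
\mathcal{P}(G)\cong (K_1+\Gamma)[K_1,\underbrace{K_{p-1},\dots,K_{p-1}}_{\alpha},\underbrace{K_{(p-1)(q-1)},\dots,K_{(p-1)(q-1)}}_{\alpha\beta},\underbrace{K_{q-1},\dots,K_{q-1}}_{\beta}],
\]
and the partition of $V(\mathcal{P}(G))$ into the blocks of this join is precisely the equitable partition $P$ of Lemma~\ref{PG:pnqm:partition2}, whose distance quotient matrix is the matrix $T_2^D$ of \eqref{PG:pnqm:T2D}. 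The base graph $K_1+\Gamma$ is connected and has a dominating vertex, hence has diameter at most $2$, and each factor is a complete graph, hence is regular of diameter at most $1$; thus the hypotheses of Theorem~\ref{thm:join} are met.

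Applying Theorem~\ref{thm:join} then gives $\phi_D(\mathcal{P}(G),x)=\phi(T_2^D,x)\prod_i \phi_D(\Gamma_i,x)/(x-2n_i+2+r_i)$, where the product runs over the $1+\alpha+\alpha\beta+\beta$ factors. Since $D(K_t)=J_t-I_t$ has characteristic polynomial $(x-t+1)(x+1)^{t-1}$ and the denominator $x-2n_i+2+r_i$ equals $x-t+1$ for the factor $K_t$ (and equals $x$ for $K_1$), each quotient is $(x+1)^{n_i-1}$ (and $1$ for $K_1$). Collecting exponents, the product equals $(x+1)^{(p-2)\alpha+((p-1)(q-1)-1)\alpha\beta+(q-2)\beta}$, and using $1+(p-1)\alpha+(p-1)(q-1)\alpha\beta+(q-1)\beta=p^nq^m$ together with $(\alpha+1)(\beta+1)=\alpha\beta+\alpha+\beta+1$, this exponent simplifies to $p^nq^m-(\alpha+1)(\beta+1)$.

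Finally, substituting the factorization of Lemma~\ref{PG:pnqm:T2Dcharpoly}, namely $\phi(T_2^D,x)=\phi(T_1^D,x)\,(x+(p-1)(q-1)+1)^{(\alpha-1)(\beta-1)} f(x)^{\alpha-1}g(x)^{\beta-1}$ with $f$ and $g$ the $2\times 2$ characteristic polynomials displayed in the statement, yields the claimed formula; a degree count, $4+p^nq^m-(\alpha+1)(\beta+1)+(\alpha-1)(\beta-1)+2(\alpha-1)+2(\beta-1)=p^nq^m$, provides a consistency check. The only point that needs care is the verification that Theorem~\ref{thm:join} applies---that both the base graph and all factors have diameter at most $2$ and all factors are regular---after which the argument is pure bookkeeping, the substantive work having been done in Lemma~\ref{PG:pnqm:T2Dcharpoly}; I do not anticipate a genuine obstacle.
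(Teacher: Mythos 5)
Your proposal is correct and matches the paper's (much terser) argument: the paper likewise applies Theorem~\ref{thm:join} to the join decomposition of Lemma~\ref{PG:pnqm:joinform}, using the fact that the adjacency and distance matrices of a complete graph coincide so that the $(x+1)$-exponent bookkeeping carries over from the adjacency case, and then substitutes the factorization of $\phi(T_2^D,x)$ from Lemma~\ref{PG:pnqm:T2Dcharpoly}. Your explicit verification of the diameter and regularity hypotheses and the degree check are sound additions but do not change the route.
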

\begin{proof}
Since the adjacency  and distance matrix of a complete graph is the same, using Theorem \ref{thm:join} along with Theorem \ref{PG:pnqm:AM:charpoly}, we get the desired result.
\end{proof}

We now discuss the adjacency and distance spectrum of the enhanced power graph of $\El(p^n) \times \El(q^m)$. We first need the following lemmas.

\begin{lemma}\label{EPG:pnqm:partition1}
The partition given in Lemma \ref{PG:pnqm:partition1} is an equitable partition for the enhanced power graph of $\El(p^n) \times \El(q^m)$.
\end{lemma}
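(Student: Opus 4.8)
The plan is to show that for the enhanced power graph $\mathcal{G}_E(G)$ with $G=\El(p^n)\times\El(q^m)$, each of the four sets $V_1,V_2,V_3,V_4$ from Lemma \ref{PG:pnqm:partition1} satisfies the defining property of an equitable partition, namely that $|N(u)\cap V_j|$ is independent of the choice of $u\in V_i$. Since $\mathcal{G}_E(G)$ has the same vertex set $G$ as $\mathcal{P}(G)$, and differs only in its edge set, the only thing to do is to recompute the neighbourhood counts with the enhanced-power adjacency rule in place of the power-graph rule. I would first record the key structural fact for enhanced power graphs of this group: two elements $(a_1,b_1),(a_2,b_2)\in G$ are adjacent in $\mathcal{G}_E(G)$ if and only if they both lie in some cyclic subgroup $\langle(c,d)\rangle$; since every nontrivial cyclic subgroup of $\El(p^n)\times\El(q^m)$ has order $1$, $p$, $q$, or $pq$, and such a subgroup is generated by an element $(c,d)$ with $\langle c\rangle$ of order $1$ or $p$ and $\langle d\rangle$ of order $1$ or $q$, adjacency is governed by exactly which of the two coordinates are trivial. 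This is the enhanced-power analogue of equation \eqref{PG:pnqm:observation}.

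From this I would derive a clean adjacency criterion on the blocks: $(e,e)$ is dominating; a vertex in $V_2$ (first coordinate nontrivial, second trivial) is adjacent precisely to all of $V_1$, all other vertices of $V_2$ whose first coordinate lies in the \emph{same} order-$p$ subgroup — wait, here is where enhanced differs from power: in $\El(p^n)$ every two distinct nontrivial elements generate \emph{distinct} order-$p$ subgroups, so two elements of $V_2$ are enhanced-power-adjacent iff they generate the same cyclic subgroup, i.e. iff one is a power of the other; hence within $\El(p^n)$ (which has exponent $p$) $\mathcal{P}$ and $\mathcal{G}_E$ already coincide. The genuinely new edges in $\mathcal{G}_E(G)$ versus $\mathcal{P}(G)$ are between $V_2$ and $V_4$: $(a,e)$ and $(e,b)$ both lie in $\langle(a,b)\rangle\cong\mathbb{Z}_{pq}$, so they are adjacent in $\mathcal{G}_E$, whereas they are non-adjacent in $\mathcal{P}$. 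I would then go block by block — $u\in V_2$, $u\in V_3$, $u\in V_4$ — and count $|N(u)\cap V_j|$ for $j=1,2,3,4$, verifying in each case that the count does not depend on $u$. For $V_2$: $|N(u)\cap V_1|=1$, $|N(u)\cap V_2|=p-2$, $|N(u)\cap V_3|=(p-1)(q^m-1)$ (all of $V_3$, since $(a,e)$ and $(a',b')$ co-lie in $\langle(a',b')\rangle$ when $a\in\langle a'\rangle$... more care: one needs $a\in\langle a'\rangle$, giving $|N(u)\cap V_3|=(p-1)(q^m-1)$ when $p=$ exponent forces $\langle a'\rangle$ to contain $a$ iff $a'\in\langle a\rangle$; the count still comes out constant), and crucially $|N(u)\cap V_4|=q^m-1$, the all-of-$V_4$ value, replacing the $0$ in Lemma \ref{PG:pnqm:partition1}. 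By the symmetric argument for $V_4$, and by checking $V_3$ (where the counts are the same as in the power-graph case because any $V_3$-vertex $(a,b)$ already sees all of $V_2$ and all of $V_4$ in $\mathcal{P}$ too), one obtains constancy of all the block degrees.

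The main obstacle — and it is a bookkeeping obstacle rather than a conceptual one — is getting the $V_3$ counts exactly right: a vertex $u=(a,b)\in V_3$ generates $\langle(a,b)\rangle$ of order $pq$, which contains exactly one order-$p$ subgroup $\langle(a,e)\rangle$ and one order-$q$ subgroup $\langle(e,b)\rangle$, so $u$ is adjacent to the $p-1$ elements of $\langle(a,e)\rangle^*$ in $V_2$, the $q-1$ elements of $\langle(e,b)\rangle^*$ in $V_4$, the $(p-1)(q-1)-1$ other nontrivial elements of $\langle(a,b)\rangle$ in $V_3$, but possibly to \emph{other} $V_3$ vertices $(a',b')$ as well — precisely when $(a,b)$ and $(a',b')$ co-lie in a common cyclic subgroup, which forces $\langle(a,b)\rangle=\langle(a',b')\rangle$ since both have order $pq$ and a $\mathbb{Z}_{pq}$ has a unique subgroup of each order; so there are no such extra edges, and $|N(u)\cap V_3|=(p-1)(q-1)-1$ independent of $u$. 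Once these four block-degree vectors are pinned down and seen to be independent of the representative, the partition is equitable by the definition adopted in item (6) of Section \ref{sec:Prelim}, completing the proof. I would present the final counts in a displayed list mirroring the structure of the proof of Lemma \ref{PG:pnqm:partition1}, only flagging the single sign-of-change entry $|N(u)\cap V_4|=q^m-1$ for $u\in V_2$ (and its mirror) that distinguishes the enhanced power graph.
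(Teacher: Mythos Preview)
Your proposal is correct and follows essentially the same route as the paper: observe that passing from $\mathcal{P}(G)$ to $\mathcal{G}_E(G)$ adds precisely all edges between $V_2$ and $V_4$ (and no others), update those two block-degree entries to $q^m-1$ and $p^n-1$, and invoke Lemma~\ref{PG:pnqm:partition1} for the remaining counts. One small slip to fix in the write-up: you say a $V_3$-vertex ``already sees all of $V_2$ and all of $V_4$ in $\mathcal{P}$ too'', but in fact it sees only $p-1$ elements of $V_2$ and $q-1$ of $V_4$ --- your detailed recount in the following paragraph gets this right, so the conclusion is unaffected.
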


\begin{proof}
We recall that $u$ is adjacent to $v$ in the enhanced power graph of a group $G$ (see definition \ref{defn: enhcdpowr graph}) if $u,v\in \langle w \rangle$ for some $w\in G$. Let $G=\El(p^n) \times \El(q^m)$. Observe that 
$(a_1,b_1)$ and $(a_2,b_2)$ are adjacent in $\mathcal{G}_E(G)$, if and only if $\langle a_1,a_2 \rangle$ and  $\langle b_1,b_2 \rangle$ are cyclic in $\El(p^n)$ and $\El(q^m)$, respectively. Thus,
\[|N(u)\cap V_4|=q^m-1~\mbox{for}~u \in V_2\]
and
\[|N(u) \cap V_2| = p^n-1~\mbox{for}~u \in V_4.\]
Moreover, except the edges between the elements of $V_2$ and $V_4$, there is no edge (other than the edges of $\mathcal{P}(G)$) in $\mathcal{G}_E(G)$.
Now, by using Lemma \ref{PG:pnqm:partition1}, the proof is complete.
\end{proof}

\begin{lemma}\label{EPG:pnqm:partition2}
The partition given in Lemma \ref{PG:pnqm:partition2} is an equitable partition for the enhanced power graph of $\El(p^n) \times \El(q^m)$.
\end{lemma}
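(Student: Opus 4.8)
The plan is to mimic the argument of Lemma~\ref{EPG:pnqm:partition1} but now for the finer partition $P$ of Lemma~\ref{PG:pnqm:partition2}. First I would recall the characterization established in the proof of Lemma~\ref{EPG:pnqm:partition1}: in $\mathcal{G}_E(G)$ with $G=\El(p^n)\times\El(q^m)$, two vertices $(a_1,b_1)$ and $(a_2,b_2)$ are adjacent if and only if $\langle a_1,a_2\rangle$ is cyclic in $\El(p^n)$ and $\langle b_1,b_2\rangle$ is cyclic in $\El(q^m)$; since in an elementary abelian $r$-group a $2$-generated subgroup is cyclic exactly when the two generators lie in a common subgroup of order $r$ (equivalently one is trivial or they generate the same order-$r$ subgroup), this says precisely that $a_1,a_2$ are both trivial or lie in the same order-$p$ subgroup, and similarly for $b_1,b_2$. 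The key structural point is then that the only edges of $\mathcal{G}_E(G)$ that are not edges of $\mathcal{P}(G)$ are exactly those joining $V_2=\bigcup_i V_i$ to $V_4=\bigcup_j X_j$, and in fact every vertex of $\bigcup_i V_i$ is adjacent in $\mathcal{G}_E(G)$ to every vertex of $\bigcup_j X_j$ (take $a\in\El(p^n)^*$, $b\in\El(q^m)^*$; then $(a,e)$ and $(e,b)$ both lie in $\langle(a,b)\rangle$).

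Next I would verify the equitability of $P$ block by block, starting from the neighbourhood counts already recorded in the proof of Lemma~\ref{PG:pnqm:partition2} and adding only the contribution of these new $V_i$--$X_j$ edges. Concretely: $U_1$ is still dominating, so nothing changes there. For $u\in V_i$ the only change is $|N(u)\cap X_{j'}|=q-1$ for every $j'\in[\beta]$ (whereas it was $0$ in $\mathcal{P}(G)$), while all counts against $U_1$, the $V_{i'}$ and the $W_{i',j'}$ are unchanged. Symmetrically, for $u\in X_j$ we now get $|N(u)\cap V_{i}|=p-1$ for every $i\in[\alpha]$, all other counts unchanged. For $u\in W_{i,j}$ nothing changes at all, since a vertex $(a,b)$ with both coordinates nontrivial already has, inside $\mathcal{P}(G)$, all of $V_i$ and all of $X_j$ as neighbours and no new enhanced-power edges touch $W$-blocks that were not already present. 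In each case the count against every block of $P$ depends only on which block $u$ lies in (and, for the $V_{i'},W_{i',j'},X_{j'}$ blocks, on whether the indices match $u$'s), so the partition is equitable for $\mathcal{G}_E(G)$.

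The main obstacle — really the only thing requiring care rather than bookkeeping — is justifying that there are no \emph{other} new edges: that an enhanced-power edge joining a $W$-block to anything, or joining two distinct $V$-blocks, or a $V$-block to a $W$-block not sharing its first index, is impossible. This follows from the cyclicity criterion above: if $(a_1,b_1)\sim(a_2,b_2)$ in $\mathcal{G}_E(G)$ then $a_1,a_2$ lie in a common cyclic (hence order $\le p$) subgroup of $\El(p^n)$ and likewise $b_1,b_2$ in a common order $\le q$ subgroup of $\El(q^m)$, which forces the two vertices either to coincide on a coordinate-block pattern already accounted for in $\mathcal{P}(G)$ or to be one-in-$\bigcup V_i$-and-one-in-$\bigcup X_j$. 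I would state this explicitly and then simply cite Lemma~\ref{PG:pnqm:partition2} for the baseline counts, so the proof reduces to "same counts as in Lemma~\ref{PG:pnqm:partition2}, plus the uniform $V_i$--$X_j$ adjacency, and these still yield an equitable partition."
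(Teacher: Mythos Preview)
Your proposal is correct and follows essentially the same route as the paper: the paper's proof is the one-line ``follows from Lemma~\ref{PG:pnqm:partition2} and Lemma~\ref{EPG:pnqm:partition1},'' and what you have written is precisely the unpacking of that sentence, namely that the only edges of $\mathcal{G}_E(G)$ absent from $\mathcal{P}(G)$ are the complete bipartite $V_i$--$X_j$ edges identified in Lemma~\ref{EPG:pnqm:partition1}, and these add a uniform count to the block intersections already computed in Lemma~\ref{PG:pnqm:partition2}.
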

\begin{proof}
The proof follows from Lemma \ref{PG:pnqm:partition2} and Lemma \ref{EPG:pnqm:partition1}.
\end{proof}
The next result follows directly from Lemma \ref{EPG:pnqm:partition2}.
\begin{lemma}
Let $G=\El(p^n) \times \El(q^m)$. Suppose $\alpha = \frac{p^n-1}{p-1}$ and $\beta = \frac{q^m-1}{q-1}$. Then,
\begin{small}
    \begin{equation*}
\mathcal{G}_E(G) \cong (K_1+\Gamma')[K_1,\underbrace{K_{(p-1)},\dots,K_{(p-1)}}_{\alpha},\underbrace{K_{(p-1)(q-1)},\dots,K_{(p-1)(q-1)}}_{\alpha \beta},\underbrace{K_{(q-1)},\dots,K_{(q-1)}}_{\beta}],        
    \end{equation*}
\end{small}where $\alpha = \frac{p^n-1}{p-1}$, $\beta = \frac{q^m-1}{q-1}$ and $\Gamma'$ is the graph with vertex set $V(\Gamma)$ and edge set $E(\Gamma) \cup \{\{i,\alpha+\alpha \beta+j\}: 1 \leq i \leq \alpha, 1 \leq j \leq \beta\}$, where $\Gamma$ is given in Figure \ref{fig}.
\end{lemma}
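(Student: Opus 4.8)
The plan is to read the claimed decomposition off the equitable partition
$P=\{U_1, V_1, \dotsc, V_\alpha, W_{1,1}, \dotsc , W_{\alpha,\beta}, X_1, \dotsc, X_\beta\}$ of Lemma \ref{PG:pnqm:partition2}, which by Lemma \ref{EPG:pnqm:partition2} is also equitable for $\mathcal{G}_E(G)$. A graph with a fixed vertex partition is a $\widehat{\Gamma}$-join of complete graphs (with parts equal to the blocks) precisely when each block induces a complete subgraph and, between any two distinct blocks, the induced bipartite subgraph is either complete or empty; the base graph $\widehat{\Gamma}$ then records which pairs of blocks are completely joined. So I would first verify these two structural facts for $P$ inside $\mathcal{G}_E(G)$, and then identify the resulting base graph with $K_1+\Gamma'$.

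Both structural facts follow from the adjacency criterion recalled in Lemma \ref{EPG:pnqm:partition1}: $(a_1,b_1)$ and $(a_2,b_2)$ are adjacent in $\mathcal{G}_E(G)$ if and only if $\langle a_1,a_2\rangle$ is cyclic in $\El(p^n)$ and $\langle b_1,b_2\rangle$ is cyclic in $\El(q^m)$, together with the elementary fact that in an elementary abelian $r$-group a two-generated subgroup is cyclic exactly when the two generators lie in a common subgroup of order $r$, or one of them is the identity. The ``each block is complete'' claim is then immediate, since $V_i\subseteq\langle(a_i,e)\rangle$, $X_j\subseteq\langle(e,b_j)\rangle$, and $W_{i,j}\subseteq\langle(a_i,b_j)\rangle$, each a single cyclic subgroup (of order $p$, $q$, and $pq$ respectively, the last because $\gcd(p,q)=1$), so any two vertices of a given block lie in a common cyclic subgroup and are adjacent. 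For the all-or-nothing claim, observe that whether $(a_1,b_1)\sim(a_2,b_2)$ depends only on which order-$p$ subgroup (if any) contains the first coordinates and which order-$q$ subgroup (if any) contains the second coordinates, and by the very definitions of $U_1$, $V_i$, $W_{i,j}$, $X_j$ this data is constant on each block of $P$; hence adjacency between two distinct blocks is all-or-nothing.

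It remains to compute $\widehat{\Gamma}$. The block $U_1=\{(e,e)\}$ is a dominating vertex, which produces the leading $K_1$ in $K_1+\Gamma'$. For the other blocks the case check gives: distinct $V_i$'s are non-adjacent, distinct $X_j$'s are non-adjacent, and distinct $W_{i,j}$'s are non-adjacent (each time because two distinct subgroups of prime order generate a non-cyclic group); $V_i$ is completely joined to $W_{i',j}$ iff $i=i'$, and $X_j$ to $W_{i,j'}$ iff $j=j'$; and finally $V_i$ is completely joined to $X_j$ for \emph{every} pair $(i,j)$, because $(a_i^k,e)$ and $(e,b_j^\ell)$ both lie in the cyclic subgroup $\langle(a_i^k,b_j^\ell)\rangle$ of order $pq$. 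Comparing this pattern with the graph $\Gamma$ of Figure \ref{fig} and with the decomposition of $\mathcal{P}(G)$ in Lemma \ref{PG:pnqm:joinform}, every adjacency coincides with that of $K_1+\Gamma$ except the last: in the power graph $V_i$ and $X_j$ are not joined. Hence $\widehat{\Gamma}=K_1+\Gamma'$, where $\Gamma'$ is $\Gamma$ with the edges $\{i,\alpha+\alpha\beta+j\}$, $1\le i\le\alpha$, $1\le j\le\beta$, adjoined, and listing the blocks of $P$ in the order $U_1$, then $V_1,\dotsc,V_\alpha$, then the $W_{i,j}$, then $X_1,\dotsc,X_\beta$ yields exactly the stated isomorphism.

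The one substantive new ingredient relative to the power-graph computation is the adjacency $V_i\sim X_j$, where coprimality of $p$ and $q$ is what lets $(a_i^k,e)$ and $(e,b_j^\ell)$ sit in a common cyclic subgroup; beyond that the work is a careful bookkeeping match against Lemma \ref{PG:pnqm:joinform} and Figure \ref{fig}, and I expect the main thing to get right to be keeping the vertex labels of $\Gamma$ aligned with the blocks of $P$, rather than any conceptual difficulty.
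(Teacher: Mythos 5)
Your proof is correct and follows the route the paper intends: the paper states this lemma without proof, declaring it a direct consequence of the equitable partition of Lemma \ref{EPG:pnqm:partition2}, and your argument supplies exactly the verification that is being treated as immediate. You are in fact slightly more careful than the paper, since you correctly observe that the join decomposition requires the complete-or-empty property between blocks (not merely equitability) and you isolate the one genuinely new adjacency $V_i \sim X_j$, which rests on the coprimality of $p$ and $q$.
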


\subsection{Adjacency spectrum of $\mathcal{G}_E(\El(p^n) \times \El(q^m))$}\label{sec:EPG:pnqm:AM}
From Lemma \ref{EPG:pnqm:partition1}, and  Lemma \ref{EPG:pnqm:partition2}, the following result is immediate. 
\begin{lemma}\label{EPG:pnqm:T1T2}
    The quotient matrices $T_1$ and $T_2$ of $\mathcal{G}_E( \El(p^n) \times \El(q^m))$ corresponding to the equitable partitions given in Lemma \ref{PG:pnqm:partition1} and Lemma \ref{PG:pnqm:partition2}, respectively are
\[T_1= \left[\begin{array}{cccc}
     0& p^n-1 &  (p^n-1)(q^m-1) &  q^m-1\\
     1& p-2 &  (p-1)(q^m-1) &  q^m-1\\
     1& p-1 &  (p-1)(q-1)-1 &  q-1\\
     1& p^n-1 &  (p^n-1)(q-1) &  q-2\\
\end{array}
\right]\]
and 
\begin{equation}\label{EPG:pnqm:T2}
    T_2=\left[ \begin{array}{cccc}
     0& (p-1)\1'_\alpha & (p-1)(q-1)\1'_{\alpha \beta} & (q-1) \1'_\beta \\
     \1_\alpha& (p-2)I_\alpha & (p-1)(q-1)I_\alpha \otimes \1'_\beta & (q-1)\1_\alpha \1'_\beta\\
     \1_{\alpha \beta} & (p-1) I_\alpha \otimes \1_\beta & ((p-1)(q-1)-1)I_{\alpha \beta} & (q-1)\1_\alpha \otimes I_\beta  \\
     \1_\beta & (p-1)\1_\beta \1'_\alpha & (p-1)(q-1) \1'_\alpha \otimes I_\beta & (q-2) I_\beta
\end{array}
\right],
\end{equation}
where $\alpha = \frac{p^n-1}{p-1}$, and $\beta = \frac{q^m-1}{q-1}$.
\end{lemma}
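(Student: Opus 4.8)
The plan is to exploit that both partitions are already known to be equitable for $\mathcal{G}_E(G)$ with $G = \El(p^n) \times \El(q^m)$, by Lemmas \ref{EPG:pnqm:partition1} and \ref{EPG:pnqm:partition2}. Consequently, the entry of the quotient matrix in the cell-position $(i,j)$ is just $|N(u) \cap V_j|$ for an arbitrary vertex $u$ in the $i$-th cell, so the whole task reduces to counting neighbours across cells. Rather than recount everything from scratch, I would compare $\mathcal{G}_E(G)$ with $\mathcal{P}(G)$, whose quotient matrices $T_1$ and $T_2$ are already recorded in Lemma \ref{PG:pnqm:T1T2}.

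The key step is the observation, essentially contained in the proof of Lemma \ref{EPG:pnqm:partition1}, that the edge set of $\mathcal{G}_E(G)$ is obtained from that of $\mathcal{P}(G)$ by adding exactly the edges joining every element of $\{(a,e) : a \in \El(p^n)^*\}$ to every element of $\{(e,b) : b \in \El(q^m)^*\}$; these new edges form a complete bipartite graph between the two sets, since for $w = (a,b)$ one has $\langle w \rangle = \langle a \rangle \times \langle b \rangle$ when $\gcd(p,q) = 1$. In the language of Lemma \ref{PG:pnqm:partition1} this is precisely the set of $V_2$--$V_4$ edges, while in the language of Lemma \ref{PG:pnqm:partition2} it is the union of all $V_i$--$X_j$ edges. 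Granting this, every within-cell count and every between-cell count not involving such a pair is unchanged from $\mathcal{P}(G)$, so those entries of $T_1$ and $T_2$ may be copied verbatim from Lemma \ref{PG:pnqm:T1T2}. It then remains only to record the affected entries: in $T_1$ the $(2,4)$ entry becomes $|V_4| = q^m - 1$ and the $(4,2)$ entry becomes $|V_2| = p^n - 1$; in $T_2$ the $V$--$X$ block becomes $(q-1)\1_\alpha\1'_\beta$ because $|X_j| = q - 1$, and the $X$--$V$ block becomes $(p-1)\1_\beta\1'_\alpha$ because $|V_i| = p - 1$. Substituting these into the matrices of Lemma \ref{PG:pnqm:T1T2} produces the stated $T_1$ and the matrix $T_2$ in \eqref{EPG:pnqm:T2}.

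I do not expect a genuinely hard step here; this is why the lemma is stated as immediate. The only point deserving a line of care is verifying that the added edges really leave all other counts untouched --- for instance, that joining each $V_i$ fully to each $X_j$ does not change $|N(u) \cap W_{i',j'}|$ for $u \in V_i$ (clear, since no $W$-cell is incident to a new edge) and does not change the counts among the $V_i$'s or among the $X_j$'s --- together with the trivial remark that the new $V$--$X$ and $X$--$V$ blocks have constant row sums, so the quotient matrices are well defined for $\mathcal{G}_E(G)$ on these partitions exactly as they were for $\mathcal{P}(G)$.
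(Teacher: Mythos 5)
Your proposal is correct and takes essentially the same route as the paper: the paper states the lemma as immediate from Lemmas \ref{EPG:pnqm:partition1} and \ref{EPG:pnqm:partition2}, whose proofs rest on exactly the observation you use, namely that $\mathcal{G}_E(G)$ differs from $\mathcal{P}(G)$ only by the complete bipartite set of edges between $V_2$ and $V_4$ (equivalently, between the $V_i$ and $X_j$ cells), so the quotient matrices of Lemma \ref{PG:pnqm:T1T2} need only be updated in the corresponding entries and blocks.
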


\begin{lemma}\label{EPG:pnqm:T2charpoly}
Suppose $T_2$ is the matrix given in \eqref{EPG:pnqm:T2}. Then
\[\phi(T_2,x) = \phi(T_1,x) (x-pq+p+q)^{(\alpha-1)(\beta-1)} f(x)^{\alpha-1}g(x)^{\beta-1},\]
where $\alpha = \frac{p^n-1}{p-1}$, $\beta = \frac{q^m-1}{q-1}$, and $f(x)$ and $g(x)$ are the characteristic polynomials of the matrices $\left[\begin{array}{cc}
     p-2&  (p-1)(q^m-1)\\
     p-1& (p-1)(q-1)-1
\end{array}\right]$ and $\left[\begin{array}{cc}
     (p-1)(q-1)-1&q-1  \\
     (p^n-1)(q-1)& q-2
\end{array}\right]$, respectively.
\end{lemma}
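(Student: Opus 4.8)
The plan is to run the argument of Lemma~\ref{PG:pnqm:T2charpoly} essentially verbatim, noting that the matrix $T_2$ in \eqref{EPG:pnqm:T2} differs from the one in \eqref{PG:pnqm:T2} only in two blocks: the $(2,4)$ block, which is now $(q-1)\1_\alpha\1'_\beta$, and the $(4,2)$ block, which is now $(p-1)\1_\beta\1'_\alpha$ (both are $O$ in the power-graph case). Write $B$ and $C$ for the two $2\times2$ matrices appearing in the statement, so that $f(x)=\phi(B,x)$ and $g(x)=\phi(C,x)$. First I would show that $\phi(T_1,x)\mid\phi(T_2,x)$: if $(x_1,x_2,x_3,x_4)'$ is an eigenvector of $T_1$ for $\lambda$, then $y=(x_1,\,x_2\1'_\alpha,\,x_3\1'_{\alpha\beta},\,x_4\1'_\beta)'$ is a $\lambda$-eigenvector of $T_2$, and the set $S_1$ of these lifts is linearly independent since a dependence among them produces one among the eigenvectors of $T_1$.

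Next I would reuse the vectors $v^i\in\rr^\alpha$ ($1\le i\le\alpha-1$) and $w^j\in\rr^\beta$ ($1\le j\le\beta-1$) from \eqref{PG:pnqm:T2evec}, which satisfy $\1'_\alpha v^i=0$ and $\1'_\beta w^j=0$, and perform the three block computations of Lemma~\ref{PG:pnqm:T2charpoly}. (i) $T_2\,(0,\0_\alpha,v^i\otimes w^j,\0_\beta)'=(pq-p-q)\,(0,\0_\alpha,v^i\otimes w^j,\0_\beta)'$, using $(p-1)(q-1)-1=pq-p-q$; as these $(\alpha-1)(\beta-1)$ vectors are independent, $pq-p-q$ is an eigenvalue of $T_2$ with that multiplicity, which accounts for the factor $(x-pq+p+q)^{(\alpha-1)(\beta-1)}$. (ii) For an eigenvector $(x_1,x_2)$ of $B$ with eigenvalue $\lambda$, the vector $z^i=(0,\,x_1v^i,\,x_2(v^i\otimes\1_\beta),\,\0_\beta)'$ satisfies $T_2z^i=\lambda z^i$; here one needs $(I_\alpha\otimes\1'_\beta)(v^i\otimes\1_\beta)=\beta v^i$ together with $(p-1)(q^m-1)=(p-1)(q-1)\beta$, so that the first two coordinate-blocks reproduce $Bx=\lambda x$. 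This yields the factor $f(x)^{\alpha-1}$. (iii) Symmetrically, for an eigenvector $(x_1,x_2)$ of $C$ with eigenvalue $\mu$, the vector $\widetilde{z}^j=(0,\,\0_\alpha,\,x_1(\1_\alpha\otimes w^j),\,x_2w^j)'$ satisfies $T_2\widetilde{z}^j=\mu\widetilde{z}^j$, using $(p^n-1)(q-1)=(p-1)(q-1)\alpha$; this yields the factor $g(x)^{\beta-1}$.

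The only genuinely new point — and the place that demands care — is that the two extra blocks do not disturb any of (i)--(iii). In (i) the first coordinate and the fourth block of the test vector vanish; in (ii) the fourth block vanishes, so the $(2,4)$ block contributes nothing, while $\1'_\alpha v^i=0$ makes the $(4,2)$ block contribute nothing; in (iii) the second block vanishes, so the $(4,2)$ block contributes nothing, while $\1'_\beta w^j=0$ makes the $(2,4)$ block contribute nothing. Hence $B$, $C$, and therefore $f$, $g$, come out exactly as in the power-graph case. Finally, as in Lemma~\ref{PG:pnqm:T2charpoly}, the union $S_1\cup S_2\cup S_3\cup S_4$ is linearly independent — its members have staggered supports across the four coordinate-blocks, the block-$3$ parts falling into the four mutually independent pieces of the decomposition $\rr^\alpha\otimes\rr^\beta=(\langle\1_\alpha\rangle\oplus\1_\alpha^{\perp})\otimes(\langle\1_\beta\rangle\oplus\1_\beta^{\perp})$ — and it has cardinality $4+(\alpha-1)(\beta-1)+2(\alpha-1)+2(\beta-1)=(\alpha+1)(\beta+1)=\deg\phi(T_2,x)$. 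Comparing degrees then gives
\[\phi(T_2,x)=\phi(T_1,x)\,(x-pq+p+q)^{(\alpha-1)(\beta-1)}f(x)^{\alpha-1}g(x)^{\beta-1}.\]
I expect the main obstacle to be nothing more than this bookkeeping: tracking the Kronecker-product actions so that $B$ and $C$ appear verbatim, and confirming that the two new blocks vanish on all of the test vectors. There is no conceptual difficulty beyond what is already present in the proof of Lemma~\ref{PG:pnqm:T2charpoly}.
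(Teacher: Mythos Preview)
Your proposal is correct and follows essentially the same approach as the paper: lift eigenvectors of $T_1$ to $T_2$, reuse the vectors $v^i$, $w^j$ from Lemma~\ref{PG:pnqm:T2charpoly} to produce the factors $(x-pq+p+q)^{(\alpha-1)(\beta-1)}$, $f(x)^{\alpha-1}$, $g(x)^{\beta-1}$, and count dimensions. In fact your write-up is more explicit than the paper's own proof, which simply refers back to Lemma~\ref{PG:pnqm:T2charpoly} without spelling out why the two modified blocks $(q-1)\1_\alpha\1'_\beta$ and $(p-1)\1_\beta\1'_\alpha$ annihilate all the test vectors; your observation that $\1'_\alpha v^i=0$ and $\1'_\beta w^j=0$ handle this is exactly the point.
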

\begin{proof}
At this point, it is evidently clear that $\phi(T_1,x)$ divides $\phi(T_2,x)$. Let $S_1$ be the set of eigenvectors of $T_2$ formed from the linear independent eigenvectors of $T_1$. For $i \in \{1,\dotsc,\alpha-1\}$ and $j \in \{1,\dotsc,\beta-1\}$, let $v^i\in \rr^{\alpha}$ and $w^j\in \rr^{\beta}$ be the vectors defined in \eqref{PG:pnqm:T2evec}. From Lemma \ref{PG:pnqm:T2charpoly}, it is obvious that
\begin{equation*}
    \begin{aligned}
    T_2 \left[\begin{array}{c}
    0  \\
    \0_\alpha\\
    v^i\otimes w^j\\
\0_\beta
\end{array}
\right]
 =(pq-p-q)\left[ \begin{array}{c}
    0  \\
    \0_\alpha\\
    v^i\otimes w^j\\
    \0_\beta
\end{array}
\right],
    \end{aligned}
\end{equation*}
for each $i \in \{1,\dotsc,\alpha-1\}$ and $j \in \{1,\dotsc,\beta-1\}$ and hence $pq-p-q$ is an eigenvalue of $T_2$ with multiplicity $(\alpha-1)(\beta-1)$. Also, these $(\alpha-1)(\beta-1)$ eigenvectors and the vectors in $S_1$ are linearly independent. Suppose $f(x)$ and $g(x)$ are the characteristic polynomials of the matrices
\[B=\left[\begin{array}{cc}
     p-2&  (p-1)(q^m-1)\\
     p-1& (p-1)(q-1)-1
\end{array}\right]~\mbox{and}~ C=\left[\begin{array}{cc}
     (p-1)(q-1)-1&q-1  \\
     (p^n-1)(q-1)& q-2
\end{array}\right],\]
respectively. Similar to the proof of Lemma \ref{PG:pnqm:T2charpoly}, it is easy to see that $f(x)$ and $g(x)$ divides $\phi(T_2,x)$ such that
\[\phi(T_2,x) = \phi(T_1,x) (x-pq+p+q)^{(\alpha-1)(\beta-1)} f(x)^{\alpha-1}g(x)^{\beta-1}.\]
The proof is complete.
\end{proof}

Now, we are ready to state the result analogous to Theorem \ref{PG:pnqm:AM:charpoly} for the characteristic polynomial of the adjacency matrix of the $\mathcal{G}_E(\El(p^n) \times \El(q^m))$. The proof is omitted since it is similar to the proof of Theorem \ref{PG:pnqm:AM:charpoly}.
\begin{theorem}
Let $G=\El(p^n) \times \El(q^m)$. If $A$ is the adjacency matrix of $\mathcal{G}_E(G)$, then
\[\phi_A(\mathcal{G}_E(G),x)= \phi(T_1,x) (x+1)^{(p^nq^m-(\alpha +1)(\beta+1))} (x-pq+p+q)^{(\alpha-1)(\beta-1)} f(x)^{\alpha-1}g(x)^{\beta-1},\]
where $\alpha = \frac{p^n-1}{p-1}$, $\beta = \frac{q^m-1}{q-1}$, and $f(x)$ and $g(x)$ are the characteristic polynomials of the matrices  $\left[\begin{array}{cc}
     p-2&  (p-1)(q^m-1)\\
     p-1& (p-1)(q-1)-1
\end{array}\right]$ and $\left[\begin{array}{cc}
     (p-1)(q-1)-1&q-1  \\
     (p^n-1)(q-1)& q-2
\end{array}\right]$, respectively.
\end{theorem}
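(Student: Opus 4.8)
The plan is to adapt the proof of Theorem~\ref{PG:pnqm:AM:charpoly} almost word for word. The key observation is that, for $G=\El(p^n)\times\El(q^m)$, the enhanced power graph differs from the power graph only by the extra edges joining each $V_i$ to each $X_j$ (in the notation of Lemma~\ref{PG:pnqm:partition2}), and Lemma~\ref{EPG:pnqm:partition2} together with Lemma~\ref{EPG:pnqm:T1T2} has already absorbed this change into the equitable partitions and the quotient matrices $T_1,T_2$. So the first step is to note that, by Lemma~\ref{EPG:pnqm:partition2}, the partition $P$ of Lemma~\ref{PG:pnqm:partition2} is equitable for $\mathcal{G}_E(G)$; hence Theorem~4 of \cite{schwenk74} gives that $\phi(T_2,x)$ divides $\phi_A(\mathcal{G}_E(G),x)$.

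The second step is to exploit the join representation established just before Section~\ref{sec:EPG:pnqm:AM},
\[\mathcal{G}_E(G)\cong(K_1+\Gamma')\big[K_1,\underbrace{K_{p-1},\dots,K_{p-1}}_{\alpha},\underbrace{K_{(p-1)(q-1)},\dots,K_{(p-1)(q-1)}}_{\alpha\beta},\underbrace{K_{q-1},\dots,K_{q-1}}_{\beta}\big].\]
Each replacing graph $K_{n_i}$ is regular, so Theorem~7 of \cite{schwenk74} applies and each block contributes the factor $\phi_A(K_{n_i},x)/(x-n_i+1)=(x+1)^{n_i-1}$. Adding these exponents over the $1+\alpha+\alpha\beta+\beta=(\alpha+1)(\beta+1)$ blocks, and using $\alpha(p-1)=p^n-1$, $\beta(q-1)=q^m-1$ and $\alpha\beta(p-1)(q-1)=(p^n-1)(q^m-1)$, one obtains $\sum_i(n_i-1)=p^nq^m-(\alpha+1)(\beta+1)$, so that
\[\phi_A(\mathcal{G}_E(G),x)=\phi(T_2,x)\,(x+1)^{p^nq^m-(\alpha+1)(\beta+1)}.\]

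The final step is to substitute the factorization of $\phi(T_2,x)$ from Lemma~\ref{EPG:pnqm:T2charpoly}, namely $\phi(T_2,x)=\phi(T_1,x)(x-pq+p+q)^{(\alpha-1)(\beta-1)}f(x)^{\alpha-1}g(x)^{\beta-1}$, which yields exactly the claimed formula. Since every ingredient is already in place, there is essentially no genuine obstacle; the only points that need a moment's care are checking that the hypotheses of Schwenk's theorems hold ($K_1+\Gamma'$ is a connected outer graph and the blocks $K_{n_i}$ are regular) and the exponent bookkeeping for the power of $(x+1)$. If desired, the same computation could be repeated with $T_1$ in place of $T_2$ via Theorem~4 of \cite{schwenk74} and the coarser partition of Lemma~\ref{EPG:pnqm:partition1}, providing an independent consistency check.
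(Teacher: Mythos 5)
Your proposal is correct and follows exactly the route the paper intends: the paper omits this proof precisely because it mirrors that of Theorem~\ref{PG:pnqm:AM:charpoly}, i.e.\ apply Schwenk's join theorem to $\mathcal{G}_E(G)\cong(K_1+\Gamma')[\dots]$ to get the factor $(x+1)^{p^nq^m-(\alpha+1)(\beta+1)}$ times $\phi(T_2,x)$, and then substitute the factorization of $\phi(T_2,x)$ from Lemma~\ref{EPG:pnqm:T2charpoly}. Your exponent bookkeeping $\sum_i(n_i-1)=p^nq^m-(\alpha+1)(\beta+1)$ matches the paper's computation.
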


\subsection{Distance spectrum of $\mathcal{G}_E(\El(p^n) \times \El(q^m))$}
Using \eqref{eqn:TD1} and Lemma \ref{EPG:pnqm:T1T2}, we can compute the distance quotient matrices of $\mathcal{G}_E( \El(p^n) \times \El(q^m))$ corresponding to the equitable partition given in Lemma \ref{PG:pnqm:partition1} and Lemma \ref{PG:pnqm:partition2}. These matrices are described in the following lemma.
\begin{lemma}\label{EPG:pnqm:T1DT2D}
The distance quotient matrices $T^D_1$ and $T^D_2$ of $\mathcal{G}_E( \El(p^n) \times \El(q^m))$ corresponding to the equitable partition given in Lemma \ref{PG:pnqm:partition1} and Lemma \ref{PG:pnqm:partition2} are

\[\left[\begin{array}{cccc}
               0& p^n-1 &  (p^n-1)(q^m-1) &  q^m-1\\
     1& 2p^n-p-2 &  (2p^n-p-1)(q^m-1) &  q^m-1\\
     1& 2p^n-p-1 &  (p-1)(q-1)(2\gamma-1)-1 &  2q^m-q-1\\
     1& p^n-1 &  (p^n-1)(2q^m-q-1) &  2q^m-q-2\\
\end{array}
\right]\]
and
\begin{equation}\label{EPG:pnqm:T2D}
    \begin{small}
  \left[ \begin{array}{cccc}
      0& (p-1)\1'_\alpha & (p-1)(q-1)\1'_{\gamma} & (q-1) \1'_\beta \\
     
     \1_\alpha& p(2J_\alpha-I_\alpha)-2J_\alpha & (p-1)(q-1)(2J_{\alpha}-I_\alpha) \otimes \1'_\beta & (q-1)J_{\alpha,\beta}\\
     
     \1_{\gamma} & (p-1)(2J_{\alpha}-I_\alpha) \otimes \1_\beta & (p-1)(q-1)(2J_\gamma-I_{\gamma})-I_{\gamma}& (q-1)\1_\alpha \otimes (2J_{\beta}-I_\beta)  \\
     
     \1_\beta & (p-1)J_{\beta, \alpha} &(p-1)(q-1)  \1'_\alpha \otimes (2J_{\beta}-I_\beta) & q(2J_{\beta}- I_\beta)-2J_{\beta}
\end{array}
\right],      
    \end{small}
\end{equation}
respectively, where $\alpha = \frac{p^n-1}{p-1}$, and $\beta = \frac{q^m-1}{q-1}$ and $\gamma = \alpha \beta$.
\end{lemma}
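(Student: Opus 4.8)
The plan is to obtain both distance quotient matrices directly from the ordinary quotient matrices $T_1$ and $T_2$ of Lemma~\ref{EPG:pnqm:T1T2} through the relation \eqref{eqn:TD1}. To invoke \eqref{eqn:TD1} one first needs $\text{diam}(\mathcal{G}_E(G)) \leq 2$, and this is immediate: for every $w \in G$ the identity $(e,e)$ lies in $\langle w \rangle$, so $(e,e)$ is a dominating vertex of $\mathcal{G}_E(G)$ and any two distinct vertices are joined by a path of length at most $2$ through $(e,e)$. Hence \eqref{eqn:TD1} applies to both equitable partitions, which are equitable for $\mathcal{G}_E(G)$ by Lemma~\ref{EPG:pnqm:partition1} and Lemma~\ref{EPG:pnqm:partition2}.

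Next I would record the block sizes. For the partition of Lemma~\ref{PG:pnqm:partition1}, $|V_1|=1$, $|V_2|=p^n-1$, $|V_3|=(p^n-1)(q^m-1)$ and $|V_4|=q^m-1$; for the partition of Lemma~\ref{PG:pnqm:partition2}, $|U_1|=1$, $|V_i|=p-1$, $|W_{i,j}|=(p-1)(q-1)$ and $|X_j|=q-1$, and I set $\gamma=\alpha\beta$, noting $(p-1)(q-1)\gamma=(p^n-1)(q^m-1)$. The matrix $T^D_1$ is then produced by the sixteen scalar evaluations $t^D_{ii}=2|V_i|-2-t_{ii}$ and $t^D_{ij}=2|V_j|-t_{ij}$, with the $t_{ij}$ read off the matrix $T_1$ of Lemma~\ref{EPG:pnqm:T1T2}; for instance the $(2,3)$ entry becomes $2(p^n-1)(q^m-1)-(p-1)(q^m-1)=(q^m-1)(2p^n-p-1)$ and the $(3,3)$ entry becomes $2(p^n-1)(q^m-1)-2-\big((p-1)(q-1)-1\big)=(p-1)(q-1)(2\gamma-1)-1$.

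For $T^D_2$ the computation is the same in spirit, but each diagonal super-block of $T_2$ must first be split into its within-group diagonal part and its within-group off-diagonal part, since these are transformed by the two different cases of \eqref{eqn:TD1}. Thus the block $(p-2)I_\alpha$ has diagonal entries $p-2$, replaced by $2(p-1)-2-(p-2)=p-2$, and off-diagonal entries $0$, replaced by $2(p-1)=2p-2$; these reassemble as $(2p-2)J_\alpha-pI_\alpha=p(2J_\alpha-I_\alpha)-2J_\alpha$, and likewise $\big((p-1)(q-1)-1\big)I_\gamma$ becomes $(p-1)(q-1)(2J_\gamma-I_\gamma)-I_\gamma$ and $(q-2)I_\beta$ becomes $q(2J_\beta-I_\beta)-2J_\beta$. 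Every off-diagonal block transforms entrywise by $t^D=2|V_j|-t_{ij}$, so for example $(p-1)(q-1)\,I_\alpha\otimes\1'_\beta$ becomes $(p-1)(q-1)(2J_\alpha-I_\alpha)\otimes\1'_\beta$, $(q-1)\1_\alpha\otimes I_\beta$ becomes $(q-1)\1_\alpha\otimes(2J_\beta-I_\beta)$, and $(q-1)\1_\alpha\1'_\beta$ becomes $(q-1)J_{\alpha,\beta}$. Carrying this out for every block yields exactly the two matrices in the statement. The only genuine obstacle is bookkeeping: keeping the diagonal/off-diagonal split of each Kronecker-structured block straight and remembering to use the column-block size, not the row-block size, in the term $2|V_j|$; once that is handled the verification is purely mechanical.
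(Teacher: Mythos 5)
Your proposal is correct and follows exactly the route the paper intends: the paper itself gives no proof beyond the remark that the matrices follow from \eqref{eqn:TD1} applied to the quotient matrices of Lemma~\ref{EPG:pnqm:T1T2}, and your entrywise computation (including the diagonal/off-diagonal split of the Kronecker-structured diagonal super-blocks) is precisely that verification. Your explicit check that $(e,e)$ is a dominating vertex, so that $\operatorname{diam}(\mathcal{G}_E(G))\leq 2$ and \eqref{eqn:TD1} is applicable, supplies a hypothesis the paper leaves implicit.
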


In the following lemma, we compute the characteristic polynomial of $T_2^D$. 
\begin{lemma}\label{EPG:pnqm:T2Dcharpoly}
Suppose $T^D_2$ is the matrix given in  \eqref{EPG:pnqm:T2D}, then 
\[\phi(T^D_2,x) = \phi(T_1^D,x) (x+(p-1)(q-1)+1)^{(\alpha-1)(\beta-1)} f(x)^{\alpha-1}g(x)^{\beta-1},\]
where $\alpha = \frac{p^n-1}{p-1}$, $\beta = \frac{q^m-1}{q-1}$, and $f(x)$ and $g(x)$ are the characteristic polynomials of the matrices  $\left[\begin{array}{cc}
     -p&  (1-p)(q^m-1)\\
     1-p& (1-p)(q-1)-1
\end{array}\right]$ and $\left[\begin{array}{cc}
     (p-1)(1-q)-1&1-q  \\
     (p^n-1)(1-q)& -q
\end{array}\right]$, respectively.
\end{lemma}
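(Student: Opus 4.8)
\emph{Proof plan.} The strategy is to mimic the argument of Lemma~\ref{PG:pnqm:T2Dcharpoly} at the structural level: I will exhibit $1+\alpha+\alpha\beta+\beta$ linearly independent eigenvectors of $T_2^D$ — exactly the order of that matrix — arranged into four families $S_1,S_2,S_3,S_4$ whose eigenvalues contribute the four factors on the right-hand side. Throughout I use the block labelling $(U_1,V_1,\dots,V_\alpha,W_{1,1},\dots,W_{\alpha,\beta},X_1,\dots,X_\beta)$ from Lemma~\ref{PG:pnqm:partition2}, the difference vectors $v^i\in\rr^{\alpha}$ and $w^j\in\rr^{\beta}$ of~\eqref{PG:pnqm:T2evec} (so $\1_\alpha' v^i=0$, $\1_\beta' w^j=0$, whence $J_\alpha v^i=\0_\alpha$ and $J_\beta w^j=\0_\beta$), and the identity $J_\gamma=J_\alpha\otimes J_\beta$ with $\gamma=\alpha\beta$.

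First I would dispatch $S_1$: since the partition of Lemma~\ref{PG:pnqm:partition2} refines that of Lemma~\ref{PG:pnqm:partition1} and both are equitable for $\mathcal{G}_E(G)$ by Lemma~\ref{EPG:pnqm:partition2}, every eigenvector $(x_1,x_2,x_3,x_4)'$ of $T_1^D$ lifts to the eigenvector $(x_1,\,x_2\1_\alpha',\,x_3\1_\gamma',\,x_4\1_\beta')'$ of $T_2^D$ with the same eigenvalue, and any dependence among these lifts would descend to one among the eigenvectors of $T_1^D$; hence $\phi(T_1^D,x)\mid\phi(T_2^D,x)$. For $S_2$ I would take the vectors $(0,\,\0_\alpha,\,v^i\otimes w^j,\,\0_\beta)'$ supported on the $W$-block, for $1\le i\le\alpha-1$ and $1\le j\le\beta-1$. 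Reading off~\eqref{EPG:pnqm:T2D} and using $J_\alpha v^i=\0_\alpha$, $J_\beta w^j=\0_\beta$, $J_\gamma=J_\alpha\otimes J_\beta$, every off-diagonal block annihilates $v^i\otimes w^j$ while the $W$-diagonal block acts as $(p-1)(q-1)(2J_\gamma-I_\gamma)-I_\gamma$, sending $v^i\otimes w^j$ to $-\bigl((p-1)(q-1)+1\bigr)(v^i\otimes w^j)$; this yields the eigenvalue $-((p-1)(q-1)+1)$ with multiplicity $(\alpha-1)(\beta-1)$.

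Next I would construct $S_3$ and $S_4$ from the two $2\times2$ matrices $B$ and $C$ in the statement. Given an eigenpair $(\lambda,(x_1,x_2)')$ of $B$, set $z^i:=(0,\,x_1 v^i,\,x_2(v^i\otimes\1_\beta),\,\0_\beta)'$ for $1\le i\le\alpha-1$ and verify $T_2^D z^i=\lambda z^i$ directly from~\eqref{EPG:pnqm:T2D}; here the crucial simplifications are $J_\alpha v^i=\0_\alpha$, $\1_\beta'\1_\beta=\beta$, $(q-1)\beta=q^m-1$ and $(2J_\gamma-I_\gamma)(v^i\otimes\1_\beta)=-(v^i\otimes\1_\beta)$, after which the two scalar relations produced (on the $V$- and $W$-coordinates) are precisely the two rows of $Bx=\lambda x$, while the $U_1$- and $X$-coordinates vanish. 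This gives each eigenvalue of $B$ with multiplicity $\alpha-1$. Symmetrically, for an eigenpair $(\mu,(x_1,x_2)')$ of $C$ the vectors $\widetilde z^j:=(0,\,\0_\alpha,\,x_1(\1_\alpha\otimes w^j),\,x_2 w^j)'$, $1\le j\le\beta-1$, satisfy $T_2^D\widetilde z^j=\mu\widetilde z^j$, now using $(p-1)\alpha=p^n-1$, giving each eigenvalue of $C$ with multiplicity $\beta-1$. Finally I would check that $S_1\cup S_2\cup S_3\cup S_4$ is linearly independent by inspecting the $V$-, $W$- and $X$-coordinates separately, just as in Lemma~\ref{PG:pnqm:T2Dcharpoly}, and conclude from the count
\[
4+(\alpha-1)(\beta-1)+2(\alpha-1)+2(\beta-1)=1+\alpha+\alpha\beta+\beta
\]
that $\phi(T_2^D,x)=\phi(T_1^D,x)\,\bigl(x+(p-1)(q-1)+1\bigr)^{(\alpha-1)(\beta-1)}f(x)^{\alpha-1}g(x)^{\beta-1}$.

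The main obstacle is organizational rather than conceptual: the Kronecker-product bookkeeping in the $S_3$/$S_4$ verifications, where one must track exactly how the scalars $\1_\beta'\1_\beta=\beta$, $(p-1)\alpha=p^n-1$ and $(q-1)\beta=q^m-1$ get absorbed into the entries of $B$ and $C$, thereby converting the adjacency-type $2\times2$ blocks of Lemma~\ref{EPG:pnqm:T2charpoly} into the distance-type ones appearing here. A minor additional subtlety is the case where $B$ or $C$ fails to be diagonalizable: one sidesteps it either by replacing eigenvectors with generalized eigenvectors, or — more cleanly — by noting that the spans arising in $S_3$ and $S_4$ are $T_2^D$-invariant subspaces on which the characteristic polynomial equals $f(x)^{\alpha-1}$, respectively $g(x)^{\beta-1}$, so those factors divide $\phi(T_2^D,x)$ unconditionally, and the exact degree count then forces the stated identity.
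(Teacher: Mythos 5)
Your proposal is correct and follows essentially the same route as the paper: lifting the eigenvectors of $T_1^D$, using the difference vectors $v^i\otimes w^j$ for the eigenvalue $-((p-1)(q-1)+1)$, building the $S_3$/$S_4$ families from the $2\times2$ matrices $B$ and $C$, and concluding by the dimension count (the paper itself just defers these verifications to its earlier Lemmas). Your remark on handling a possibly non-diagonalizable $B$ or $C$ via invariant subspaces is a welcome extra precaution that the paper glosses over, but otherwise the two arguments coincide.
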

\begin{proof}
It is easy to see that $\phi(T_1^D,x)$ divides $\phi(T_2^D,x)$. Also, it follows directly from Lemma \ref{PG:pnqm:T2Dcharpoly}, that for each $i \in \{1,\dotsc,\alpha-1\}$ and $j \in \{1,\dotsc,\beta-1\}$
\begin{equation*}
    \begin{aligned}
    T_2^D \left[\begin{array}{c}
    0  \\
    \0_\alpha\\
    v^i\otimes w^j\\
\0_\beta
\end{array}
\right]
=-((p-1)(q-1)+1)\left[ \begin{array}{c}
    0  \\
    \0_\alpha\\
    v^i\otimes w^j\\
    \0_\beta
\end{array}
\right],
    \end{aligned}
\end{equation*}
where $v^i\in \rr^{\alpha}$ and $w^j\in \rr^{\beta}$ are defined in \eqref{PG:pnqm:T2evec}. 
Thus, we can conclude that $-(1+(p-1)(q-1))$ is an eigenvalue of $T_2^D$ with multiplicity $(\alpha-1)(\beta-1)$.  Now, consider the following matrices
\[B=\left[\begin{array}{cc}
     -p&  (1-p)(q^m-1)\\
     1-p& (1-p)(q-1)-1
\end{array}\right]~\mbox{and}~ C=\left[\begin{array}{cc}
     (p-1)(1-q)-1&1-q  \\
     (p^n-1)(1-q)& -q
\end{array}\right].\]
Similar to the proof of Lemma \ref{PG:pnqm:T2Dcharpoly}, we can conclude that every eigenvalue of $B$ is an eigenvalue of $T_2^D$ with multiplicity $\alpha-1$, and every eigenvalue of $C$ is an eigenvalue of $T_2^D$ with multiplicity $\beta-1$. Also, if $f(x)$ and $g(x)$ are the characteristic polynomials of the matrices $B$ and $C$, respectively, then
\[\phi(T_2^D,x) = \phi(T_1^D,x) (x+(p-1)(q-1)+1)^{(\alpha-1)(\beta-1)} f(x)^{\alpha-1}g(x)^{\beta-1}.\]
This completes the proof.
\end{proof}

The next theorem is analogous to Theorem \ref{PG:pnqm:DM:charpoly} and computes the characteristic polynomial of the distance matrix of $\mathcal{G}_E( \El(p^n) \times \El(q^m))$. The proof is  similar to the proof of Theorem \ref{PG:pnqm:DM:charpoly}.

\begin{theorem}
Suppose $G=\El(p^n) \times \El(q^m)$. If $D$ is the distance matrix of
$\mathcal{G}_E(G)$, then the characteristic polynomial $\phi_D(\mathcal{G}_E(G),x)$ of $D$ is
\[\phi(T_1^D,x) (x+1)^{(p^nq^m-(\alpha +1)(\beta+1))} (x+(p-1)(q-1)+1)^{(\alpha-1)(\beta-1)} f(x)^{\alpha-1}g(x)^{\beta-1},\]
where $\alpha = \frac{p^n-1}{p-1}$, $\beta = \frac{q^m-1}{q-1}$, and $f(x)$ and $g(x)$ are the characteristic polynomials of the matrices  $\left[\begin{array}{cc}
     -p&  (1-p)(q^m-1)\\
     1-p& (1-p)(q-1)-1
\end{array}\right]$ and $\left[\begin{array}{cc}
     (p-1)(1-q)-1&1-q  \\
     (p^n-1)(1-q)& -q
\end{array}\right]$, respectively.
\end{theorem}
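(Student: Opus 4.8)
The plan is to follow the template of the proof of Theorem~\ref{PG:pnqm:DM:charpoly}: combine the $\Gamma$-join description of $\mathcal{G}_E(G)$ with Theorem~\ref{thm:join} and the factorization of $\phi(T_2^D,x)$ already established in Lemma~\ref{EPG:pnqm:T2Dcharpoly}.

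First I would note that $(e,e)$ is a dominating vertex of $\mathcal{G}_E(G)$, so its diameter is at most $2$, and recall from the lemma stated just before Section~\ref{sec:EPG:pnqm:AM} that
\[
\mathcal{G}_E(G)\cong (K_1+\Gamma')\big[K_1,\underbrace{K_{p-1},\dots,K_{p-1}}_{\alpha},\underbrace{K_{(p-1)(q-1)},\dots,K_{(p-1)(q-1)}}_{\alpha\beta},\underbrace{K_{q-1},\dots,K_{q-1}}_{\beta}\big],
\]
where $\alpha=\frac{p^n-1}{p-1}$ and $\beta=\frac{q^m-1}{q-1}$. Every factor $K_n$ appearing here is $(n-1)$-regular of diameter at most $1$, and the base graph $K_1+\Gamma'$ is connected of diameter at most $2$, so the hypotheses of Theorem~\ref{thm:join} are met. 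The equitable partition in play is precisely the one of Lemma~\ref{PG:pnqm:partition2} (shown equitable for $\mathcal{G}_E(G)$ in Lemma~\ref{EPG:pnqm:partition2}), whose distance quotient matrix is the matrix $T_2^D$ of Lemma~\ref{EPG:pnqm:T1DT2D}.

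Next I would evaluate the product occurring in Theorem~\ref{thm:join}. For a complete graph $K_n$ one has $r=n-1$ and $\phi_D(K_n,x)=\phi_A(K_n,x)=(x+1)^{n-1}(x-n+1)$, so $\phi_D(K_n,x)/(x-2n+2+r)=(x+1)^{n-1}$; running over the factors of the join gives $(x+1)^{E}$ with $E=(p-2)\alpha+((p-1)(q-1)-1)\alpha\beta+(q-2)\beta$. Using $(p-1)\alpha=p^n-1$ and $(q-1)\beta=q^m-1$, a short manipulation shows $E=p^nq^m-(\alpha+1)(\beta+1)$. Hence Theorem~\ref{thm:join} yields $\phi_D(\mathcal{G}_E(G),x)=\phi(T_2^D,x)\,(x+1)^{p^nq^m-(\alpha+1)(\beta+1)}$, and substituting $\phi(T_2^D,x)=\phi(T_1^D,x)(x+(p-1)(q-1)+1)^{(\alpha-1)(\beta-1)}f(x)^{\alpha-1}g(x)^{\beta-1}$ from Lemma~\ref{EPG:pnqm:T2Dcharpoly} gives the asserted formula.

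I do not expect an essential obstacle: the argument is entirely parallel to that of Theorem~\ref{PG:pnqm:DM:charpoly}. The only points requiring care are the bookkeeping — confirming that the $\Gamma$-join partition $\{V(\Gamma_i)\}$ coincides verbatim with the partition $P$ of Lemma~\ref{PG:pnqm:partition2}, so that the $T_2^D$ fed into Theorem~\ref{thm:join} is exactly the one from Lemma~\ref{EPG:pnqm:T1DT2D} — and the algebraic identity $E=p^nq^m-(\alpha+1)(\beta+1)$. An equally routine alternative would be to use the fact that the adjacency and distance matrices of a complete graph agree and deduce the distance formula directly from the adjacency formula for $\mathcal{G}_E(G)$ via Theorem~\ref{thm:join}, just as was done for $\mathcal{P}(G)$.
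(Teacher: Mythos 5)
Your proposal is correct and follows essentially the same route as the paper: the paper's proof is a one-line reference to the argument of Theorem~\ref{PG:pnqm:DM:charpoly}, which likewise combines the join decomposition with Theorem~\ref{thm:join}, the identity of adjacency and distance matrices for complete graphs, and the factorization of $\phi(T_2^D,x)$ from Lemma~\ref{EPG:pnqm:T2Dcharpoly}. Your exponent computation $E=p^nq^m-(\alpha+1)(\beta+1)$ and the hypothesis checks are exactly the bookkeeping the paper leaves implicit.
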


\section{The finite abelian group $\El(p^n) \times \mathbb{Z}_m$} 
\label{sec:finiteabelgrp} 

We conclude the  paper by  determining the distance spectra of  the enhanced power graph of finite abelian group $\El(p^n) \times \mathbb{Z}_m$, where $n \geq 2$ and $\text{gcd}(p,m)=1$.
\begin{theorem}
\label{thm:elabelgen} 
Let $G=\El(p^n)\times \mathbb{Z}_m$, where $m$ is a positive integer such that $\text{gcd}(m,p)=1$. Suppose $\alpha=\frac{p^n-1}{p-1}$, then
$\phi_D(\mathcal{G}_E(G),x)$ is 
\begin{small}
    \begin{equation*}
        (x+mp-m+1)^{\alpha-1} (x+1)^{(mp-m-1)\alpha+m-1}(x^2+(mp+2-2mp^n)x+m^2(p^n-p)-2mp^n+mp+1).
    \end{equation*}
\end{small}
\end{theorem}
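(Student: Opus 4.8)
The plan is to mimic the structure established for $\El(p^n)\times\El(q^m)$: first describe the enhanced power graph of $G=\El(p^n)\times\mathbb{Z}_m$ as a suitable $\Gamma$-join, read off an equitable partition, compute the distance quotient matrix, and then peel off the ``trivial'' eigenvalues to isolate the small quotient. Since $\gcd(p,m)=1$, every cyclic subgroup of $G$ decomposes as a product of a subgroup of $\El(p^n)$ (of order $1$ or $p$) and the whole of $\mathbb{Z}_m$ (or a subgroup thereof), but the key point is that a maximal cyclic subgroup containing a given non-identity element $(a,b)$ with $a\neq e$ is $\langle a\rangle\times\mathbb{Z}_m$, which has order $pm$. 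Two elements are adjacent in $\mathcal{G}_E(G)$ iff they lie in a common cyclic subgroup, so $(a_1,b_1)\sim(a_2,b_2)$ iff $\langle a_1,a_2\rangle$ is cyclic in $\El(p^n)$ (i.e.\ $a_1,a_2$ lie in a common subgroup of order $\le p$) and $\langle b_1,b_2\rangle$ is cyclic in $\mathbb{Z}_m$ (automatic). First I would set up the partition: let $\langle a_1\rangle,\dots,\langle a_\alpha\rangle$ be the $\alpha=\frac{p^n-1}{p-1}$ distinct subgroups of order $p$ in $\El(p^n)$, put $Y_0:=\{e\}\times\mathbb{Z}_m$ (these $m$ vertices are all mutually adjacent and dominating, since $(e,b)$ lies in $\langle a_i\rangle\times\mathbb{Z}_m$ for every $i$), and put $Y_i:=(\langle a_i\rangle\setminus\{e\})\times\mathbb{Z}_m$ for $1\le i\le\alpha$, each of size $(p-1)m$. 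Then $\{Y_0,Y_1,\dots,Y_\alpha\}$ is the desired equitable partition, and
\[
\mathcal{G}_E(G)\cong K_{1,\alpha}\big[K_m,\underbrace{K_{(p-1)m},\dots,K_{(p-1)m}}_{\alpha}\big],
\]
because each $Y_i$ induces a clique (all of $\langle a_i\rangle\times\mathbb{Z}_m$ minus the identity coset is contained in the cyclic group $\langle a_i\rangle\times\mathbb{Z}_m$), $Y_0$ is joined to everything, and distinct $Y_i,Y_j$ have no edges between them. Here the diameter is $2$.

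Next I would write down the distance quotient matrix $T^D$ associated with this partition using \eqref{eqn:T} (with $|Y_0|=m$, $|Y_i|=(p-1)m$, and the neighbourhood counts just computed), obtaining a $(\alpha+1)\times(\alpha+1)$ matrix of the block form
\[
T^D=\left[\begin{array}{cc} m-1 & (p-1)m\,\1_\alpha' \\ m\,\1_\alpha & \big(2(p-1)m-1\big)J_\alpha-(p-1)m\,I_\alpha\cdot 0 + \cdots \end{array}\right]
\]
— more precisely the off-diagonal blocks between $Y_i$ and $Y_j$ ($i\neq j$, both nonzero) have constant row sum $2(p-1)m$, the diagonal block $Y_i\to Y_i$ has row sum $2(p-1)m-2-((p-1)m-1)=(p-1)m-1$, and the $Y_0$ rows/columns carry $m-1$ on the diagonal and $(p-1)m$, $m$ respectively off-diagonal. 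Then by Theorem~\ref{thm:join} (applied to $\Gamma=K_{1,\alpha}$, which is connected with diameter $2$, and the regular graphs $K_m$, $K_{(p-1)m}$, using $\phi_D(K_r,x)=(x+1)^{r-1}(x-r+1)$),
\[
\phi_D(\mathcal{G}_E(G),x)=\phi(T^D,x)\,\frac{(x+1)^{m-1}}{x-m+1}\cdot\frac{x-m+1}{1}\cdot\prod_{i=1}^\alpha\frac{(x+1)^{(p-1)m-1}\big(x-(p-1)m+1\big)}{x-(p-1)m+1},
\]
which collapses to $\phi(T^D,x)\,(x+1)^{(m-1)+\alpha((p-1)m-1)}=\phi(T^D,x)\,(x+1)^{(mp-m-1)\alpha+m-1}$, matching the stated exponent.

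The remaining step is to factor $\phi(T^D,x)$. I would exploit the symmetry among the $\alpha$ blocks $Y_1,\dots,Y_\alpha$: the vectors $(0,v^i)$ with $v^i$ as in \eqref{PG:pnqm:T2evec} (supported on the $Y$-coordinates, summing to zero) are eigenvectors of $T^D$. For such a vector the $J_\alpha$ terms annihilate and only the diagonal coefficient survives: the eigenvalue is $\big((p-1)m-1\big)-2(p-1)m=-(p-1)m-1=-(mp-m+1)$, with multiplicity $\alpha-1$. This accounts for the factor $(x+mp-m+1)^{\alpha-1}$. What is left is the $2\times2$ quotient obtained by restricting to vectors constant on all of $Y_1\cup\dots\cup Y_\alpha$, i.e.\ the quotient matrix of $T^D$ with respect to the coarser partition $\{Y_0,\ Y_1\cup\dots\cup Y_\alpha\}$, which is
\[
\left[\begin{array}{cc} m-1 & \alpha(p-1)m \\ m & \big(2(\alpha-1)(p-1)m\big)+\big((p-1)m-1\big) \end{array}\right]
=\left[\begin{array}{cc} m-1 & m(p^n-1) \\ m & 2mp^n-m-mp-1\end{array}\right],
\]
using $\alpha(p-1)=p^n-1$ and $2(\alpha-1)(p-1)m+(p-1)m-1=(2\alpha-1)(p-1)m-1=(2p^n-p-1)m-1$. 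Its characteristic polynomial is $x^2-\operatorname{tr}x+\det$ with trace $2mp^n-2m-mp-1+m-1=2mp^n-mp-m-2$, so the $x$-coefficient is $-(2mp^n-mp-m-2)=mp+2-2mp^n$ as claimed, and the constant term works out (after expanding $(m-1)(2mp^n-m-mp-1)-m\cdot m(p^n-1)$) to $m^2(p^n-p)-2mp^n+mp+1$.

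The only genuinely non-routine part is the very first step — justifying that a maximal cyclic subgroup containing $(a,b)$ with $a\neq e$ is exactly $\langle a\rangle\times\mathbb{Z}_m$, hence that two ``nonzero'' elements from different $\langle a_i\rangle$ are never adjacent while all elements of a single $Y_i$ pairwise are, so that the $\Gamma$-join description and its equitable partition are correct; everything after that is the same mechanical quotient-matrix computation already carried out for $\El(p^n)\times\El(q^m)$, and the diameter-$2$ hypothesis of Theorem~\ref{thm:join} holds because $Y_0$ dominates. I would therefore devote most care to the group-theoretic lemma and then let Theorem~\ref{thm:join} plus the eigenvector bookkeeping finish the job.
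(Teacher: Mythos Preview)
Your proposal is correct and follows essentially the same approach as the paper: both establish $\mathcal{G}_E(G)\cong K_{1,\alpha}[K_m,K_{(p-1)m},\dots,K_{(p-1)m}]$, apply Theorem~\ref{thm:join} to split off the $(x+1)^{(mp-m-1)\alpha+m-1}$ factor, and then compute $\phi(T^D,x)$. The only cosmetic differences are that the paper cites external results for the dominating vertices and the component count while you argue them directly via maximal cyclic subgroups, and the paper evaluates $\phi(T^D,x)$ by determinant row/column operations whereas you use the zero-sum eigenvectors plus a $2\times2$ quotient (exactly as was done for $T_2$ in Section~\ref{sec:pnqm}); both routes are routine and yield the same factorization.
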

\begin{proof}
By \cite[Corollary 4.2]{bera-dey-JGT-2022}, the dominating vertices of the graph
$\mathcal{G}_E(G)$ are precisely the vertices of the form $(e, a) $, where $e$ is the identity of $\El(p^n)$ and $a \in \mathbb{Z}_m.$
From Theorem 3.2 of \cite{bera-dey-sajal}, we know that the number of connected components of $\mathcal{G}_E(\El(p^n)\setminus \{e\})$ is $\frac{p^n-1}{p-1}$. Let $C_1,  \dots, C_{\alpha}$ be the components of $\mathcal{G}_E(\El(p^n)\setminus \{e\})$. Since $\text{gcd}(p,m)=1$, the sets
\[V_1:= \{(e,a): a \in \mathbb{Z}_m \},~\mbox{and}~V_{i}:= \{ ( b,a): b \in C_{i-1},  a \in \mathbb{Z}_m\},~\mbox{for}~ 2 \leq i \leq \alpha+1, \]
form an equitable partition for $\mathcal{G}_E(G)$. It is clear that for each $2 \leq i \leq \alpha+1$, $V_i$ is a complete graph on $m(p-1)$ vertices. 
Thus,
\begin{equation}\label{EPG:EpnZm:joinform}
  \mathcal{G}_E(G) \cong K_{1,\alpha}[K_m, \underbrace{K_{m(p-1)},\dotsc,K_{m(p-1)}}_{\alpha}].  
\end{equation}
 Suppose $T^D$ is the distance quotient matrix of $\mathcal{G}_E(G)$ corresponding to the equitable partition given in \eqref{EPG:EpnZm:joinform}. Then 
\[T^D= \left[\begin{array}{cccc}
     m-1 & m(p-1)\1_\alpha' \\
     m\1_\alpha & 2m(p-1)J_{\alpha}-(mp-m+1)I_{\alpha}
\end{array}\right].\]
By Theorem \ref{thm:join}, we have
\begin{equation*}
    \begin{aligned}
    \phi_D(\mathcal{G}_E(G), x) 
    &=\phi(T^D,x)  (x+1)^{(mp-m-1)\alpha+(m-1)}.
    \end{aligned}
\end{equation*}
We now compute the characteristic polynomial of $T^D$. Since $p$ is a prime and $n\geq 2$, $2mp^n-mp-m-1$ is not an eigenvalue of $T^D$. Thus,
\begin{equation*}
    \begin{aligned}
     \phi(T^D,x) &=\det(xI_{\alpha+1}-T^D)\\ &= \left|\begin{array}{cccc}
     x-m+1& -(p-1)m\1_{\alpha}' \\
     -m\1_{\alpha}& (x+mp-m+1)I_{\alpha}-2m(p-1)J_{\alpha}
\end{array}\right|\\
&= \frac{m}{x-(2mp^n-mp-m-1)}\det (T'),
\end{aligned}
\end{equation*}
where
\[T'=\left[ \begin{array}{cccc}
   \displaystyle   \frac{(x-m+1)(x-2mp^n+mp+m+1)}{m}& -m(p-1)\1_{\alpha}' \\
     -(x-2mp^n+mp+m+1)\1_{\alpha}& (x+mp-m+1)I_{\alpha}-2m(p-1)J_{\alpha}
\end{array}\right].
\]
Replacing the first column of $T'$ by the sum of all the columns, the determinant of $T'$ is equal to
\begin{small}
\begin{equation*}
    \begin{aligned}
 \left|\begin{array}{cccc}
\displaystyle     \frac{(x-m+1)(x-2mp^n+mp+m+1)}{m}-m(p^n-1)\hspace{-3 mm}& -m(p-1)\1_{\alpha}' \\
     \0& \hspace{-3 mm}  (x+mp-m-1)I_{\alpha}-2m(p-1)J_{\alpha}
\end{array}\right|. 
\end{aligned}
\end{equation*}
\end{small}
Hence,
\begin{equation*}
    \phi(T^D,x)=
(x^2+(mp+2-2mp^n)x+m^2(p^n-p)-2mp^n+mp+1)(x+mp-m+1)^{\alpha-1}.
\end{equation*}
The proof is complete.
\end{proof}

Setting $m=1$ in Theorem \ref{thm:elabelgen}, we immediately have the distance spectra of the enhanced power graph of elementary abelian groups. This is stated below.

\begin{theorem}
\label{thm:elmntAbelGrp}
Suppose $\alpha=\frac{p^n-1}{p-1}$. Then
\[\phi_D(\mathcal{G}_E(\El(p^n)),x)= (x+p)^{\alpha-1} (x+1)^{(p-2)\alpha}(x^2-(2p^n-p-2)x-(p^n-1)).\]
In particular, the eigenvalues of $\mathcal{P}(\El(p^n))$ are $-1$ with multiplicity $(n-2)\alpha$, $-p$ with multiplicity $\alpha-1$ and two simple eigenvalues $\lambda_{1,2} = \frac{(2p^n-p-2)\pm \sqrt{(2p^n-p-2)^2+4(p^n-1)}}{2}.$
\end{theorem}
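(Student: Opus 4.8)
The plan is to read Theorem~\ref{thm:elmntAbelGrp} off Theorem~\ref{thm:elabelgen} by setting $m=1$, together with the fact that the power graph and the enhanced power graph of $\El(p^n)$ coincide. First I would note that $\El(p^n)=\El(p^n)\times\mathbb{Z}_1$ and $\gcd(p,1)=1$, so Theorem~\ref{thm:elabelgen} applies with $m=1$ (assuming $n\ge 2$; for $n=1$ the graph is simply $K_p$ and the claimed spectrum is immediate, matching the factorization $(x+1)^{p-2}(x^2-(p-2)x-(p-1))=(x+1)^{p-1}(x-(p-1))$ with $\alpha=1$). Putting $m=1$ and simplifying, the exponent $(mp-m-1)\alpha+m-1$ becomes $(p-2)\alpha$, the factor $(x+mp-m+1)^{\alpha-1}$ becomes $(x+p)^{\alpha-1}$, the linear coefficient $mp+2-2mp^n$ becomes $-(2p^n-p-2)$, and the constant term $m^2(p^n-p)-2mp^n+mp+1$ collapses to $-(p^n-1)$. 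This gives
\[
\phi_D(\mathcal{G}_E(\El(p^n)),x)=(x+p)^{\alpha-1}(x+1)^{(p-2)\alpha}\bigl(x^2-(2p^n-p-2)x-(p^n-1)\bigr).
\]

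For the spectral description of $\mathcal{P}(\El(p^n))$, I would first recall that every cyclic subgroup of $\El(p^n)$ has prime power order, so by \cite[Theorem 28]{AalipourcameronEJC} we have $\mathcal{P}(\El(p^n))=\mathcal{G}_E(\El(p^n))$; hence the distance spectrum just found is also the distance spectrum of $\mathcal{P}(\El(p^n))$. Reading the eigenvalues off the factorization then requires checking that the linear factors and the quadratic factor share no root, so that the multiplicities are exactly $\alpha-1$ for $-p$ and $(p-2)\alpha$ for $-1$. Evaluating $x^2-(2p^n-p-2)x-(p^n-1)$ at $x=-1$ gives $p^n-p=p(p^{n-1}-1)\ne 0$ since $n\ge 2$, and at $x=-p$ gives $(2p-1)(p^n-1)\ne 0$; and $-1\ne -p$. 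Finally the discriminant $(2p^n-p-2)^2+4(p^n-1)$ is strictly positive, so the quadratic contributes two distinct simple eigenvalues $\lambda_{1,2}=\tfrac12\bigl((2p^n-p-2)\pm\sqrt{(2p^n-p-2)^2+4(p^n-1)}\bigr)$.

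I do not expect a genuine obstacle here: everything is already contained in Theorem~\ref{thm:elabelgen}, and the remaining work is the symbolic simplification at $m=1$ plus the three non-vanishing checks that pin down the multiplicities. A useful consistency check on the simplification is that the total multiplicity comes out to $(\alpha-1)+(p-2)\alpha+2=(p-1)\alpha+1=p^n=|\El(p^n)|$, exactly the number of vertices.
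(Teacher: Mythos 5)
Your proposal is correct and is exactly the paper's argument: the paper obtains Theorem \ref{thm:elmntAbelGrp} by setting $m=1$ in Theorem \ref{thm:elabelgen}, and your added steps (the identification $\mathcal{P}(\El(p^n))=\mathcal{G}_E(\El(p^n))$ via \cite[Theorem 28]{AalipourcameronEJC} and the non-vanishing checks that pin down the exact multiplicities) are sound refinements the paper leaves implicit. Your total-multiplicity check $(\alpha-1)+(p-2)\alpha+2=p^n$ also confirms that the multiplicity of $-1$ in the statement should read $(p-2)\alpha$ rather than the printed $(n-2)\alpha$.
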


\subsection*{Acknowledgement}
 The first author thanks the Prime Minister's Research Fund for the funding. The second and third authors acknowledge SERB-National Post Doctoral Fellowship (File No. PDF/\allowbreak2021/001899) and (File No. PDF/\allowbreak2022/000269), respectively for the preparation of this work and profusely thank the Science and Engineering Research Board, Govt. of India. The authors also acknowledge ideal working conditions in the Department of Mathematics, Indian Institute of Science, Bangalore.

\bibliography{refs}
{\em Authors' address}:\\
{\em Anita Arora}, Department of Mathematics, IISc Bangalore, Karnataka, India.
 E-mail: \texttt{anitaarora2018@iitkalumni.org}
 \\
\noindent 
 {\em Hiranya Kishore Dey}, Department of Mathematics, IISc Bangalore, Karnataka, India.
 E-mail: \texttt{hiranya.dey@gmail.com}\\
\noindent 
{\em Shivani Goel}, Department of Mathematics, IISc Bangalore, Karnataka, India.
 E-mail: \texttt{shivani.goel.maths@gmail.com} 
\end{document}